\pgfplotsset{compat=1.18}
\theoremstyle{plain}
\newtheorem{thm}{Theorem}[section]
\newtheorem{lem}[thm]{Lemma}
\newtheorem{prop}[thm]{Proposition}
\newtheorem{cor}[thm]{Corollary}
\theoremstyle{definition}
\theoremstyle{remark}
\newtheorem{rem}{Remark}[section]
\newtheorem*{rem*}{Remark}
\newcommand\R{\mathbb{R}}
\newcommand\N{\mathbb{N}}
\newcommand\upd{\textup{d}}
\newcommand\upe{\hspace{1pt}\textup{e}}
\newcommand{\revision}[1]{#1}
\newcommand{\secondrevision}[1]{#1}
\title[A nonlocally pulling patch of bounded size]{Spreading properties of the Fisher--KPP equation when the intrinsic growth rate is maximal in a moving patch of bounded size}
\author{Thomas Giletti}
\address[T. G.]{LMBP, Universit\'{e} Clermont-Auvergne, Campus Universitaire des C\'{e}zeaux, 3~place Vasarely, 63178 Aubi\`{e}re Cedex, France}
\email{thomas.giletti@uca.fr}
\author{L\'{e}o Girardin}
\address[L. G.]{CNRS, Institut Camille Jordan, Universit\'{e} Claude Bernard Lyon-1, 43 boulevard du 11 novembre 1918, 69622 Villeurbanne Cedex, France}
\email{leo.girardin@math.cnrs.fr}
\author{Hiroshi Matano}
\address[H. M.]{Meiji Institute for Advanced Study of Mathematical Sciences, Meiji University, Tokyo \revision{164}-8525, Japan}
\email{matano@meiji.ac.jp}
\begin{document}

\keywords{propagation phenomena, reaction--diffusion, heterogeneous environments}
\subjclass[2010]{35K57, 92D25.}


\begin{abstract}
This paper is concerned with spreading properties of space-time heterogeneous Fisher--KPP equations in one space dimension. We focus on the case of everywhere favorable environment with three different zones, a left half-line with slow or intermediate growth, a central patch with fast growth and a right half-line with slow or intermediate growth. The central patch
moves at various speeds. The behavior of the front changes drastically depending on the speed of the central patch. Among other things, intriguing phenomena such as \textit{nonlocal pulling} and \textit{locking} may occur, which would make the behavior of the front further complicated. The problem we discuss here is closely related to questions in biomathematical modelling. 
By considering several special cases, we illustrate the remarkable diversity of possible behaviors. 
In particular, \revision{when the central patch has constant size and constant speed, we provide a complete set of explicit formulas for the spreading
speed.}
\end{abstract}

\maketitle

\section{Introduction}

This paper is concerned with the Cauchy problem associated with the reaction--diffusion equation
\begin{equation}\label{eq:main}\tag{KPP}
\partial_t u = \partial_{xx} u + f(t,x,u),
\end{equation}
where $t>0$ is a time variable, $x\in\R$ is a one-dimensional space variable, and $u$ is a population density function of time and space. The function~$f$ is a reaction term which is assumed to be globally bounded with respect to the variables $t$ and $x$, and of class~$\mathcal{C}^2$ with
respect to the variable $u$. Moreover, it is of the so-called KPP type with respect to $u$, namely:
\begin{equation}\label{ass:KPP}\tag{A1}
\forall(t,x)\in\R^2 , \quad
\begin{cases}
f(t,x,0)=0, \\
\partial_u f(t,x,0)>0, \\
\forall u\geq 0\quad \partial_u f(t,x,0)u\geq f(t,x,u) \geq \partial_u f(t,x,0) u - M u^2, \\
\forall u>1\quad f(t,x,u)<0,
\end{cases}
\end{equation}
where $M$ is a given positive constant.

Denoting $$r:(t,x)\mapsto \partial_u f(t,x,0),$$ we assume it has the following form:
\begin{equation}\label{ass:piecewise_constant_r}\tag{A2}
r:(t,x)\mapsto
\begin{cases}
r_1 & \text{if }x<A(t), \\
r_2 & \text{if }A(t)\leq x<A(t)+L, \\
r_3 & \text{if }A(t)+L\leq x,
\end{cases}
\end{equation}
with $r_1,r_2,r_3, L$ positive constants and $A\in\mathcal{C}\left(\R,[0,+\infty)\right)$.

The prototypical example of function $f$ we have in mind comes from ecology and has the form $f(t,x,u)=r(t,x) \left( u-u^2/K(t,x) \right)$, where $r$ is an intrinsic growth rate and $K$ a carrying capacity. In order for \eqref{ass:KPP} to be satisfied, $K$ must then satisfy that $0 < \inf K \leq \sup K \leq 1$ (notice that $0<\inf K\leq \sup K < + \infty$ would actually be enough up to some rescaling). The assumption~\eqref{ass:piecewise_constant_r} implies that there is a time-dependent zone $\Omega_0(t)$ of size $L$ where the intrinsic growth rate is $r_2$, while the intrinsic growth rate is $r_1$ (resp. $r_3$) on the left (resp. right) side of $\Omega_0(t)$. This situation is illustrated in Figure~\ref{fig:twot}, though the actual ordering of the values $r_1,r_2,r_3$ may vary throughout this paper.

\begin{figure}
    \centering
    \includegraphics[bb=0.0 0.0 1152.1 527.45, width=.89\linewidth]{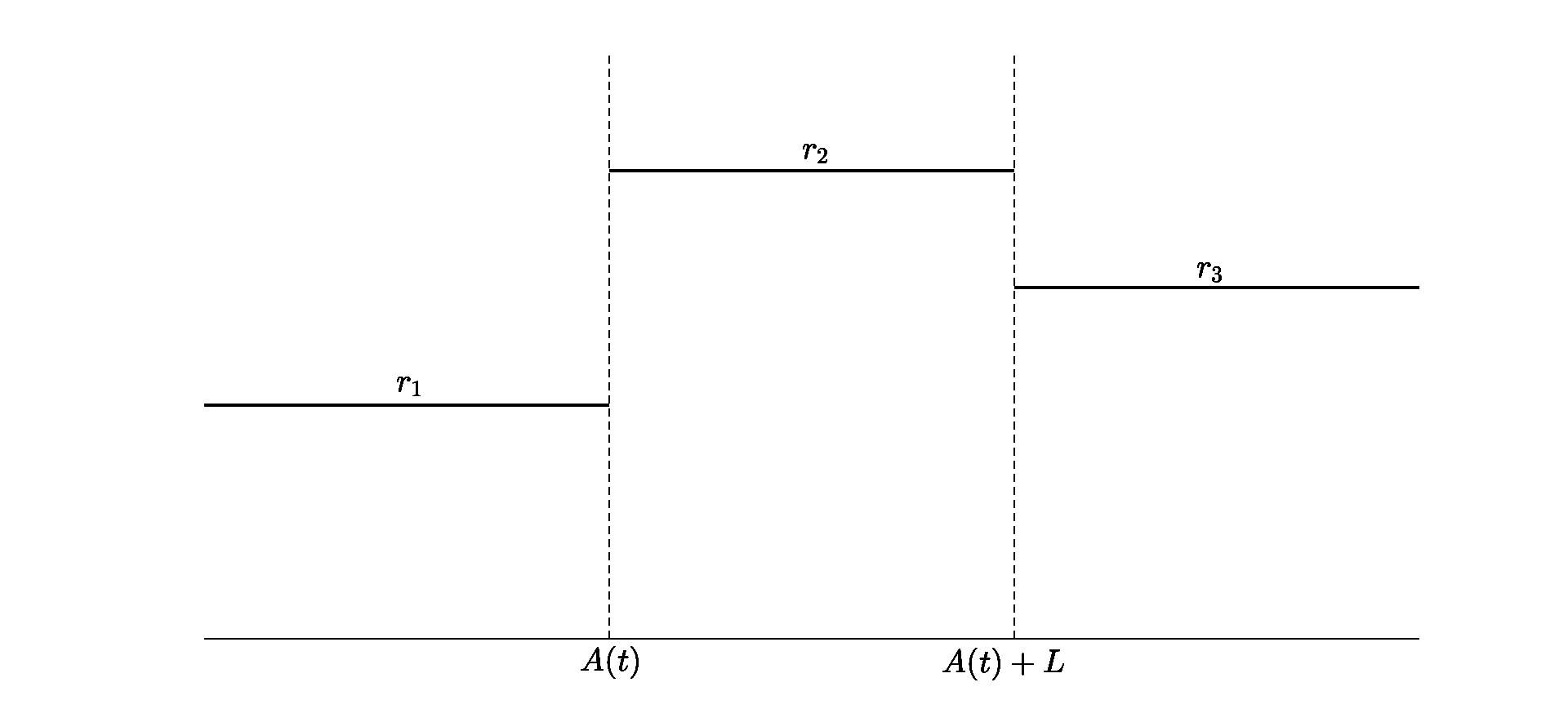}
    \caption{Illustration of the heterogeneous intrinsic growth rate under assumption~\eqref{ass:piecewise_constant_r}.}
    \label{fig:twot}
\end{figure}

The equation \eqref{eq:main} is supplemented in the Cauchy problem with initial data of the form
\begin{equation}\label{ass:initial_data}\tag{A3}
u_0\in\mathcal{C}_c(\R),\quad u_0\geq 0,\quad u_0\neq 0,
\end{equation}
where $\mathcal{C}_c (\R)$ denotes the set of compactly supported continuous functions on~$\R$. We will investigate the large time behavior of solutions, and more specifically their spreading properties. What exactly is meant here by spreading will be clarified below.

\subsection{Some known results} 

Consider the above-mentioned prototypical example $f(t,x,u)=r(t,x)(u-u^2 /K)$ with $K=1$.  When $r_1=r_2=r_3$, the equation \eqref{eq:main} then reduces to the classical Fisher--KPP equation in homogeneous media:
\begin{equation}
\partial_t u = \partial_{xx} u + ru(1-u),
\end{equation}
for which it is well known \cite{Aronson_Weinbe_1} that solutions of the Cauchy problem emanating from compactly supported initial data 
asymptotically spread at speed $2\sqrt{r}$ in the following sense:
\begin{equation}
\begin{array}{rcl}
2\sqrt{r}& = & \sup\left\{ c\geq 0\ |\ \lim_{t\to+\infty}\sup_{|x|\leq ct}|1-u(t,x)|=0 \right\}\\
&=& \inf\left\{ c\geq 0\ |\ \lim_{t\to+\infty}\sup_{ct\leq |x|}u(t,x)=0 \right\}.
\end{array}
\end{equation}

However, much less is known when $r_1$, $r_2$ and $r_3$ differ. Yet heterogeneous growth rates are natural to consider, for at least two reasons. First, in the last few years many studies have been devoted to
the so-called ``climate change problem'' 
\cite{Berestycki_Die, Berestycki_Fang_2018, Bouhours_Gilet, Alfaro_Beresty}, which deals with the situation where the favorable zone moves at a certain speed. Our second motivation comes from the analysis of multi-species models. More precisely, when one considers population models that take the form of reaction-diffusion systems involving multiple species, it is often possible to characterize the spreading speed of the fastest species, but the speeds of the second, third and other slower species generally remain elusive. This is because, although the position of the fronts of slower species lie much behind that of the fastest species, their thin leading edge stretches far beyond the front of the fastest species; therefore it is not immediately clear how the environmental change caused by the spreading front of the fastest species affects the speeds of the slower species. Are the slower species fully adapted to the new environment generated by the front of the fastest species? Or can they somewhat benefit from the environment ahead of the first front despite the increasing distance? 
Quantitative sharp answers are known only in the simplest cases 
\cite{Carrere_2017,Ducrot_Giletti_Matano_2,Girardin_Lam,Holzer_Scheel,Bovier_Hartung_2022} 
and their proofs typically rely on a very specific structure in the system (directions of instability, comparison principle, 
decoupled equation, etc.). 
A better understanding of the heterogeneous scalar equation \eqref{eq:main} should help to obtain more general results. See Section~\ref{sec:multi-species} below where the relation between the equation \eqref{eq:main} for the case $r_2>\max(r_1,r_3)$ and some three-species competition or two-prey-one-predator population models is explained.

Now let us recall some known results on the heterogeneous scalar equation \eqref{eq:main}.  When the heterogeneity remains confined ($A(t)$ being globally bounded), only the asymptotic space-time growth rate matters and determines the spreading speed \cite{Berestycki_Nadin_2012}, which in our case means that the population spreads toward the left at speed $2\sqrt{r_1}$ and toward the right at speed $2\sqrt{r_3}$.

\begin{figure}
    \centering
    \includegraphics[bb=0.0 0.0 1439.82 638.92, width=.89\linewidth]{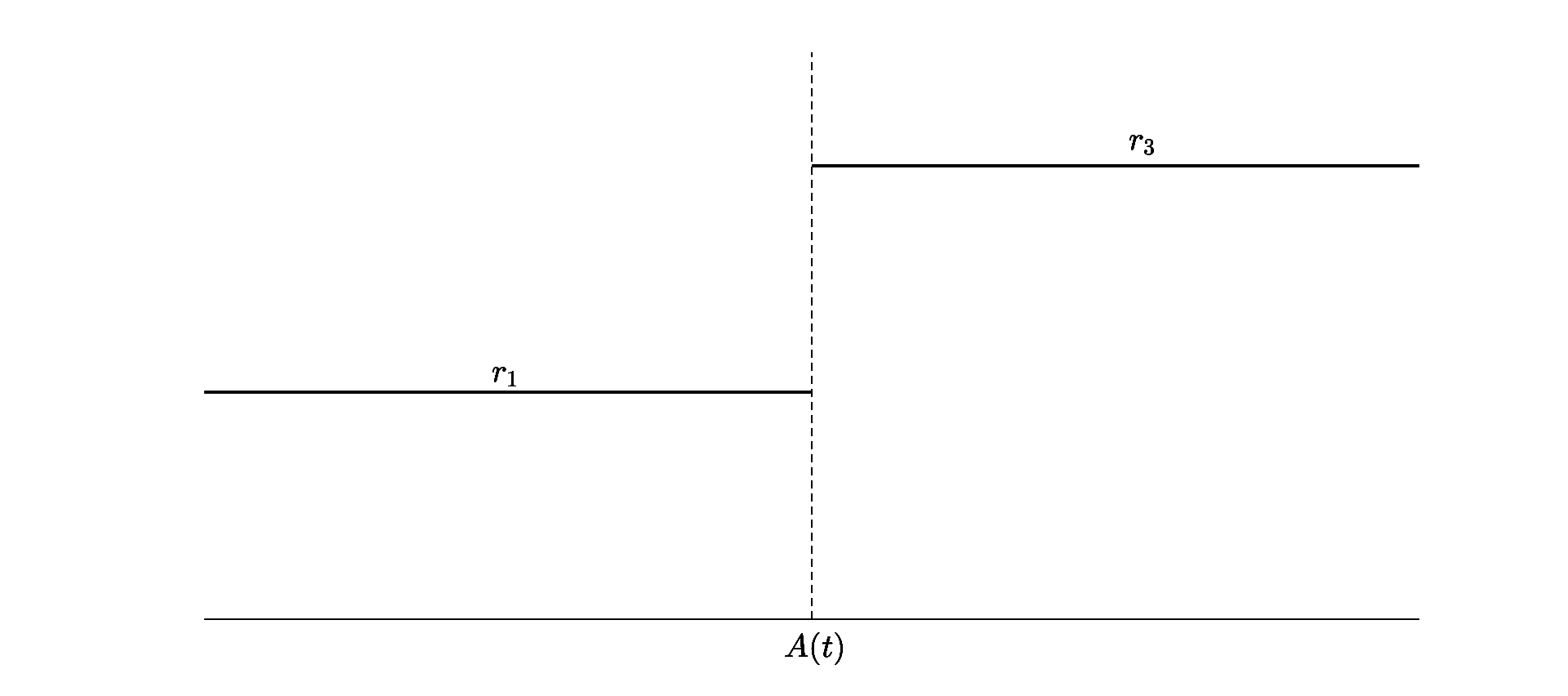}
    \caption{Illustration of the heterogeneous intrinsic growth rate under the assumptions of Theorem~\ref{thm:one_interface}.}
    \label{fig:onet}
\end{figure}

Next, in the special case where $r_1 \neq r_2 = r_3$, only a single transition appears in the intrinsic growth rate; see Figure~\ref{fig:onet}. \revision{If} $A(t)$ moves to the right too fast, then the spreading front is located far to the left of $A(t)$, where the intrinsic growth rate equals $r_1$. Thus \revision{one may speculate that} the spreading speed is likely to be $2\sqrt{r_1}$, as in the homogeneous KPP equation. On the other hand, if $A(t)$ moves too slowly, then the spreading front is located far ahead of $A(t)$, where $r = r_3$. Thus the spreading speed is likely to be \revision{$2\sqrt{r_3} $}. 
These speculations turn out to be correct \revision{in these cases}, but what if the speed of $A(t)$ is neither too fast nor too slow? 

Actually the answer is not so simple \revision{if the speed of $A(t)$ is in the intermediate range: indeed,} totally different types of phenomena, \textit{locking} and \textit{nonlocal pulling}, are 
\revision{known to occur}, depending on whether $r_1 > r_3$ or $r_1 < r_3$. 
\revision{Here, by ``locking'', we mean a situation where the spreading front is trapped around the position of $A(t)$ and travels at the same speed as $A(t)$. This can possibly happen if $r_1>r_3$ and the speed of $A(t)$ is between $2\sqrt{r_1}$ and $2\sqrt{r_3}$. By ``nonlocal pulling'', we mean a situation where $A(t)$ travels faster than the front and lies far ahead of it, yet the larger intrinsic growth rate $r_3$ in the zone $x>A(t)$ exerts a substantial influence on the speed of the front despite the growing distance. This can possibly happen if $r_1<r_3$ and the speed of $A(t)$ is larger than $2\sqrt{r_3}$ but not too large.} 
These two terms, \textit{locking} and \textit{nonlocal pulling},  were first introduced in \cite{Holzer_Scheel} and \cite{Girardin_Lam},  respectively\footnote{In \cite{Holzer_Scheel}, nonlocally pulled fronts were referred to as accelerated fronts. But, in the Fisher--KPP literature, acceleration also refers to superlinear spreading. We believe that nonlocal pulling better highlights the underlying mechanism, which is that a small population in a moving frame strictly faster than the actual front may still contribute to the propagation due to variations in the intrinsic growth rate.}.

In the simplest case when the transition moves with constant speed, \textit{i.e.}, $A(t)$ is a linear function, the known results (that were proved in possibly more complex contexts
in \cite{Girardin_Lam,Holzer_Scheel,Bovier_Hartung_2022,Lam_Yu_2021}) 
can be summarized as follows.

\begin{thm}[A single transition~\cite{Lam_Yu_2021}]\label{thm:one_interface}
Assume $r_1\neq r_2=r_3$ and the existence of $c_A\geq 0$ such that, for all $t\geq 0$, 
$$ A(t)=c_A t.$$
Then the asymptotic spreading speed $c^\star>0$ of $u$ is well-defined as 
\begin{equation}
c^\star=\sup\left\{ c\geq 0\ |\ \liminf_{t\to+\infty}\inf_{0\leq x\leq ct} u(t,x)>0 \right\}=\inf\left\{ c\geq 0\ |\ \lim_{t\to+\infty}\sup_{ct\leq x}u(t,x)=0 \right\}
\end{equation}
and satisfies:
\begin{enumerate}[label=(\alph*)]
\item if $r_1>r_3$:
\begin{enumerate}[label=(\roman*)]
\item $c^\star=2\sqrt{r_3}$ \ if \ $c_A\leq 2\sqrt{r_3}$,
\item $c^\star=c_A$ \ if \ $2\sqrt{r_3}<c_A\leq 2\sqrt{r_1}$ (locked front),
\item $c^\star=2\sqrt{r_1}$ \ if \ $2\sqrt{r_1}<c_A$;
\end{enumerate}
\item if $r_1<r_3$:
\begin{enumerate}[label=(\roman*)]
\item $c^\star=2\sqrt{r_3}$ \ if \ $c_A\leq 2\sqrt{r_3}$,
\item $c^\star=\frac{c_A-2\sqrt{r_3-r_1}}{2}+\frac{2r_1}{c_A-2\sqrt{r_3-r_1}}$ \ if \ $2\sqrt{r_3}<c_A\leq 2\sqrt{r_1}+2\sqrt{r_3-r_1}$ (nonlocally pulled front),
\item $c^\star=2\sqrt{r_1}$ \ if \ $2\sqrt{r_1}+2\sqrt{r_3-r_1}<c_A$.
\end{enumerate}
\end{enumerate}
\end{thm}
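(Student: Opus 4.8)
My plan is to prove the theorem by establishing, for the claimed value $c=c^\star$ in each of the six cases, a lower estimate ``$\liminf_{t\to\infty}\inf_{0\le x\le(c-\varepsilon)t}u>0$'' and an upper estimate ``$\lim_{t\to\infty}\sup_{x\ge(c+\varepsilon)t}u=0$''; since the supremum in the first characterisation of $c^\star$ never exceeds the infimum in the second, this simultaneously gives well-definedness and computes the common value. Two elementary reductions are used throughout. For \emph{upper} estimates, \eqref{ass:KPP} gives $f(t,x,u)\le r(t,x)u$, so it suffices to bound $u$ above by a nonnegative supersolution of the \emph{linear} equation $\partial_tv=\partial_{xx}v+r(t,x)v$, and since $r_2=r_3$ there is effectively a single transition, at $x=c_At$, with $L$ irrelevant. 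For \emph{lower} estimates, \eqref{ass:KPP} gives $f(t,x,u)\ge r(t,x)u-Mu^2\ge r_{\min}u-Mu^2$ with $r_{\min}=\min(r_1,r_3)$, so comparison with the homogeneous KPP equation of reaction $r_{\min}u-Mu^2$ already yields the baseline $c^\star\ge 2\sqrt{r_{\min}}$; the whole difficulty is to raise this baseline by exploiting the \emph{favourable} zone, where $r$ equals $r_{\max}=\max(r_1,r_3)$ — a zone lying \emph{behind} the interface when $r_1>r_3$ (case (a)) and \emph{ahead} of it when $r_1<r_3$ (case (b)), which is the structural reason for the dichotomy.

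For the upper estimates I would use explicit piecewise-exponential supersolutions. When $c_A\le 2\sqrt{r_3}$, passing to the interface frame $\xi=x-c_At$ and comparing $u$ on $\{\xi\ge0\}$ with the exact linear solution $Ke^{-\sqrt{r_3}\,\xi}e^{\sqrt{r_3}(2\sqrt{r_3}-c_A)t}$ (which dominates $u$ on the parabolic boundary for $K$ large) gives $c^\star\le 2\sqrt{r_3}$; in case (a) one moreover has $r\le r_1$ everywhere, so $u\le$ (linear homogeneous $r_1$-solution) and $c^\star\le 2\sqrt{r_1}$ throughout case (a). When $c_A>2\sqrt{r_3}$, the bound $u\le\mathrm{const}\cdot e^{\mu_+\xi}$ on $\{\xi\ge0\}$ with $\mu_+=\tfrac12(-c_A+\sqrt{c_A^2-4r_3})<0$ gives the locked bound $c^\star\le c_A$; and in case (b), combining the profile $e^{-\lambda(x-c^\star t)}$ for $x\le c_At$ (with $\lambda^2-c^\star\lambda+r_1=0$) with a decaying profile $e^{-\beta t}\psi(\xi)$ for $x\ge c_At$ solving $\psi''+c_A\psi'+(r_3+\beta)\psi=0$, and demanding continuity and a concave kink at the interface, produces exactly the supersolution needed for cases (b)(ii)–(iii): the algebra forces $\beta=\tfrac14c_A^2-r_3$ with the double root $\psi\propto e^{-c_A\xi/2}$, the matching $\beta=\lambda(c_A-c^\star)$, and $\lambda\le\sqrt{r_1}$, i.e. the constraint $c_A\le 2\sqrt{r_1}+2\sqrt{r_3-r_1}$ separating (ii) from (iii).

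For the lower estimates, the cases $c_A\le 2\sqrt{r_3}$ and, in case (a), $c_A>2\sqrt{r_1}$ are relatively standard: a compactly supported moving-bump subsolution of the homogeneous KPP equation of reaction $r_3u-Mu^2$ (resp.\ $r_1u-Mu^2$) travelling at speed $2\sqrt{r_3}-\varepsilon$ (resp.\ $2\sqrt{r_1}-\varepsilon$) can be shown, after a finite waiting time, to lie entirely in the zone where that reaction rate is available — beyond the interface when $c_A\le 2\sqrt{r_3}$, behind it when $c_A>2\sqrt{r_1}$ — hence below $u$, after which a routine invasion argument upgrades this to a lower bound on $\inf_{0\le x\le(c-\varepsilon)t}u$; in the locked case (a)(ii) one instead uses that $c_A\le 2\sqrt{r_1}$ lets such a bump keep pace at speed $c_A-\varepsilon$ while staying in $\{x<c_At\}$. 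The genuinely hard case is the nonlocal pulling lower bound of (b)(ii): here the extra speed is produced by an exponentially small ``seed'' located ahead of the actual front — near and beyond the interface, in the $r_3$-zone — that is continuously replenished by what the $r_1$-bulk lets diffuse past $x=c_At$ (bounded below via comparison with the homogeneous KPP$(r_1)$ equation and classical lower bounds on its leading edge) and amplified at rate $r_3$ while carried forward. One must build a subsolution gluing a front-type profile in the $r_1$-zone to this seed in the $r_3$-zone across the moving interface with a \emph{convex} kink (increasing slope) and without degrading the critical exponent $c_A/2$, so that the level set $\{\underline u=\delta\}$ advances at speed $c^\star-\varepsilon$; this step, possibly completed by a continuity/bootstrap argument letting the decay rate range over $[\tfrac12(c_A-2\sqrt{r_3-r_1}),\sqrt{r_1}]$, is where all the difficulty concentrates, and it is precisely the mechanism analysed in the more general settings of \cite{Girardin_Lam,Holzer_Scheel,Bovier_Hartung_2022,Lam_Yu_2021}, from which the theorem follows.

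Finally, I would point out that all the formulas — in particular the pulling speed $\tfrac12(c_A-2\sqrt{r_3-r_1})+2r_1/(c_A-2\sqrt{r_3-r_1})$ and the thresholds $2\sqrt{r_3}$, $2\sqrt{r_1}+2\sqrt{r_3-r_1}$ — drop out transparently from the large-deviation / Hamilton--Jacobi limit: since $A(t)=c_At$ is linear, the coefficient $R(t,x)=r_1\mathbf{1}_{x<c_At}+r_3\mathbf{1}_{x>c_At}$ is invariant under the hyperbolic rescaling $(t,x)\mapsto(\kappa t,\kappa x)$, the front is the zero level set of the associated viscosity solution, and $c^\star$ is the largest $v\ge0$ for which the minimal Freidlin--Wentzell cost $\inf\{\int_0^1(\tfrac14|\dot\gamma(s)|^2-R(s,\gamma(s)))\,\mathrm{d}s:\gamma(0)=0,\ \gamma(1)=v\}$ vanishes; one checks directly that, in regime (b)(ii), the optimal path is a broken line that first rides the interface at speed $c_A$ inside the $r_3$-zone for a fraction $\theta=(c^\star-2\lambda^\star)/(2\sqrt{r_3-r_1})$ of the time and then proceeds at speed $c_A-2\sqrt{r_3-r_1}=2\lambda^\star$ inside the $r_1$-zone (a Snell-type refraction), with total cost $\theta(\tfrac14c_A^2-r_3)+(1-\theta)((\lambda^\star)^2-r_1)$, which vanishes exactly when $v=c^\star$. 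Making this limit rigorous for the present piecewise-constant, Lipschitz-interface coefficient is a (classical) task in itself, but gives an independent route to the result.
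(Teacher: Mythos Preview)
The paper does not prove Theorem~\ref{thm:one_interface}: it is quoted as a known result from \cite{Lam_Yu_2021} (with related contributions in \cite{Girardin_Lam,Holzer_Scheel,Bovier_Hartung_2022}) and is in fact \emph{used} as a black box in Step~3 of the proof of the paper's main result, Theorem~\ref{thm:constant_case}. So there is no proof here against which to compare yours.

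That said, your sketch is very much in the spirit of the super-/sub-solution machinery the paper develops for Theorem~\ref{thm:constant_case}. Your supersolution for (b)(ii)--(iii) --- the profile $e^{-\lambda(x-c^\star t)}$ behind the interface glued to $e^{-\beta t}e^{-c_A\xi/2}$ ahead --- is the single-transition analogue of the construction \eqref{super_step2} in Step~2 of Section~\ref{sec:proof_main}; your matching and angle conditions are \eqref{cA-first}--\eqref{new_cond} specialised to $\lambda_1=-r_3$. For the lower bounds you correctly locate the real difficulty in the nonlocal-pulling case (b)(ii) and then essentially defer it to the cited references --- which is also what the paper does; if you wanted to close it yourself, the sub-solution \eqref{subsolstep4} of Step~4 (with the truncated Dirichlet eigenfunction playing the role of your ``seed'') gives a template. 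Your Hamilton--Jacobi paragraph is an accurate summary of the method of \cite{Lam_Yu_2021}; note however the paper's explicit remark that this hyperbolic-rescaling approach is precisely what \emph{fails} to detect nonlocal pulling from a patch of bounded width, which is why the paper must bypass it for its own theorem.
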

Since these six cases form the basis for our intuition, let us describe briefly what happens in
each case. The statement is also summarized in Figure \ref{fig:no_patch_with_constant_speed}.

Cases $(a)(i)$ and $(b)(i)$ correspond to a situation outlined above where the transition $A(t)$ moves so slow that the population moves ahead of it at speed $2\sqrt{r_3}$. In cases $(a)(iii)$ and $(b)(iii)$, the transition is so fast that the spreading speed $2 \sqrt{r_1}$ is determined only by the left side zone. 
\revision{More intriguing behaviors are observed in the intermediate cases $(a)(ii)$ and $(b)(ii)$. Case $(a)(ii)$} is the \textit{locking case}, where the population spreads exactly at the \revision{same speed $c_A$ as the transition point $A(t)$}.
Finally, case $(b)(ii)$ is the \textit{nonlocal pulling case}, 
\revision{where $A(t)$ travels faster than the solution front, therefore the intrinsic growth rate is $r_1$ in a vast area around the front, yet the speed $c^\star$ of the front is strictly larger than $2\sqrt{r_1}$. This implies that the larger intrinsic growth rate $r_3$ in the zone $x>A(t)$ exerts a substantial influence on the speed of the front despite the growing distance between $A(t)$ and the front. 
}

\begin{figure}
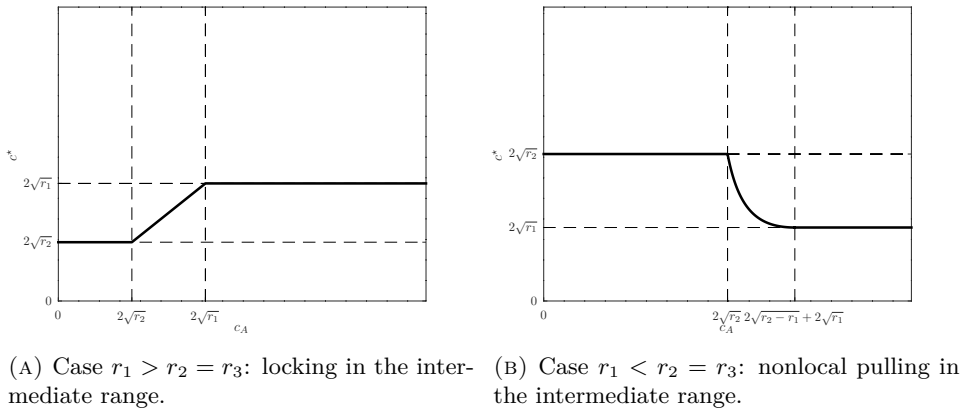

    \begin{subfigure}{.49\linewidth}
        \resizebox{\linewidth}{!}{\input{speed_r1=4_r2=1.tex}}
        \caption{Case $r_1>r_2 = r_3$: locking in the intermediate range.}
    \end{subfigure}
    \hfill
    \begin{subfigure}{.49\linewidth}
        \resizebox{\linewidth}{!}{\input{speed_r1=1_r2=4.tex}}
        \caption{Case $r_1<r_2 =r_3$: nonlocal pulling in the intermediate range.}
    \end{subfigure}
    
    \caption{The spreading speed as a function of the environmental speed under
    the assumptions of Theorem \ref{thm:one_interface}.}
    \label{fig:no_patch_with_constant_speed}
\end{figure}

Note that the persistence behind the propagating front is described as 
\begin{equation}
    \liminf_{t \to +\infty}\inf_{0\leq x\leq ct}u(t,x)>0.
\end{equation}
This generalizes appropriately the classical picture
\begin{equation}
    \lim_{t\to+\infty}\sup_{|x|\leq ct}|1-u(t,x)|=0
\end{equation}
to our more general setting where the local behavior of the solution is typically not a convergence to a constant
steady state, which may not exist here, but rather complicated oscillations.

The more general case (more than one unconfined interface) remained elusive until very recently. In 2021, Lam and Yu 
\cite{Lam_Yu_2021} managed to characterize the spreading speed in many cases thanks to a Hamilton--Jacobi framework. 
Although the formula is in general complicated and implicit, in some cases it can be made explicit. In particular, 
in the case of two interfaces $A(t)$ and $A(t)+L$ with constant successive values 
$r_1$, $r_2$ and $r_3$, all subcases can be solved provided the central patch
does not have a positive effect on the growth rate from both sides (\textit{i.e.}, provided $r_2<\max(r_1,r_3)$).
This assumption is not a technical issue but rather a true limitation of the Hamilton--Jacobi approach, which involves a hyperbolic rescaling. Basically, this approach cannot deal with nonlocal pulling exerted from areas that are too small. This is why we will use a different approach here, based on super- and sub-solutions, to study this more delicate type of nonlocal pulling
\footnote{\revision{After the submission of our manuscript, Lam, Nadin and Yu published a paper \cite{Lam_Yu_Nadin_2025}, which refines the Hamilton--Jacobi approach with flux-limited solutions, thereby making it possible to deal with the case $r_2>\max(r_1,r_3)$. Their results are consistent with ours.}}

To conclude this general introduction, we point out a very recent work by Bovier and Hartung \cite{Bovier_Hartung_2022} where 
a probabilistic method was used to quantify rigorously the nonlocal pulling in a model for the spread of a trait in a population, originally introduced in \cite{Venegas_Ortiz_}. 
It is not clear to us whether this third method of proof could be used to deal with the case we consider here.

\subsection{Organization of the paper}
In Section 2 we present our main results, which \revision{focus on} the \revision{case} when the intrinsic growth rate is larger in the intermediate zone between the two transitions.

The proofs of our results are \revision{given} in Section~3, using \revision{an elaborate} construction of sub- and super-solutions \revision{involving the principal eigenvalue of} an eigenvalue problem in the \revision{moving frame}. In Section 4 we \revision{summarize} a few interesting properties \revision{of this principal} eigenvalue and discuss possible extensions of our work.

\section{Main results}\label{s:main}

We now consider the equation~\eqref{eq:main}, supplemented with the assumptions~\eqref{ass:KPP}, \eqref{ass:piecewise_constant_r}, \eqref{ass:initial_data}. Throughout the rest of this paper, we assume that
\begin{equation}\label{r2>r1r3}
r_2>\max(r_1,r_3).
\end{equation}
This means that the intrinsic growth rate is at its highest in the central patch of positive and bounded length between the two moving transitions. As we explained in the previous section, such a situation has so far remained unaddressed in the mathematical literature, partly due (among other reasons) to the difficulty of estimating the aforementioned nonlocal pulling effects for this case.

From now on, we only consider the rightward spreading properties. 
Since $0\leq A(t)\leq A(t)+L$, it is indeed standard to prove that the leftward
spreading speed is well-defined and equals $2\sqrt{r_1}$.

Before stating the results, we define two convenient quantities, the \textit{minimal spreading speed} $\underline{c}$
and the \textit{maximal spreading speed} $\overline{c}$:
\begin{equation}
\underline{c}=\sup\left\{ c\geq 0\ |\ \liminf_{t\to+\infty}\inf_{0\leq x\leq ct} u(t,x)>0 \right\},
\end{equation}
\begin{equation}
\overline{c}=\inf\left\{ c\geq 0\ |\ \lim_{t\to+\infty}\sup_{ct\leq x}u(t,x)=0 \right\}.
\end{equation}
These two quantities are well-defined since the initial value $u_0$ satisfies \eqref{ass:initial_data}. By comparison with solutions of homogeneous problems, one may verify that
\begin{equation}
    2\sqrt{\min(r_1,r_3)}\leq\underline{c}\leq\overline{c}\leq 2\sqrt{r_2}.
\end{equation}
The equality between $\underline{c}$ and $\overline{c}$ is a very natural question. For more general initial data $u_0$,
it is known that the equality can fail even if $r_1=r_2=r_3$ \cite{Hamel_Nadin_2012}. When there is indeed equality between $\underline{c}$ 
and $\overline{c}$, the quantity $\underline{c}=\overline{c}$ is referred to as the \textit{spreading speed}; 
in other words, the spreading speed is well-defined if, and only if, $\underline{c}=\overline{c}$.
Our results will in particular assert that, when $r_2>\max(r_1,r_3)$ and~$u_0$ is compactly supported, the
spreading speed can be well-defined or ill-defined, depending on the variations of $A$.
In fact it will become clear that the range of possible outcomes is very wide, \revision{which makes it difficult to capture the entire picture.} 
Therefore, \revision{in the present paper we will focus on solving} extreme cases and will illustrate the \revision{remarkable} variety of outcomes.

We begin with the case where $A$ is a linear function of time. This is our main result.

\begin{thm}[Patch of constant size, constant speed]\label{thm:constant_case}
Assume the existence of $c_A >0$ such that, for all $t\geq 0$, 
$$A(t)=c_At . $$
Let 
\begin{equation}
\underline{L}=
\begin{cases}
0 & \text{if }r_1=r_3, \\
\frac{1}{\sqrt{r_2-\max(r_1,r_3)}}\operatorname{arccot}\left(\sqrt{\frac{r_2-\max(r_1,r_3)}{|r_1-r_3|}}\right) & \text{if }r_1\neq r_3,
\end{cases}
\end{equation}
where $\operatorname{arccot}$ denotes the inverse of $\cot_{|(0,\pi)}$. Then:
\begin{equation}\label{eq:speed_formula}
\underline{c}=\overline{c}=
\begin{cases}
2\sqrt{r_3} & \text{if }c_A<2\sqrt{r_3}, \\
c_A & \text{if }2\sqrt{r_3}\leq c_A\leq 2\sqrt{-\lambda_1}\quad\text{(locked front)}, \\
F(c_A) & \text{if }2\sqrt{-\lambda_1}<c_A<2\sqrt{r_1}+2\sqrt{-\lambda_1-r_1}\\
& \quad\text{(nonlocally pulled front)}, \\
2\sqrt{r_1} & \text{if } 2\sqrt{r_1}+2\sqrt{-\lambda_1-r_1}\leq c_A,
\end{cases}
\end{equation}
where
\begin{itemize}
\item $\lambda_1 = - \max (r_1,r_3)$ if $L \leq \underline{L}$;
\item $\lambda_1\in(-r_2,-\max(r_1,r_3))$ if $L>\underline{L}$, and in this case $\lambda_1$
is characterized as the unique solution in $\left(-r_2,\min\left(-\max\left(r_1,r_3\right),\frac{\pi^2}{L^2}-r_2\right)\right)$ of:
\begin{equation}\label{eq:equation_defining_lambda1}
\cot(L\sqrt{r_2+\lambda_1})=\frac{r_2+\lambda_1-\sqrt{(r_1+\lambda_1)(r_3+\lambda_1)}}{\sqrt{r_2+\lambda_1}(\sqrt{-r_1-\lambda_1}+\sqrt{-r_3-\lambda_1})};
\end{equation}
\item the function $F$ is defined by:
\begin{equation}\label{eq:def_F}
    F:c\in\left(2\sqrt{-\lambda_1-r_1},+\infty\right)\mapsto \frac{c-2\sqrt{-\lambda_1-r_1}}{2}+\frac{2r_1}{c-2\sqrt{-\lambda_1-r_1}};
\end{equation}
\end{itemize}
\end{thm}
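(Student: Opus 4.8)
The plan is to prove the two inequalities $\overline c\leq c^\star(c_A)$ and $\underline c\geq c^\star(c_A)$ separately, where $c^\star(c_A)$ is the right-hand side of~\eqref{eq:speed_formula}; together with the a priori bounds $2\sqrt{\min(r_1,r_3)}\leq\underline c\leq\overline c\leq2\sqrt{r_2}$ recalled above, this settles the theorem. First I would pass to the comoving frame $\xi=x-c_At$, in which $r$ becomes the stationary profile $r_1$ on $\xi<0$, $r_2$ on $[0,L)$, $r_3$ on $[L,\infty)$; after the conjugation $\phi\mapsto\mathrm{e}^{c_A\xi/2}\phi$ the operator $\phi\mapsto\phi''+c_A\phi'+r(\xi)\phi$ is a Schr\"odinger operator whose generalized principal eigenvalue equals $\mu:=-\lambda_1-\tfrac14c_A^2$, with $\lambda_1$ the generalized principal eigenvalue of $\phi\mapsto\phi''+r(\xi)\phi$ on $\R$. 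Solving the latter by matching the exponential/trigonometric/exponential pieces of the non-negative eigenfunction $\phi$ produces the characterization~\eqref{eq:equation_defining_lambda1} of $\lambda_1$ and the threshold $\underline L$ for $\lambda_1$ to be a genuine, exponentially decaying eigenvalue rather than the bottom $-\max(r_1,r_3)$ of the essential spectrum. I would also record, with $\beta:=\tfrac12c_A-\sqrt{-\lambda_1-r_1}$ (the rate at which $\psi:=\mathrm{e}^{-c_A\xi/2}\phi$ decays into the left zone), the elementary identities $\mu+\beta c_A=\beta^2+r_1$, $\mu=-\beta\,(c_A-F(c_A))$, $F(2\sqrt{-\lambda_1})=2\sqrt{-\lambda_1}$, $F\geq2\sqrt{r_1}$ with equality exactly at the right end of the pulling interval, and $F(c_A)<c_A$ on that interval; and I would note that the principal Dirichlet eigenvalue $\mu_R$ on $(-R,R)$ of the comoving operator with potential $r(\xi)$, or $\mu_R^\rho$ with the dampened potential $r(\xi)-M\rho$, increases to $\mu$, resp. to $\mu-M\rho$, as $R\to\infty$, with eigenfunctions $\psi_R$, $\psi_R^\rho$.

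For the upper bound I would exhibit explicit super-solutions. When $c_A<2\sqrt{r_3}$, the function $\min\{K,\,K\mathrm{e}^{\sqrt{r_3}(2\sqrt{r_3}t+C-x)}\}$ with $K>1$ and $C$ large is a super-solution of~\eqref{eq:main} — the exponential piece only has to be a super-solution on the set $\{x>2\sqrt{r_3}t+C\}$ where it lies below $K$, and there $x>A(t)+L$, so $r=r_3$ — hence $\overline c\leq2\sqrt{r_3}$; when $c_A\geq2\sqrt{r_3}$ the same scheme with $\min\{K,\,K\mathrm{e}^{\nu(A(t)+L+C-x)}\}$, $\nu$ a root of $\nu^2-c_A\nu+r_3=0$, gives $\overline c\leq c_A$ (tight in the locked range). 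When $c_A>2\sqrt{-\lambda_1}$, so $\mu<0$, linear comparison in the comoving frame with the principal eigenfunction gives $u\leq C\mathrm{e}^{\mu t}\psi(x-c_At)$ on $\R\times(0,\infty)$ (legitimate since $f\leq ru$ and $u_0$ is compactly supported), so $u\to0$ uniformly for $x\geq A(t)$; and on $\{x<A(t)\}$, where $r\equiv r_1$, the function $C\mathrm{e}^{(\alpha^2+r_1)t-\alpha x}$ with $\alpha:=\min(\sqrt{r_1},\beta)$ is a super-solution dominating $u$ on the parabolic boundary — because $\alpha\leq\beta$ forces $\alpha^2+r_1-\alpha c_A\geq\mu$, so it beats $C\mathrm{e}^{\mu t}\psi(0)$ at $x=A(t)$, and it beats $u_0$ at $t=0$ for $C$ large — and since $\alpha+r_1/\alpha=\min(F(c_A),2\sqrt{r_1})$ equals $F(c_A)$ on the pulling interval and $2\sqrt{r_1}$ beyond it, this yields the claimed bound.

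For the lower bound, the two extreme cases are routine. When $c_A<2\sqrt{r_3}$ the solution, positive everywhere after time $1$, spreads both ways from a point chosen deep inside the moving $r_3$-zone — forward at speed $2\sqrt{r_3}$, which outruns $A(t)+L$, and backward (through the favourable patch and left zone) at positive speed down to $x=0$ — so $\underline c\geq2\sqrt{r_3}$; when $c_A\geq2\sqrt{r_1}+2\sqrt{-\lambda_1-r_1}$, the solution restricted to the left zone $\{x<A(t)\}$, where $r\equiv r_1$, is a super-solution of the homogeneous KPP equation with rate $r_1$ with non-negative boundary values at $x=A(t)$, and since $2\sqrt{r_1}<c_A$ a bump sub-solution of that equation stays inside the left zone, giving $\underline c\geq2\sqrt{r_1}$. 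In the locked range $2\sqrt{r_3}\leq c_A<2\sqrt{-\lambda_1}$ I would use that, for $R$ large and $\delta$ small, the profile $\delta\psi_R(x-c_At)$ (extended by $0$ outside the window of width $2R$ travelling with the patch) is a sub-solution of~\eqref{eq:main}: the extension creates only favourable convex corners, and the computation reduces to $\mu_R-M\delta\psi_R\geq0$, which holds for $R$ large (so that $\mu_R>0$, using $c_A<2\sqrt{-\lambda_1}$) and $\delta$ small; comparison bounds $u$ below on that window, and a standard persistence argument in the left zone (where $r\equiv r_1>0$, so a positive lower bound at one point invades at speed $2\sqrt{r_1}$) upgrades this to $\liminf_t\inf_{0\leq x\leq ct}u>0$ for every $c<c_A$. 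The endpoint $c_A=2\sqrt{-\lambda_1}$ follows from the pulling analysis below by letting $\rho\to0$.

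The core of the proof is the pulling range $2\sqrt{-\lambda_1}<c_A<2\sqrt{r_1}+2\sqrt{-\lambda_1-r_1}$, where $\mu<0$, so the population carried by the patch decays in time yet still drives a front at $F(c_A)>2\sqrt{r_1}$. The idea I would follow is to dampen the reaction: since $f(t,x,u)\geq(r(t,x)-M\rho)u$ for $0\leq u\leq\rho$, any non-negative function $\leq\rho$ solving the $(r-M\rho)$-linear equation is a sub-solution of~\eqref{eq:main}, and $\rho\,\mathrm{e}^{\mu_R^\rho t}\psi_R^\rho(x-c_At)$ (with $\|\psi_R^\rho\|_\infty=1$, extended by $0$) is such a function, precisely because $\mu_R^\rho<0$ keeps it $\leq\rho$ for all $t\geq0$, hence below $u(1,\cdot)$ on the window once $\rho$ is small. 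Comparison then gives $u(t,x)\geq\rho\,\mathrm{e}^{\mu_R^\rho t}\psi_R^\rho(x-c_At)$ for $t\geq1$, in particular a lower bound $u(t,A(t)-R/2)\geq\varepsilon_R\,\mathrm{e}^{\mu_R^\rho t}$ just behind the patch. Feeding this moving exponential source into the left zone, where $r\equiv r_1$ and where — as long as $u$ stays below $\rho$ in the leading edge — $u$ is a super-solution of the $(r_1-M\rho)$-linear equation, a sharp lower bound for that linear equation forces $u\gtrsim\mathrm{e}^{(\beta^2+r_1-M\rho)t-\beta x}$ on a parabolic region; by the definitions of $\beta$ and $\mu_R^\rho$ this reaches the level $\rho$ along $x\sim F_\rho(c_A)t$ with $F_\rho(c_A)=\beta+(r_1-M\rho)/\beta\to F(c_A)$, so a level-set/connectedness argument (using that $u<\rho$ only to the right of where it attains $\rho$) forces $u$ to attain $\rho$ at $x\geq(F_\rho(c_A)-\epsilon)t$ for $t$ large; persistence then gives $\liminf_t\inf_{0\leq x\leq ct}u>0$ for every $c<F_\rho(c_A)-\epsilon$, and letting $\rho,\epsilon\to0$ yields $\underline c\geq F(c_A)$. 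I expect the main obstacle to be exactly this step: proving the quantitative lower bound for the dampened linear equation with the special rate $\beta$ — this is where the precise value of $\lambda_1$ enters, and what ensures the enhanced speed $F(c_A)$ rather than a slower one — and coupling it with the a priori control $u\leq\rho$ that makes the dampened comparison licit; everything else (the eigenvalue computations, the super-solution verifications, the persistence arguments) is standard.
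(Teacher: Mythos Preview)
Your overall architecture—reduce everything to the generalized principal eigenvalue $\lambda_1$ of the stationary Schr\"odinger operator in the comoving frame and then build super- and sub-solutions case by case—is exactly the paper's, and your treatment of the upper bound and of the lower bound in the slow, fast, and locked ranges matches the paper's Steps~1--3 (you separate into two successive comparisons what the paper packages as the single glued super-solution~\eqref{super_step2}, and you use a direct bump sub-solution where the paper appeals to Theorem~\ref{thm:one_interface}, but the content is the same).

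The real divergence is in the nonlocal pulling range. The paper does \emph{not} argue via a two-stage ``feed the exponential source into the left zone'' comparison followed by a level-set argument. Instead it constructs a single explicit sub-solution~\eqref{subsolstep4} made of five pieces: a truncated sine, a plateau, an exponential difference $P(t,x)=\mathrm{e}^{-\lambda(c)(x-ct-x_0)}-S\mathrm{e}^{-(\lambda(c)+\eta)(x-ct-x_0)}$ travelling at speed $c$, glued at a moving interface $X(t)\in(c_At-RL,c_At)$ to the truncated Dirichlet eigenfunction piece $Q(t,x)=\gamma\,\mathrm{e}^{-\lambda(c)(c_A-c)t}\mathrm{e}^{-c_A(x-c_At)/2}\varphi_1^R((x-c_At)/L)$. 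The technical core is Lemma~\ref{lem:interface_sub-solution}, which shows $X(t)$ is well defined, stays in the window, and that the derivative gap there is favourable; the sub-solution inequality in the $P$-region reduces to a sign condition that holds for $S$ large and $\eta$ small, and in the $Q$-region to $\lambda_1-\lambda(c)(c_A-c)+c_A^2/4<0$, which is exactly $c<F(c_A)$.

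Your alternative is legitimate in spirit, but the step you flag as the main obstacle is a genuine one and hides more than you indicate. The dampened bound $u\geq\rho\,\mathrm{e}^{\mu_R^\rho t}\psi_R^\rho(x-c_At)$ is fine, but the ``feeding'' step requires $u\leq\rho$ on the whole region where you apply the $(r_1-M\rho)$-linear comparison, namely $\{X_\rho(t)<x<c_At\}$ with $X_\rho(t)=\sup\{x:u(t,x)\geq\rho\}$; the pure exponential $\mathrm{e}^{(\beta^2+r_1-M\rho)t-\beta x}$ neither matches the source at $x\approx c_At$ without equality in the exponents (hence no slack) nor stays below $\rho$ at $x=X_\rho(t)$ without a truncation that essentially rebuilds $P$. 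Moreover, deducing $\underline c\geq c$ from ``$u$ attains $\rho$ at $x\geq ct$'' when $c>2\sqrt{r_1}$ is nontrivial: a bump launched at $X_\rho(t)$ only spreads backward at $2\sqrt{r_1}$, so you must iterate the source argument at a sequence of times to fill in $[0,ct]$—which is precisely what the single compound sub-solution accomplishes in one stroke. None of this is fatal, but carrying it out rigorously reproduces most of the paper's construction anyway.
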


The intervals $(2\sqrt{r_3},2\sqrt{-\lambda_1})$ and $(2\sqrt{-\lambda_1},2\sqrt{r_1}+2\sqrt{-\lambda_1-r_1})$ correspond to a locking situation and a nonlocally pulled situation, respectively. As is easily seen, the former interval is empty if and only if $L\leq \underline{L}$ and $r_3 > r_1$, while the latter is empty if and only if $L\leq \underline{L}$ and $r_3 < r_1$. Therefore the two intervals cannot be both empty. 
There are five possible cases, which are illustrated on Figure \ref{fig:constant_patch_with_constant_speed}.

\begin{figure}
    \begin{subfigure}{.49\linewidth}
        \resizebox{\linewidth}{!}{\input{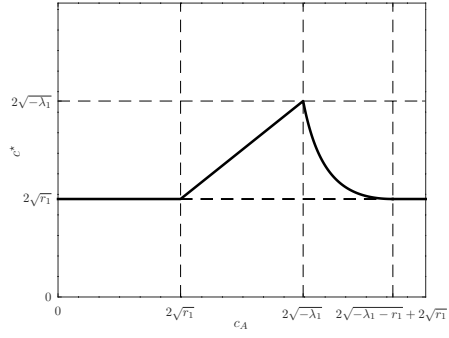}}
        \caption{Case $r_1=r_3$, $L>\underline{L}=0$ (parameter values: $r_1=1$, $r_2=9$, $r_3=1$, $\lambda_1=-4$)}
    \end{subfigure}
    
    \begin{subfigure}{.49\linewidth}
        \resizebox{\linewidth}{!}{\input{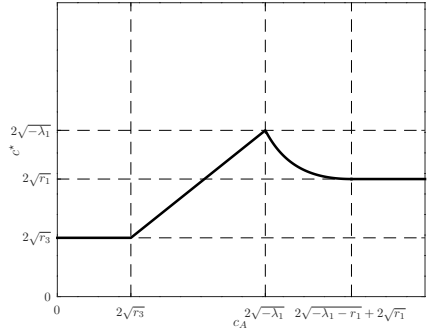}}
        \caption{Case $r_1>r_3$, $L>\underline{L}$ (parameter values: $r_1=4$, $r_2=9$, $r_3=1$, $\lambda_1=-8$)}
    \end{subfigure}
    \hfill
    \begin{subfigure}{.49\linewidth}
        \resizebox{\linewidth}{!}{\input{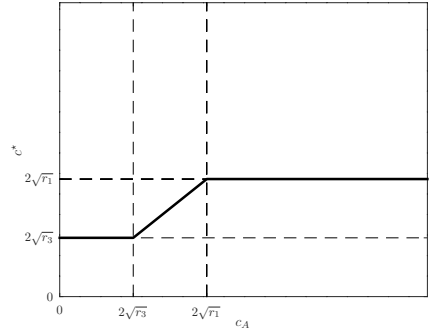}}
        \caption{Case $r_1>r_3$, $L\leq\underline{L}$ (parameter values: $r_1=4$, $r_2=9$, $r_3=1$, $\lambda_1=-4$)}
    \end{subfigure}
    
    \begin{subfigure}{.49\linewidth}
        \resizebox{\linewidth}{!}{\input{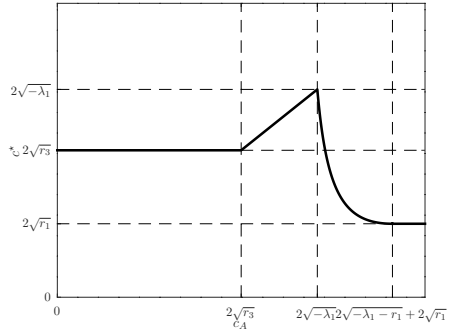}}
        \caption{Case $r_1<r_3$, $L>\underline{L}$ (parameter values: $r_1=1$, $r_2=9$, $r_3=4$, $\lambda_1=-8$)}
    \end{subfigure}
    \hfill
    \begin{subfigure}{.49\linewidth}
        \resizebox{\linewidth}{!}{\input{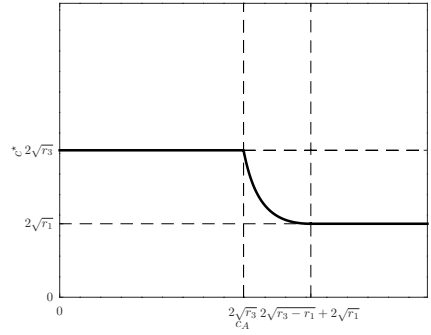}}
        \caption{Case $r_1<r_3$, $L\leq\underline{L}$ (parameter values: $r_1=1$, $r_2=9$, $r_3=4$, $\lambda_1=-4$)}
    \end{subfigure}
    
    \caption{The spreading speed $c^\star$ of Theorem \ref{thm:constant_case} as a function of the speed of the environmental 
    heterogeneity $c_A$. Note that we fix values of $\lambda_1$ instead of fixing values of $L$. 
    This is rigorously equivalent, \textit{cf.} Proposition \ref{prop:lambda1_L0}. Note also that the figures where
    $L\leq\underline{L}$, \textit{i.e.}, $\lambda_1=-\max(r_1,r_3)$, are independent of $L$ and $\lambda_1$.}
    \label{fig:constant_patch_with_constant_speed}
\end{figure}

We remark in particular that if $r_1=r_3$, then $\underline{L}=0<L$, therefore the above two intervals are both non-empty. Consequently, the patch can induce locking or nonlocal pulling, depending on the value of the
speed $c_A$, even if the patch size~$L$ is arbitrarily small. This shows that even a small perturbation of the homogeneous KPP equation may have a substantial impact on the large time behavior of solutions by substantially altering the spreading speed.

As shown in Lemma \ref{lem:eigen}, the quantity $\lambda_1$ \revision{is} the generalized principal eigenvalue of the problem 
\eqref{eq:eigenproblem1}. Its properties,
as a function of the parameters $L$ or $r_2$, will be studied in Section \ref{sec:properties_lambda1}. It is directly related to the leading eigenvalue $\lambda$ in \cite{Holzer_Scheel}, as illustrated by the correspondence between Figure \ref{fig:constant_patch_with_constant_speed_lambda1} and \cite[Figure 3-left]{Holzer_Scheel}. In particular, one may replace~\eqref{ass:piecewise_constant_r} by the more general assumption that $x\mapsto r (t,x- A(t))$ is a compact perturbation of the Heaviside type function 
\begin{equation}
\phi : x \mapsto
\begin{cases}
r_1 & \text{if }x<0, \\
r_3 & \text{if } 0 \leq x.
\end{cases}
\end{equation}
Then, when $A(t) = c_A t$, one may redefine the principal eigenvalue~$\lambda_1$ appropriately in this context and recover the formula~\eqref{eq:speed_formula} for the spreading speed. We refer to Section~\ref{sec:moregeneral} for more details.
\begin{figure}
    \resizebox{.8\linewidth}{!}{\input{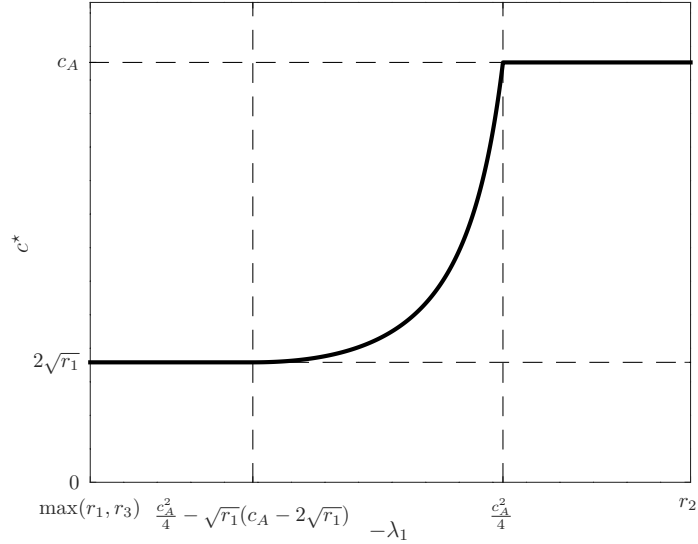}}
    \caption{The spreading speed $c^\star$ of Theorem \ref{thm:constant_case} as a function of the generalized principal eigenvalue $\lambda_1$ in the case $c_A>2\sqrt{\max(r_1,r_3)}$ (parameter values: $r_1=1$, $r_2=16$, $r_3=4$, $c_A=7$). Due to the monotonic dependence of $\lambda_1$ on $r_2$ or $L$ (the larger the patch, the larger $-\lambda_1$; 
    \textit{cf.} Propositions~\ref{prop:lambda1_L0} and \ref{prop:lambda1_r2}), this graph is a proxy for $c^\star$ as a function of $L$ or $r_2$. 
    It can be interpreted as follows: when the environmental speed $c_A$ is larger than $2\sqrt{\max(r_1,r_3)}$, 
    a small patch has no effect, an intermediate patch induces nonlocal pulling, a large patch induces locking. 
    Similar figures in the cases $2\sqrt{r_3}\leq c_A\leq 2\sqrt{r_1}$ and $c_A<2\sqrt{r_3}$ could be produced
    and would show that the spreading speed in these cases is never impacted by the patch.}
    \label{fig:constant_patch_with_constant_speed_lambda1}
\end{figure}

Regarding regularity issues, we note that the spreading speed $c^\star$ in
Theorem \ref{thm:constant_case} is
only Lipschitz-continuous as a function of $c_A$ and $\lambda_1$ (see
also Figures~\ref{fig:constant_patch_with_constant_speed} and~\ref{fig:constant_patch_with_constant_speed_lambda1}).

We continue with two corollaries confirming that if the patch $[A(t),A(t)+L]$ moves either too slowly or too \revision{quickly}, with
explicit thresholds given by the preceding theorem (accounting for $\lim_{L\to+\infty}\lambda_1(L)=-r_2$), 
then it has no effect and the front is \revision{``locally pulled''} even in the presence of arbitrary oscillations. 
\revision{Here, by ``locally pulled'', we mean that the front is pulled in the standard classical sense as in 
\cite{Holzer_Scheel,Garnier_Giletti_Hamel_Roques}, that is, the speed of the front is determined by linearization at the leading edge that lies directly ahead of the front (\textit{linear determinacy}); this is in contrast with the nonlocal linear determinacy of the speed in the nonlocal pulling case, where the environment far ahead of the front influences significantly its speed.} 

\begin{cor}[Slow patch]\label{cor:slow_patch}
Assume 
\begin{equation}
\sup_{t\geq 0}\frac{A(t)+L}{t}\leq 2\sqrt{r_3}. 
\end{equation}
Then the front is locally pulled: $\underline{c}=\overline{c}=2\sqrt{r_3}$.
\end{cor}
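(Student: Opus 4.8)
The plan is to establish separately that $\overline c\le 2\sqrt{r_3}$ and that $\underline c\ge 2\sqrt{r_3}$; combined with the a priori inequalities $2\sqrt{\min(r_1,r_3)}\le\underline c\le\overline c\le 2\sqrt{r_2}$ recalled in the introduction, this yields $\underline c=\overline c=2\sqrt{r_3}$. The lower bound is immediate when $r_3\le r_1$, since then $2\sqrt{\min(r_1,r_3)}=2\sqrt{r_3}$; so the only genuine work for the lower bound concerns the case $r_1<r_3$.

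For the upper bound, the key remark is that by hypothesis the moving right edge $A(t)+L$ of the patch stays to the left of the curve $x=2\sqrt{r_3}\,t$, so that on the moving half-line $\{x\ge A(t)+L\}$ the growth rate equals $r_3$ exactly and, by \eqref{ass:KPP}, $f(t,x,v)\le r(t,x)\,v=r_3 v$ there. Pick $\bar M>\max(1,\|u_0\|_{\infty})$; then the constant $\bar M$ is a global super-solution of \eqref{eq:main} (since $f(t,x,\bar M)<0$), hence $0\le u\le\bar M$ for all times. For a large parameter $x_0>0$ one then sets
\[
\bar u(t,x):=\min\!\bigl(\bar M,\ \bar M\,\upe^{-\sqrt{r_3}\,(x-2\sqrt{r_3}\,t-x_0)}\bigr),
\]
which is a generalized super-solution of \eqref{eq:main} on $\{x\ge A(t)+L\}$: where $\bar u<\bar M$ it solves $\partial_t v=\partial_{xx}v+r_3 v$ with equality (since $\sqrt{r_3}\cdot 2\sqrt{r_3}=(\sqrt{r_3})^2+r_3$), so $\partial_{xx}\bar u+f(t,x,\bar u)\le\partial_{xx}\bar u+r_3\bar u=\partial_t\bar u$; where $\bar u=\bar M$ it is constant with $f(t,x,\bar M)<0$; and a minimum of super-solutions is a super-solution. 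On the parabolic boundary, $\bar u=\bar M\ge u$ on the lateral part $x=A(t)+L$ because $A(t)+L-2\sqrt{r_3}\,t\le 0<x_0$, and, choosing $x_0>\sup(\operatorname{supp}u_0)$, one has $\bar u(0,\cdot)\ge u_0$ on $\{x\ge L\}$. The parabolic comparison principle on the moving half-line then gives $u\le\bar u$ on $\{x\ge A(t)+L\}$, whence, for any $c>2\sqrt{r_3}$, $\sup_{x\ge ct}u(t,x)\le\bar M\,\upe^{\sqrt{r_3}x_0}\,\upe^{-\sqrt{r_3}(c-2\sqrt{r_3})t}\to 0$, i.e.\ $\overline c\le 2\sqrt{r_3}$. (If one only has $A(t)+L\le 2\sqrt{r_3}\,t$ asymptotically, start the comparison at a later time $T_0$ and enlarge $x_0$ using the decay of $u(T_0,\cdot)$ at $+\infty$.)

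For the lower bound, assume $r_1<r_3$. The idea is to bound $u$ from below by the solution of a \emph{single}-transition problem with a transition moving at the constant speed $2\sqrt{r_3}$, and then invoke Theorem~\ref{thm:one_interface}. Since $r_2>\max(r_1,r_3)$ by \eqref{r2>r1r3} and $A(t)\le A(t)+L\le 2\sqrt{r_3}\,t$ by hypothesis, one checks directly that
\[
\hat r(t,x):=r_1\,\mathbf 1_{\{x<2\sqrt{r_3}\,t\}}+r_3\,\mathbf 1_{\{x\ge 2\sqrt{r_3}\,t\}}\ \le\ r(t,x)\qquad\text{for all }t\ge 0,\ x\in\R,
\]
each of the three zones of $r$ being replaced by a value that is not larger (using $r_2\ge r_1$ and, crucially, $r_3\ge r_1$). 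By \eqref{ass:KPP} the nonlinearity $\hat r(t,x)v-Mv^2$ then stays below $f(t,x,v)$ for $v\ge 0$, so the solution $\hat u$ of $\partial_t\hat u=\partial_{xx}\hat u+\hat r(t,x)\hat u-M\hat u^2$ with $\hat u(0,\cdot)=u_0$ is a sub-solution of \eqref{eq:main} with the same initial datum, and therefore $\hat u\le u$ for all $t\ge 0$. Now $\hat u$ solves exactly the Cauchy problem of Theorem~\ref{thm:one_interface} with left value $r_1$, right value $r_3$ and transition speed $c_A=2\sqrt{r_3}$; since $r_1<r_3$ (case $(b)$) and $c_A=2\sqrt{r_3}\le 2\sqrt{r_3}$ (subcase $(i)$), that theorem gives $\hat u$ a well-defined spreading speed equal to $2\sqrt{r_3}$, in particular $\liminf_{t\to+\infty}\inf_{0\le x\le ct}\hat u(t,x)>0$ for every $c<2\sqrt{r_3}$. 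The same bound then holds for $u$, so $\underline c\ge c$ for all such $c$, i.e.\ $\underline c\ge 2\sqrt{r_3}$, which completes the proof.

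The only step with real content is this lower bound when $r_1<r_3$: a priori the very favorable patch sitting ahead of the front could exert nonlocal pulling and generate a front strictly faster than $2\sqrt{r_3}$ — this is precisely the generic behavior in Theorem~\ref{thm:constant_case} — and the slow-patch assumption is exactly what allows $u$ to be squeezed between $\bar u$ from above and $\hat u$ from below, the latter being the single-transition problem with transition speed \emph{equal} to $2\sqrt{r_3}$, which sits at the upper endpoint of subcase $(b)(i)$ of Theorem~\ref{thm:one_interface} and hence still spreads at $2\sqrt{r_3}$ and no faster; the same hypothesis is also what makes $\bar u$ admissible in the upper bound, so it is genuinely used in both halves. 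The remaining care is bookkeeping: getting the direction of monotonicity right in the comparison (lowering $r$ on the patch and enlarging the transition curve both decrease the solution when $r_1<r_3$), noting that $c_A=2\sqrt{r_3}$ is the included endpoint in Theorem~\ref{thm:one_interface}$(b)(i)$, and the routine verification that a minimum of two smooth super-solutions is a generalized super-solution across its corner.
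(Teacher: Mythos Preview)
Your proof is correct and follows essentially the same comparison strategy as the paper, with one minor difference worth noting. For the upper bound the paper sandwiches $r$ above by $\overline{r}(t,x)=r_2\mathbf{1}_{x<2\sqrt{r_3}t}+r_3\mathbf{1}_{x\geq 2\sqrt{r_3}t}$ and invokes Theorem~\ref{thm:one_interface} again (case $(a)(i)$, since $r_2>r_3$), whereas you build the explicit exponential super-solution $\min(\bar M,\bar M\upe^{-\sqrt{r_3}(x-2\sqrt{r_3}t-x_0)})$ directly on the moving half-line $\{x\geq A(t)+L\}$; your route is slightly more self-contained here since it does not appeal to Theorem~\ref{thm:one_interface}. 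For the lower bound both arguments coincide: replacing the patch and the left zone by the smaller value ($r_1$ when $r_1<r_3$, or $r_3$ otherwise) and moving the single transition to $x=2\sqrt{r_3}t$ yields a problem that spreads at exactly $2\sqrt{r_3}$ by Theorem~\ref{thm:one_interface}$(b)(i)$ (or is homogeneous). Your parenthetical remark about starting the comparison at a later time $T_0$ is apt, since the stated hypothesis cannot literally hold at $t=0$.
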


\begin{cor}[Fast patch]\label{cor:fast_patch}
Assume 
\begin{equation}
2\sqrt{r_1}+2\sqrt{r_2-r_1}\leq\inf_{t\geq 0}\frac{A(t)}{t}. 
\end{equation}
Then the front is locally pulled: $\underline{c}=\overline{c}=2\sqrt{r_1}$. 
\end{cor}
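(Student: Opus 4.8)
Throughout write $c_A^-:=\inf_{t\ge0}A(t)/t$, so that by hypothesis $c_A^-\ge 2\sqrt{r_1}+2\sqrt{r_2-r_1}$; a one-line squaring gives $\sqrt{r_1}+\sqrt{r_2-r_1}\ge\sqrt{r_2}$, hence $c_A^-\ge 2\sqrt{r_2}>2\sqrt{r_1}$ by \eqref{r2>r1r3}, and of course $A(t)\ge c_A^- t$ for every $t\ge 0$. Since one always has $\underline c\le\overline c$ and $\underline c\ge 2\sqrt{\min(r_1,r_3)}$, it suffices to prove $\overline c\le 2\sqrt{r_1}$ together with, in the case $r_1>r_3$ only, the complementary bound $\underline c\ge2\sqrt{r_1}$.

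\textit{Lower bound (needed only when $r_1>r_3$).} I would squeeze $u$ from below by a single-interface problem. Set $\check r(t,x):=r_1$ for $x<c_A^- t$ and $\check r(t,x):=r_3$ otherwise; using $A(t)\ge c_A^- t$ and $r_2>r_3$ one checks $r\ge\check r$ on $\R_+\times\R$. With $\tilde M:=\max(M,r_1)$, the reaction $\check f(t,x,v):=\check r(t,x)v-\tilde Mv^2$ fulfills \eqref{ass:KPP} (with constant $\tilde M$) and satisfies $f(t,x,v)\ge r(t,x)v-Mv^2\ge\check f(t,x,v)$ for all $v\ge0$, so $u$ lies above the solution $\check u$ of the corresponding Cauchy problem with the same datum. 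Now $\check r$ is exactly the constant-speed single-interface environment of Theorem~\ref{thm:one_interface} with $r_1>r_3$ and interface speed $c_A^->2\sqrt{r_1}$, so case~(a)(iii) there gives that $\check u$ spreads at speed $2\sqrt{r_1}$; hence $\underline c\ge\underline c(\check u)=2\sqrt{r_1}$.

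\textit{Upper bound.} Here the point is that $r_2>\max(r_1,r_3)$ and $A(t)\ge c_A^- t$ force $r(t,x)\le\hat r(t,x)$, where $\hat r(t,x):=r_1$ for $x<c_A^- t$ and $\hat r(t,x):=r_2$ for $x\ge c_A^- t$; by the upper KPP bound $f(t,x,v)\le r(t,x)v$ in \eqref{ass:KPP}, $u$ is then a sub-solution of the linear equation $\partial_tw=\partial_{xx}w+\hat r(t,x)w$. Fix any $c'>2\sqrt{r_1}$, pick $c\in(2\sqrt{r_1},\min(c',c_A^-))$, set $\alpha:=\tfrac12\bigl(c-\sqrt{c^2-4r_1}\bigr)\in(0,\sqrt{r_1})$ and $\delta:=\alpha(c_A^--c)>0$. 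The crucial arithmetic step is that the hypothesis $c_A^-\ge 2\sqrt{r_1}+2\sqrt{r_2-r_1}$ yields $\tfrac14(c_A^-)^2-\sqrt{r_1}\,c_A^-+2r_1-r_2\ge0$ (the roots of this quadratic in $c_A^-$ being precisely $2\sqrt{r_1}\pm2\sqrt{r_2-r_1}$), whence $\delta<\tfrac14(c_A^-)^2-r_2$; this permits the choice $\beta:=\tfrac12\bigl(c_A^-+\sqrt{(c_A^-)^2-4(r_2+\delta)}\bigr)>c_A^-/2\ge\sqrt{r_2}>\alpha$, which satisfies $\beta c_A^--\beta^2-r_2=\delta$. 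One then checks that
\[
\phi(t,x):=\begin{cases}e^{-\alpha(x-ct)}&\text{if }x\le c_A^- t,\\[2pt] e^{-\beta x+(\beta^2+r_2)t}&\text{if }x\ge c_A^- t,\end{cases}
\]
is well defined and continuous (both expressions equal $e^{-\delta t}$ on $x=c_A^- t$), solves the linear equation classically off the curve $x=c_A^- t$, and has a non-positive jump of its $x$-derivative across that curve (because $\beta>\alpha$), so that $\phi$ is a distributional — equivalently viscosity — super-solution of $\partial_tw=\partial_{xx}w+\hat r(t,x)w$; since $f(t,x,v)\le\hat r(t,x)v$ for $v\ge0$, any positive multiple $K\phi$ is a super-solution of \eqref{eq:main}. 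Choosing $K\ge 1$ so large that $K\phi(0,\cdot)\ge u_0$ (possible since $u_0$ is compactly supported) and invoking the comparison principle on $\R$ gives $u\le K\phi$; since $\sup_{x\ge c't}\phi(t,\cdot)\le\max\bigl(e^{-\alpha(c'-c)t},e^{-\delta t}\bigr)\to0$ as $t\to\infty$, we conclude $\sup_{x\ge c't}u(t,\cdot)\to0$. As $c'>2\sqrt{r_1}$ was arbitrary, $\overline c\le2\sqrt{r_1}$, and with the lower bound, $\underline c=\overline c=2\sqrt{r_1}$.

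\textit{Main obstacle.} The only real work is the super-solution $\phi$: one must produce a single function that is simultaneously a super-solution on both sides of the \emph{fast}-moving interface $x=c_A^- t$, is glued there with a concave corner, and is exponentially small strictly behind the interface (on $\{x\ge c't\}$ with $c'$ down to $2\sqrt{r_1}$); the threshold $c_A^-\ge 2\sqrt{r_1}+2\sqrt{r_2-r_1}$ is exactly the condition making such a gluing possible — i.e.\ making the exponent $\beta$ real — so the quadratic inequality above has to be verified carefully. A minor additional point is that $\phi$ blows up as $x\to-\infty$, so one must use a version of the parabolic comparison principle on $\R$ tolerating this growth, which is legitimate because $u$ is bounded. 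Finally, $\phi$ is essentially the super-solution built in the locally-pulled regime of the proof of Theorem~\ref{thm:constant_case} with $A(t)$ replaced by $c_A^- t$, so one may alternatively present the upper bound as a direct reuse of that construction once the inequality $r\le\hat r$ has been recorded.
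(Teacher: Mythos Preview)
Your proof is correct. The lower bound follows the paper's approach (sandwich by a single-interface environment and invoke Theorem~\ref{thm:one_interface}), but your upper bound takes a different route: you build by hand a two-piece exponential super-solution $\phi$, glued along $x=c_A^- t$, and verify directly that the hypothesis $c_A^-\ge 2\sqrt{r_1}+2\sqrt{r_2-r_1}$ is exactly what makes the ahead-exponent $\beta$ real. The paper instead bounds $r$ from above by a single-interface growth rate $\overline r$ (equal to $r_1$ behind and $r_2$ ahead of an interface moving at speed $2\sqrt{r_1}+2\sqrt{r_2-r_1}$) and applies Theorem~\ref{thm:one_interface}~(b)(iii) as a black box, which immediately gives the upper bound $\overline c\le 2\sqrt{r_1}$. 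Your construction is longer but self-contained and makes the role of the threshold transparent; the paper's argument is a two-line sandwich but defers all the work to Theorem~\ref{thm:one_interface}. You essentially re-derive the relevant case of that theorem, as you yourself note in the final paragraph.
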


These two corollaries remain true if the constant $L$ is replaced by some (positive and continuous) function of time $L(t)$. We also expect that $\sup$ and $\inf$ may respectively be replaced by $\limsup$ and $\liminf$. 
For the sake of brevity, we leave the latter generalization as an open problem.

Finally, we turn to the interesting case where the patch is neither too slow nor too fast but oscillates slowly. 
\revision{For brevity, we do not seek a sharp general result here; instead,} we will construct a telling counterexample 
\revision{to the equality $\underline{c}=\overline{c}$, namely an example in which a unique spreading speed is not well defined.}

\begin{thm}[Slowly oscillating speeds]\label{thm:intermediate_case_large_oscillations}
Let $\lambda_1$ be as in Theorem \ref{thm:constant_case}, and assume that $\lambda_1\neq -r_1$. 
Let $c_{A,1}, c_{A,2}$ be positive numbers satisfying  $2\sqrt{-\lambda_1}<c_{A,1}<c_{A,2}<2\sqrt{r_1}+2\sqrt{-\lambda_1-r_1}$
and let $(t_n)_{n\in\N}\subset \R$ be an increasing sequence such that:
\[
t_0=0,\quad t_{n+1}/t_n\to+\infty\ (n\to +\infty).
\]
Let $A(t)$ be a function defined iteratively as follows:
\begin{equation}\label{A(t)}
A(t)=
\begin{cases}
\,0 & t=t_0=0,\\
A(t_{2n})+c_{A,1}(t-t_{2n}) & t\in[t_{2n},t_{2n+1})\ \ (n=0,1,2,\ldots),\\
A(t_{2n+1})+c_{A,2}(t-t_{2n+1}) & t\in[t_{2n+1},t_{2n+2})\ \ (n=0,1,2,\ldots).
\end{cases}
\end{equation}
Then:
\begin{equation}
    \underline{c}\leq\frac12\left(c_{A,2}-2\sqrt{-\lambda_1-r_1}+\frac{4r_1}{c_{A,2}-2\sqrt{-\lambda_1-r_1}}\right),
\end{equation}
\begin{equation}
    \overline{c}\geq\frac12\left(c_{A,1}-2\sqrt{-\lambda_1-r_1}+\frac{4r_1}{c_{A,1}-2\sqrt{-\lambda_1-r_1}}\right).
\end{equation}
In particular, $\underline{c}<\overline{c}$.
\end{thm}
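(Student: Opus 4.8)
Set $d_0=2\sqrt{-\lambda_1-r_1}$. Since $-\lambda_1\ge\max(r_1,r_3)\ge r_1$ always, the hypothesis $\lambda_1\neq-r_1$ is exactly what makes $d_0>0$, equivalently what makes the nonlocal‑pulling interval $I:=(2\sqrt{-\lambda_1},\,2\sqrt{r_1}+d_0)$ non‑degenerate, so that $c_{A,1}<c_{A,2}$ in $I$ exist. With $m=c-d_0$ one has $F(c)=\tfrac m2+\tfrac{2r_1}m$ by \eqref{eq:def_F}, and $\tfrac{\upd}{\upd m}\bigl(\tfrac m2+\tfrac{2r_1}m\bigr)$ vanishes only at $m=2\sqrt{r_1}$, i.e.\ at $c=2\sqrt{r_1}+d_0=\sup I$; so $F$ is strictly decreasing on $I$ and $F(c_{A,1})>F(c_{A,2})$. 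The two displayed estimates are exactly $\underline c\le F(c_{A,2})$ and $\overline c\ge F(c_{A,1})$, so they give $\underline c<\overline c$; it remains to prove them. To this end observe that the map \eqref{A(t)} is affine on each $[t_n,t_{n+1}]$, with slope $c_{A,1}$ if $n$ is even (``slow phase'') and $c_{A,2}$ if $n$ is odd (``fast phase''); hence, after the obvious translation in time and space, $u$ restricted to $[t_n,t_{n+1}]$ equals the constant‑speed solution of Theorem~\ref{thm:constant_case} issued from $u(t_n,\,\cdot+A(t_n))$. Since $0\le A(t)\le\max(c_{A,1},c_{A,2})\,t$ and $t_n/t_{n+1}\to0$, any quantity of size $O(t_n)$ is negligible at the scale $t_{n+1}$; this is the only role of the hypothesis $t_{n+1}/t_n\to+\infty$, and it lets us treat each phase essentially in isolation, pushing $u$ from below on slow phases and from above on fast phases by the sub‑ and super‑solutions constructed in the proof of Theorem~\ref{thm:constant_case}.

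\textbf{Lower bound (slow phases).} Comparison from below with the homogeneous Fisher--KPP equation of rate $\min(r_1,r_3)$ gives $u(t_{2n},\cdot)\ge\eta_0>0$ on an interval of length of order $t_{2n}$ about the origin; together with a parabolic Harnack/Gaussian estimate and $A(t_{2n})=O(t_{2n})$, this yields $\gamma>0$ independent of $n$ with $u(t_{2n},\cdot)\ge e^{-\gamma t_{2n}}$ on a bounded neighbourhood of the patch. Scale the sub‑solution of the speed‑$c_{A,1}$ problem from the proof of Theorem~\ref{thm:constant_case} to height $e^{-\gamma t_{2n}}$ and place it, at time $t_{2n}$, at the patch; it is then below $u$ on all of $[t_{2n},t_{2n+1}]$. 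Its ignition is delayed by an amount logarithmic in its height, i.e.\ by $O(t_{2n})$, and after ignition its leading front moves at speed $\ge F(c_{A,1})-\varepsilon$; since the phase lasts $t_{2n+1}-t_{2n}\sim t_{2n+1}\gg t_{2n}$, that front reaches a position $\ge(F(c_{A,1})-2\varepsilon)t_{2n+1}$ once $n$ is large, behind which $u(t_{2n+1},\cdot)$ is bounded below by a positive constant. Thus $\limsup_{t\to+\infty}\sup_{x\ge(F(c_{A,1})-2\varepsilon)t}u(t,x)>0$, i.e.\ $\overline c\ge F(c_{A,1})-2\varepsilon$; let $\varepsilon\downarrow0$.

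\textbf{Upper bound (fast phases).} Along $(t_n)$ propagate the exponential upper envelope $u(t_n,x)\le\min\bigl(\bar u,\,Ce^{-\nu(x-s_nt_n)}\bigr)$ for $x\ge s_nt_n$, where $\bar u=\max(1,\|u_0\|_\infty)$, $s_n=F(c_{A,1})$ for $n$ odd and $s_n=F(c_{A,2})$ for $n$ even, the base case $n=1$ being Theorem~\ref{thm:constant_case} on $[0,t_1]$ (the datum $u_0$ is compactly supported). To pass from $t_n$ to $t_{n+1}$ across a phase of slope $c_{A,j}$, feed $u(t_n,\cdot)$ into the super‑solution of the speed‑$c_{A,j}$ problem from the proof of Theorem~\ref{thm:constant_case}, anchoring its front at a position of size $O(t_n)$ lying between the saturated region of $u(t_n,\cdot)$ and the patch — possible since $F(c_{A,j})<c_{A,j}$ on $I$. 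The super‑solution's front then advances at speed $F(c_{A,j})+\varepsilon$, hence at time $t_{n+1}$ is at $(F(c_{A,j})+o(1))t_{n+1}$; this both closes the induction (with $s_{n+1}=F(c_{A,j})$) and, for $n=2m$, gives $u\bigl(t_{2m},(F(c_{A,2})+2\varepsilon)t_{2m}\bigr)\to0$. So $\inf_{0\le x\le ct}u(t,x)\to0$ along $t=t_{2m}$ for each $c\ge F(c_{A,2})+2\varepsilon$, i.e.\ $\underline c\le F(c_{A,2})+2\varepsilon$; let $\varepsilon\downarrow0$.

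\textbf{Main obstacle.} Neither bound can merely quote Theorem~\ref{thm:constant_case}: the data $u(t_n,\cdot)$ are not compactly supported and, worse, they vary with $n$, so a priori the time to reach the asymptotic regime might grow with $n$. Everything hinges on keeping all transients $O(t_n)$, hence negligible at scale $t_{n+1}$. For the super‑solution this is automatic once its front is anchored at size $O(t_n)$; for the sub‑solution it rests on two facts to be read off the proof of Theorem~\ref{thm:constant_case}: that an $e^{-\gamma t_n}$‑small seed at the patch — legitimate by the quantitative lower bound above — ignites within time $O(t_n)$ (the delay being logarithmic in the seed height), and that, once ignited, the front only needs its speed to converge to $F(c_{A,j})$ (at no prescribed rate) for its position after time $\sim t_{n+1}$ to be $(F(c_{A,j})-o(1))t_{n+1}$. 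A further point is that the super‑solution must genuinely be of nonlocal‑pulling type — front trailing strictly behind the patch, non‑exponential across the patch — because when $F(c_{A,j})<2\sqrt{r_2}$ no travelling exponential caps the growth inside the patch.
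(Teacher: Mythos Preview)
Your overall strategy is the same as the paper's: restrict to each phase, import the super-/sub-solutions from the constant-speed proof, and use $t_{n+1}/t_n\to\infty$ to absorb transients of size $O(t_n)$. The lower-bound sketch is essentially what the paper does; the paper obtains the seed $u(t_{2n},\cdot)\ge e^{-C' t_{2n}}$ on the support of the sub-solution by a direct heat-kernel bound from $t=0$ (rather than homogeneous comparison plus Harnack), and makes the ``ignition delay'' explicit by multiplying the sub-solution by $e^{\delta t/2}$, which remains a sub-solution while small and restores an $O(1)$ plateau in time $T_n=\frac{2}{\delta}\ln(1/\kappa_n)=O(t_{2n})$.

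The upper-bound argument, however, has a genuine gap. Your induction asserts $u(t_n,x)\le\min(\bar u,\,Ce^{-\nu(x-s_nt_n)})$ but never fixes $\nu$, and the step ``feed $u(t_n,\cdot)$ into the super-solution, anchoring its front at a position of size $O(t_n)$'' hides exactly this issue. The super-solution $\overline u_2(0,\cdot)$ from \eqref{super_step2} decays on the right like $e^{-(c_{A,j}/2)x}\varphi_1(x/L)$; to have $K_n\overline u_2(0,x-A(t_n))\ge u(t_n,x)$ with $\ln K_n=O(t_n)$ you need the right tail of $u(t_n,\cdot)$ to decay at least as fast, and nothing in your induction guarantees this rate is preserved from one phase to the next (the phases have different $c_{A,j}$, hence different tail rates). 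Without that, the multiplicative constant may blow up faster than $e^{O(t_n)}$ and the $o(t_{n+1})$ bookkeeping collapses. The paper sidesteps the induction entirely with a single \emph{global} auxiliary super-solution $u^\#(t,x)=\min(1,e^{-\lambda^\#(x-c^\# t)})$, where $\lambda^\#=\frac{c_{A,2}}{2}+\sqrt{-r_3-\lambda_1}+\eta$ and $c^\#=\lambda^\#+r_2/\lambda^\#\ge 2\sqrt{r_2}$; this dominates $u$ for all $t$ regardless of the current patch speed, and its decay rate $\lambda^\#$ is deliberately chosen steeper than that of $\overline u_2(0,\cdot)$, so that the required $K_n$ is explicitly $e^{O(t_{2n+1})}$, which is killed by the factor $e^{-2\lambda(c^\star_2)\varepsilon t_{2n+2}}$ since $t_{2n+1}=o(t_{2n+2})$. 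That global envelope is the missing ingredient in your argument.
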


\begin{rem}\label{rem:lambda1-r1}
The condition $\lambda_1\ne -r_1$ is satisfied if and only if either $r_1< r_3$, or $r_1 \geq r_3$ and $L >\underline{L}$.
\end{rem}

In the above example, the speed of the patch oscillates slower and slower between two values either of which would lead to nonlocal pulling in the absence of oscillations. The speed of the front then slowly fluctuates forever, thus making the spreading speed ill-defined. 
This result is inspired by \cite{Garnier_Giletti_Nadin_2012}, where temporally constant environments with slower and slower spatial oscillations lead similarly to a gap between the minimal and the maximal spreading speeds. 
It will be clear from the proof that the result can be extended straightforwardly to the case $c_{A,1}>c_{A,2}$ or to the case where $L$ or $r_2$ oscillates instead of $c_A$. 
The general conclusion we draw from this result is that oscillations of the patch, in height, width or speed, can be enough to break the uniqueness of the spreading speed, even if $r_1=r_3$. Furthermore, this result holds for an arbitrarily small~$L$, which implies that a small perturbation of the homogeneous KPP equation may not only change the spreading speed (as we already outlined above) but even make the speed ill-defined.

\subsection{Relation with multi-species models}\label{sec:multi-species}

We end this section with a discussion related to the study of spreading properties of multi-species reaction--diffusion systems, which is our main motivation 
for starting the present work. The results of the present paper show how a traveling patch of a highly favorable zone 
can strongly impact the speed of the invasion fronts, even if they travel far behind the patch. 
In particular, if a nonlocal pulling occurs, finding the precise front speeds would become a much more intricate task.   
This suggests that, when one studies the behavior of invading fronts in multi-species reaction--diffusion models, determining the speeds 
of the slower fronts could be much harder than determining those of the faster fronts, since the environmental changes created by the 
faster fronts may induce such effects as nonlocal pulling or locking on the slower fronts. 
For example, consider the following (conceptual) reaction--diffusion population models:
\begin{itemize}
\item[(A)] 3-species competition model;
\item[(B)] 2-prey \& 1-predator model.
\end{itemize}

First, in the model (A), let $u,v, w$ denote the populations of mutually competing species that invade a region at the speeds $c_u>c_v>c_w$. More precisely, 
\begin{enumerate}
\item $u$ invades the open space at speed $c_u$ as if it was alone in the environment; 
\item then the slower species $v$ replaces $u$ by competition at speed $c_v$; 
\item finally, the slowest species $w$ propagates at speed $c_w$ by replacing $v$. 
\end{enumerate}
Let us focus on the speed of $w$. 
If the interspecific competition between $u$ and $v$ is very strong, then at the interface between the two species 
(which travels at speed $c_v$), 
there appears a zone where $u, v$ are both very small. This zone can be seen as a favorable patch for the third species $w$. Therefore, although this patch lies far ahead of the front of $w$, it may still have a strong effect on the speed of $w$, just like the central patch in our equation~\eqref{eq:main}. 
At the moment, the precise effect of this patch is not fully understood, but at least it is known, though in a different context, that a strong competition between $u$ and $v$ can create a favorable patch for $w$; see \cite{Ei_Ikeda_Ogawa_2023}.

On the other hand, if the interspecific competition between $u$ and $v$ is not very strong, the superposition of the two populations would have higher density around the interface between $u$ and $v$, thus creating an unfavorable zone for $w$, contrary to the above case. The paper of Lam, Liu and Liu 
\cite[Proposition A.4]{Lam_Liu_Liu_2019} precisely identifies a parameter regime where this would indeed occur. In such a situation, the interface between $u$ and $v$ may not have much effect on the speed of $w$.

Next we consider the model (B). Let $u, v$ denote the prey populations and $w$ the predator that invade a region at speeds $c_u>c_v>c_w$, basically in the same manner as (1), (2), (3) above, except that the slowest species $w$ is a predator. We assume that the interspecific competition between $u$ and $v$ is relatively mild, so that the superposition of the two populations have higher density around the interface between $u$ and $v$, as in \cite[Proposition A.4]{Lam_Liu_Liu_2019}. Then this zone may serve as a favorable patch for the predator $w$, therefore it may have a non-negligible effect on the speed of $w$, just as in the case (A) with strong inter-specific competition between $u$ an $v$.  
In such parameter regimes, the study of the speed of the predator would require a sharp quantification of the height, width and speed of the corresponding patch, which is currently out of reach in general. The same discussion may be held in the case of one prey and two predators, for which we refer to~\cite{Ducrot_Giletti_Guo_Shimojo_2021} where among other things the nonlocal pulling phenomenon was also observed.



\section{Proofs}\label{s:proofs}

\subsection{Technical preliminaries}\label{sec:construc_eigenfct}

\subsubsection{A change of variables and a parabolic eigenproblem in the whole space-time}

In what follows we assume in addition that $A$ is of class $\mathcal{C}^2$. 

Following Allwright \cite{Allwright_2021}, we can transform the linearized equation
\begin{equation}
\partial_t u - \partial_{xx} u = r(t,x)u
\end{equation}
into
\begin{equation}
\partial_t v - \frac{1}{L^2}\partial_{yy}v = \left( m(y)+\frac{y A''(t)L}{2} \right)v,
\end{equation}
where
\begin{equation}\label{m}
m(y)=r_1 \mathbf{1}_{y<0} + r_2 \mathbf{1}_{0\leq y <1} + r_3 \mathbf{1}_{1\leq y},
\end{equation}
by means of the change of variable
\begin{equation}
v(t,y)=u\left(t,Ly+A\left(t\right)\right)\exp\left(\int_0^t\frac{A'(\tau)^2}{4}\upd \tau +\frac{yA'(t)L}{2}\right).
\end{equation}

Similarly, considering an eigenproblem where $r(t,x)$ is replaced by $r(t,x)+\lambda$
changes in the new equation $m(y)$ into $m(y)+\lambda$. More precisely, inspired by 
\cite{Berestycki_Nadin_2022}, we are led to considering the following eigenproblem:
\begin{equation}
\begin{cases}
\mathcal{P}\varphi_1=\lambda\varphi_1 & \text{in }\R\times\R, \\
\varphi_1>0 & \text{in }\R\times\R, \\
\varphi_1\in \left\{\psi\in\mathcal{D}'(\R^2)\ |\ \partial_t\psi, \partial_y\psi, \partial_{yy}\psi\in L^1_{\text{loc}}(\R^2)\right\}, & 
\end{cases}
\end{equation}
with
\begin{equation}
\mathcal{P}=\partial_t - \frac{1}{L^2}\partial_{yy} - \left( m(y)+\frac{y A''(t)L}{2} \right).
\end{equation}

Let us emphasize that, to the best of our knowledge, there is no known general theory
for such eigenproblems. In particular, if $A''(t)\not\equiv 0$, keeping $\varphi, \psi$ within a bounded range is already an issue difficult to overcome. This is why from now on we focus on a more feasible special case. Other special cases where this eigenproblem makes sense might be investigated in future sequels.

\subsubsection{An elliptic eigenproblem in the whole space}

If $A$ is linear, written as $A(t)=c_At$, then the above transformation reduces to
\begin{equation}
\partial_t v -\frac{1}{L^2}\partial_{yy} v = m(y)v,
\end{equation}
with the same change of variable
\begin{equation}
v(t,y)=u\left(t,L y +c_At\right)\exp\left(\frac{c_A^2 t}{4} +\frac{c_A L y}{2}\right).
\end{equation}

To study the above equation, the following eigenproblem for the operator
\begin{equation}\label{operator-L}
\mathcal{L}=L^{-2}\frac{\upd^2}{\upd y^2}+m,
\end{equation}
where $m$ is as defined in \eqref{m}, will play a crucial role:
\begin{equation}\label{eq:eigenproblem1}
\begin{cases}
-\mathcal{L} \varphi_1 =\lambda\varphi_1 & \text{in }\R, \\
\varphi_1>0 & \text{in }\R, \\
\varphi_1\in W^{2,1}_{\text{loc}}(\R). & 
\end{cases}
\end{equation}
This problem can be solved explicitly, piece by piece with a global $\mathcal{C}^1$ regularity\footnote{Recall
that $W^{2,1}_{\text{loc}}(\R)$ is continuously embedded into $C^{0,\alpha}_{\text{loc}}(\R)$ for any $\alpha\in(0,1)$. 
After a standard bootstrap procedure, we find that the weak solution $\varphi_1$ of
$-L^{-2}\varphi_1''-m\varphi_1=\lambda\varphi_1$ is
in $C^{1,1}_{\text{loc}}(\R)\simeq W^{2,\infty}_{\text{loc}}(\R)$, namely it is of class $C^1$ and its derivative
is locally Lipschitz-continuous.}. 

Unlike a similar eigenvalue problem on a finite interval, an eigenvalue $\lambda$ of $-{\mathcal L}$ with a positive eigenfunction 
is not necessarily unique \revision{\cite[Theorem 1.4]{Berestycki_Ros}}. 
Therefore the right concept of principal eigenvalue has to be defined carefully. According to~\cite[Theorem 1.4]{Berestycki_Ros}, 
the second-order self-adjoint operator~$-\mathcal{L}$ admits a \textit{generalized principal eigenvalue}~$\lambda_1$, which can be defined as the supremum of the set of values of $\lambda$ for which $(\mathcal{L}+ \lambda) \varphi \leq 0$ holds for some positive function $\varphi$. 
Equivalently, it can be defined as the limit of the principal eigenvalues of truncated problems with the Dirichlet boundary condition. By analogy with many previous studies on KPP heterogeneous problems, we expect this generalized eigenvalue to be crucial to accurately predict the spreading speed of solutions for compactly supported initial data. 

\revision{Nonnegative eigenfunctions associated to the generalized principal eigenvalue are referred to as \textit{generalized principal eigenfunctions}.}

Therefore, this subsection is devoted to the computation of the generalized principal eigenvalue $\lambda_1$ \revision{and of an associated generalized principal eigenfunction}.

For later use and convenience, we sum up some of the main results from~\cite{Berestycki_Ros} 
\revision{concerning the definition and basic properties of the principal eigenvalue $\lambda_1$.}

\begin{prop}[{\cite[Theorems~1.4 and~1.7, Proposition~2.3]{Berestycki_Ros}}]\label{prop:eigen}
Define
$$\lambda_1 := \sup \left\{ \lambda \, | \ \exists \varphi \in W^{2,1}_{\text{loc}} (\R) , \ \varphi >0 \ \text{ and } \ (\mathcal{L}+ \lambda) \varphi \leq 0 \right\} \in \mathbb{R}.$$
Then $\lambda_1$ satisfies
\begin{eqnarray*}
\lambda_1 & = & \max \left\{ \lambda \, | \ \exists \varphi \in W^{2,1}_{\text{loc}} (\R) , \ \varphi >0 \ \text{ and } \ - \mathcal{L} \varphi = \lambda \varphi \right\} \\
& = & \inf \left\{ \lambda \, | \ \exists \varphi \in W^{2,1}_{\text{loc}} (\R) \cap L^\infty (\R) , \ \varphi >0 \ \text{ and } \ (\mathcal{L} + \lambda) \varphi \geq 0 \right\}.
\end{eqnarray*}
In particular, if there exists a positive and bounded solution $\varphi$ of $-\mathcal{L} \varphi = \lambda \varphi$ on $\R$, then one must have $\lambda = \lambda_1$.
Furthermore, $$\lambda_1 = \lim_{R \to +\infty} \lambda_1^R,$$
where $\lambda_1^R$ denotes the unique solution of the eigenvalue problem
\begin{equation}
\begin{cases}
-\mathcal{L} \varphi  = \lambda_1^R \varphi & \text{in } (-R,R), \\
\varphi_1>0 & \text{in } (-R,R), \\
\varphi_1 (\pm R) = 0 . & \\ 
\end{cases}
\end{equation}
\end{prop}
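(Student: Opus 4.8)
The plan is to obtain all four assertions by quoting the general theory of generalized principal eigenvalues of Berestycki and Rossi, after checking that the operator $\mathcal{L}=L^{-2}\frac{\upd^2}{\upd y^2}+m$ fits their framework. The only hypotheses to verify are that the principal part is uniformly elliptic with smooth coefficients --- here it is constant, equal to $L^{-2}$ --- and that the zeroth-order coefficient belongs to $L^\infty(\R)$: this is immediate from \eqref{m}, since $0<\min(r_1,r_2,r_3)\le m\le \max(r_1,r_2,r_3)$. Granting this, the identification of the supremum definition of $\lambda_1$ with the first maximum characterization (with a genuine, a priori unbounded, positive eigenfunction) and with the last infimum characterization (over \emph{bounded} positive subsolutions) is exactly \cite{Berestycki_Ros}. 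Finiteness of $\lambda_1$ would be recorded separately: testing with $\varphi\equiv1$ gives $(\mathcal{L}+\lambda)\varphi=m+\lambda$, so every $\lambda\le-\sup m$ lies in the admissible set and $\lambda_1\ge-\sup m$; and a positive supersolution on $\R$ restricts to one on any bounded interval, which forces $\lambda$ below the Dirichlet principal eigenvalue there, whence $\lambda_1<+\infty$.

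For the exposition to be reasonably self-contained, I would spell out directly the two facts that are used most heavily afterwards. First, the convergence $\lambda_1=\lim_{R\to+\infty}\lambda_1^R$: each $\lambda_1^R$ is the bottom of the spectrum of the self-adjoint operator $-\mathcal{L}$ with Dirichlet conditions on $(-R,R)$, i.e.\ the infimum of the Rayleigh quotient $\int_{-R}^R(L^{-2}|\varphi'|^2-m\varphi^2)/\int_{-R}^R\varphi^2$ over $H_0^1(-R,R)\setminus\{0\}$; extension by zero shows $(\lambda_1^R)_R$ is nonincreasing and bounded below by $-\sup m$, hence convergent, and a routine argument identifies the limit with the variational quantity $\inf_{\varphi\in H^1_c(\R)\setminus\{0\}}\int(L^{-2}|\varphi'|^2-m\varphi^2)/\int\varphi^2$, which is $\lambda_1$ by \cite{Berestycki_Ros}. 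Second, that the supremum is attained by a positive eigenfunction: normalizing $\varphi_1^R$ by $\varphi_1^R(0)=1$, interior elliptic estimates together with the Harnack inequality give bounds on $\varphi_1^R$ and $(\varphi_1^R)'$ that are locally uniform in $R$; extracting a subsequence converging in $C^1_{\text{loc}}(\R)$ to some $\varphi_1>0$ and passing to the limit in the equation yields $-\mathcal{L}\varphi_1=\lambda_1\varphi_1$ on $\R$, with $\varphi_1\in W^{2,1}_{\text{loc}}(\R)$ --- in fact $C^{1,1}_{\text{loc}}(\R)$ by the bootstrap mentioned in the footnote.

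The last assertion then falls out by combining the characterizations: a positive bounded solution of $-\mathcal{L}\varphi=\lambda\varphi$ is a positive supersolution, hence $\lambda\le\lambda_1$ by the defining supremum, and simultaneously a positive bounded subsolution, hence $\lambda\ge\lambda_1$ by the infimum characterization, so $\lambda=\lambda_1$. The step I expect to be the genuine obstacle is precisely that infimum characterization, namely the inequality $\lambda_1\ge\inf\{\lambda\mid\exists\,\varphi>0\text{ bounded},\ (\mathcal{L}+\lambda)\varphi\ge0\}$: unlike the supersolution side, it really uses boundedness of the test function (otherwise exponential subsolutions would break it), and establishing it is the point where one must invoke the machinery of \cite{Berestycki_Ros} rather than an elementary maximum-principle comparison. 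Everything else is standard elliptic theory.
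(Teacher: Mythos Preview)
Your proposal is correct and aligns with the paper's treatment: the paper gives no proof of this proposition at all, simply citing \cite[Theorems~1.4 and~1.7, Proposition~2.3]{Berestycki_Ros} as the source. Your verification that $\mathcal{L}$ fits the Berestycki--Rossi framework and your expository sketches (finiteness, Dirichlet approximation via Rayleigh quotients, compactness for the eigenfunction, and the two-sided inequality for the ``in particular'' clause) are all sound and go beyond what the paper provides, but the core approach---invoke \cite{Berestycki_Ros}---is identical.
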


As a preliminary step, we note that
\begin{equation}\label{ineq:lambda1}
-r_2 \leq \lambda_1\leq-\max(r_1,r_3).
\end{equation}
Indeed, on the one hand, from our assumption that $r_2 = \max m > \max (r_1,r_3)$, we get by taking $\varphi =1$ as a test function that
$$( \mathcal{L} - r_2) \varphi  \leq 0.$$
Hence, by its definition, $\lambda_1 \geq -r_2$. On the other hand, for any eigenvalue $\lambda>-\max(r_1,r_3)$, any eigenfunction is a sinusoidal function at least on a half-line, and therefore necessarily changes sign. Since Proposition~\ref{prop:eigen} implies that $\lambda_1$ is associated with a positive eigenfunction, we infer that $\lambda_1 \leq -\max (r_1,r_3)$.\\

Hereafter, we will compute the eigenpair $(\lambda_1,\varphi_1)$ in three complementary cases:

\begin{enumerate}[label=(\roman*)]
\item $L>\underline{L}$;
\item $r_1<r_3$ and $L\leq\underline{L}$;
\item $r_1>r_3$ and $L\leq\underline{L}$.
\end{enumerate}
Recall that the threshold $\underline{L}$ is defined by:
\begin{equation}
\underline{L}=
\begin{cases}
0 & \text{if }r_1=r_3, \\
\frac{1}{\sqrt{r_2-\max(r_1,r_3)}}\operatorname{arccot}\left(\sqrt{\frac{r_2-\max(r_1,r_3)}{|r_1-r_3|}}\right) & \text{if }r_1\neq r_3.
\end{cases}
\end{equation}
This definition will become natural in a moment. Among other properties of the eigenpair that will be summarized at the end of this subsection, we will establish the following lemma.
\begin{lem}\label{lem:eigen} The generalized principal eigenvalue $\lambda_1$ satisfies:
\begin{itemize}
\item if $L \leq \underline{L}$, then 
$$\lambda_1 = - \max (r_1, r_3 );$$
\item if $L > \underline{L}$, then $\lambda_1$ is the unique zero of the function
$$
\lambda\in\left(-r_2,\overline{\lambda}\right)\mapsto\cot\left(L\sqrt{r_2+\lambda}\right)-\frac{r_2+\lambda-\sqrt{(r_1+\lambda)(r_3+\lambda)}}{\sqrt{r_2+\lambda}(\sqrt{-r_1-\lambda}+\sqrt{-r_3-\lambda})}
$$
where $\overline{\lambda} = \min \left( -\max (r_1,r_3) , \frac{\pi^2}{L^2} - r_2 \right)$.
\end{itemize}
\end{lem}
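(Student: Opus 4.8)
The plan is to determine the eigenpair $(\lambda_1,\varphi_1)$ by solving the ODE $-\mathcal{L}\varphi=\lambda\varphi$ separately on the three intervals where $m$ is constant and then matching at $y=0$ and $y=1$ (a $W^{2,1}_{\text{loc}}(\R)$ solution is automatically $C^1$, so the matching is of value and first derivative). Since the reflection $y\mapsto 1-y$ exchanges $r_1\leftrightarrow r_3$ and fixes $\underline{L}$, I would assume $r_1\ge r_3$ throughout; in particular $r_1=r_3$ forces $\underline{L}=0<L$, so in that case only $L>\underline{L}$ arises. By \eqref{ineq:lambda1} it suffices to look for $\lambda\in[-r_2,-r_1]$. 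For $\lambda$ in the interior, $r_2+\lambda>0$ and $-\lambda-r_1,-\lambda-r_3\ge0$, so the only positive \emph{bounded} solutions on the half-lines are $\varphi\propto e^{\mu_1 y}$ on $(-\infty,0)$ and $\varphi\propto e^{-\mu_3 y}$ on $(1,+\infty)$ with $\mu_1=L\sqrt{-\lambda-r_1}$, $\mu_3=L\sqrt{-\lambda-r_3}$, while on $(0,1)$, $\varphi=\rho\sin(\omega y+\phi)$ with $\omega=L\sqrt{r_2+\lambda}$. Matching the logarithmic derivative at $y=0$ yields $\cot\phi=p:=\sqrt{-\lambda-r_1}/\sqrt{r_2+\lambda}$, and at $y=1$ it yields $\cot(\omega+\phi)=-q$ with $q:=\sqrt{-\lambda-r_3}/\sqrt{r_2+\lambda}$; eliminating $\phi$ with the addition formula for $\cot$ gives $\cot\omega=(1-pq)/(p+q)$, which is exactly \eqref{eq:equation_defining_lambda1}. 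Write $g$ for the function in the statement, so that this is the equation $g(\lambda)=0$.

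Next I would show that $g$ has exactly one zero in $(-r_2,\overline{\lambda})$ when $L>\underline{L}$ and none when $L\le\underline{L}$. The clean way is to notice that, with $\phi=\operatorname{arccot}p$ and $\psi:=\operatorname{arccot}q$ (both in $(0,\tfrac{\pi}{2})$ since $p,q>0$), the rational fraction equals $-\cot(\phi+\psi)$, so $g(\lambda)=\cot\!\big(L\sqrt{r_2+\lambda}\big)+\cot\!\big(\operatorname{arccot}p(\lambda)+\operatorname{arccot}q(\lambda)\big)$, with both arguments lying in $(0,\pi)$ on $(-r_2,\overline{\lambda})$ — the bound $\overline{\lambda}\le\pi^2/L^2-r_2$ is precisely what keeps $L\sqrt{r_2+\lambda}<\pi$. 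A one-line computation using $r_2>\max(r_1,r_3)$ shows $p$ and $q$ are strictly decreasing in $\lambda$, hence $\operatorname{arccot}p+\operatorname{arccot}q$ and $L\sqrt{r_2+\lambda}$ are strictly increasing, and since $\cot$ is strictly decreasing on $(0,\pi)$, $g$ is strictly decreasing. For the endpoints: $g\to+\infty$ as $\lambda\to-r_2^+$ because $L\sqrt{r_2+\lambda}\to0^+$; and at the right endpoint, either $\overline{\lambda}=\pi^2/L^2-r_2$, where $L\sqrt{r_2+\lambda}\to\pi^-$ gives $g\to-\infty$, or $\overline{\lambda}=-r_1$, where $g(-r_1)=\cot\!\big(L\sqrt{r_2-r_1}\big)-\sqrt{(r_2-r_1)/(r_1-r_3)}$ has the sign opposite to $L-\underline{L}$, since $\underline{L}\sqrt{r_2-r_1}=\operatorname{arccot}\sqrt{(r_2-r_1)/(r_1-r_3)}$ and $\cot$ is decreasing. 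This produces the dichotomy.

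To conclude, if $L>\underline{L}$, let $\lambda_\star$ be the unique zero of $g$. Reversing the construction, the $C^1$-glued function ($e^{\mu_1 y}$, then $\rho\sin(\omega y+\phi)$, then $e^{-\mu_3 y}$) is well defined, bounded, and positive: on $(0,1)$ its phase runs over $[\phi,\omega+\phi]$, and $\cot(\omega+\phi)=-q<0$ together with $\omega+\phi<\tfrac{3\pi}{2}$ forces $\omega+\phi\in(\tfrac{\pi}{2},\pi)$, so the phase stays in $(0,\pi)$. Hence it is a positive bounded solution of $-\mathcal{L}\varphi_1=\lambda_\star\varphi_1$, and Proposition~\ref{prop:eigen} forces $\lambda_1=\lambda_\star$. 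If instead $L\le\underline{L}$ (so necessarily $r_1>r_3$), I would exhibit the explicit positive $C^1$ supersolution $\varphi\equiv1$ on $(-\infty,0]$, $\varphi(y)=\cos\!\big(L\sqrt{r_2-r_1}\,y\big)$ on $[0,1]$ (positive there, since $L\sqrt{r_2-r_1}\le\operatorname{arccot}\sqrt{(r_2-r_1)/(r_1-r_3)}<\tfrac{\pi}{2}$), continued on $[1,+\infty)$ by the $C^1$ solution $a e^{\mu_3(y-1)}+b e^{-\mu_3(y-1)}$ of $\varphi''=L^2(r_1-r_3)\varphi$, whose matching constants satisfy $b>0$ and $a=\tfrac12\big(\cos\omega-\sqrt{(r_2-r_1)/(r_1-r_3)}\sin\omega\big)\ge0$, the last inequality being equivalent to $L\le\underline{L}$. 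Then $\varphi>0$ on $\R$ and $(\mathcal{L}+r_1)\varphi=0$, so by the supremum characterization in Proposition~\ref{prop:eigen} one gets $\lambda_1\ge-r_1$, which with \eqref{ineq:lambda1} yields $\lambda_1=-r_1=-\max(r_1,r_3)$.

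The part I expect to be delicate is not the algebra producing $g$, but the endpoint/boundary analysis: one must split carefully the two possibilities for $\overline{\lambda}$, keep the middle sinusoid's phase inside $(0,\pi)$ so that the candidate eigenfunction really is positive, and arrange the inequalities so that the threshold $\underline{L}$ and the cap $\pi^2/L^2-r_2$ come out exactly as stated — all while tracking the reflection reducing $r_1<r_3$ to $r_1>r_3$ and the degenerate case $r_1=r_3$.
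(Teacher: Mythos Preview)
Your proof is correct and follows the same overall plan as the paper: solve the ODE piecewise, $C^1$-match at $y=0,1$, derive the transcendental equation, and invoke Proposition~\ref{prop:eigen}. Two differences are worth flagging. First, your monotonicity argument for $g$ via the rewriting $g(\lambda)=\cot\omega+\cot(\operatorname{arccot}p+\operatorname{arccot}q)$ is neater than the paper's, which concedes that the monotonicity ``can be checked after a lengthy computation which we omit here''; this is a real gain in transparency. Second, for $L\le\underline{L}$ (after your reduction to $r_1>r_3$), you construct a \emph{different} positive eigenfunction from the paper's---yours is constant on $(-\infty,0]$ and generically exponentially growing on $[1,+\infty)$, whereas the paper's grows linearly on one side and decays exponentially on the other. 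Both suffice to pin down $\lambda_1$ via Proposition~\ref{prop:eigen}, so the lemma is proved either way; note however that the paper's choice (with at-most-linear growth at $+\infty$) is the one reused in the super-solution of Step~2 in Section~\ref{sec:proof_main}, so your eigenfunction would not transplant there directly. One minor slip: you write $(\mathcal{L}+r_1)\varphi=0$, but your construction yields $\mathcal{L}\varphi=r_1\varphi$, i.e.\ $(\mathcal{L}-r_1)\varphi=0$; the sup characterization requires $(\mathcal{L}+\lambda)\varphi\le0$ at $\lambda=-r_1$, which is exactly $(\mathcal{L}-r_1)\varphi\le0$, so the conclusion is unaffected.
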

The constructions below will involve real constants $C_1,C_2,C_3,C_4,C_5$
with different meanings in each case.

\subsubsection{Construction of the generalized principal eigenpair in the case $L>\underline{L}$}

In the first case \revision{(i) above,} we look for an eigenfunction of the form:
\begin{equation}\label{caseL_0>underlineL}
\varphi_1(y)=
\begin{cases}
C_1\exp(L\sqrt{-r_1-\lambda_1}y) & \text{if } y\leq 0, 
\\
C_2\sin(L\sqrt{r_2+\lambda_1}y+C_3) & \text{if }0<y<1, \\
C_4\exp(-L\sqrt{-r_3-\lambda_1}y) & \text{if } 1\leq y,
\end{cases}
\end{equation}
with $C_1 >0,C_2,C_3,C_4$ satisfying
\begin{equation}
\begin{cases}
C_1=C_2\sin(C_3),\\
C_2\sin(L\sqrt{r_2+\lambda_1}+C_3)=C_4\exp(-L\sqrt{-r_3-\lambda_1}),\\
C_1\sqrt{-r_1-\lambda_1}=C_2\sqrt{r_2+\lambda_1}\cos(C_3),\\
C_2\sqrt{r_2+\lambda_1}\cos(L\sqrt{r_2+\lambda_1}+C_3)=-C_4\sqrt{-r_3-\lambda_1}\exp(-L\sqrt{-r_3-\lambda_1}),\\
0<C_3<L\sqrt{r_2+\lambda_1}+C_3<\pi.
\end{cases}
\end{equation}
These conditions ensure that the function $\varphi_1$ is $C^1$ and positive. 
\revision{Furthermore, it is clear by construction that $\varphi_1$ is bounded. Therefore, if we can find an eigenfunction of the form \eqref{caseL_0>underlineL}, then by Proposition~\ref{prop:eigen}, $\varphi_1$ is a generalized principal eigenfunction of \eqref{eq:eigenproblem1} and $\lambda_1$ is the generalized principal eigenvalue.}

Fixing without loss of generality $C_1=1$ (which implies $\varphi_1(0)=1$) and rearranging terms thanks to classical trigonometric identities, we end up with:
\begin{equation}
\begin{cases}
C_2=\frac{1}{\sin C_3},\\
\cot C_3 =\sqrt{\frac{-r_1-\lambda_1}{r_2+\lambda_1}} ,\\
C_4=\upe^{L\sqrt{-r_3-\lambda_1}}\frac{\sin(L\sqrt{r_2+\lambda_1}+C_3)}{\sin C_3},\\
\cot\left(L\sqrt{r_2+\lambda_1}\right)=\frac{r_2+\lambda_1-\sqrt{(r_1+\lambda_1)(r_3+\lambda_1)}}{\sqrt{r_2+\lambda_1}(\sqrt{-r_1-\lambda_1}+\sqrt{-r_3-\lambda_1})} ,\\
0<C_3<L\sqrt{r_2+\lambda_1}+C_3<\pi.
\end{cases}
\end{equation}
Note that the last inequality implies $L\sqrt{r_2+\lambda_1}<\pi$, so that $\cot(L\sqrt{r_2+\lambda_1})$ is well-defined indeed. 

It only remains to show that the equation defining implicitly $\lambda_1$ admits indeed a solution smaller than 
$\overline{\lambda}=\min\left(-\max\left(r_1,r_3\right),\frac{\pi^2}{L^2}-r_2\right)$. Note that the value
of~$\overline{\lambda}$ depends on the sign of $L-\frac{\pi}{\sqrt{r_2-\max(r_1,r_3)}}$. Even with the restriction 
$L>\underline{L}$, this quantity can have any sign, since the $\operatorname{arccot}$ function maps $(0,+\infty)$ onto $(0,\pi/2)$,
so that the threshold $\frac{\pi}{\sqrt{r_2-\max(r_1,r_3)}}$ is larger than $2\underline{L}$.

The function
\begin{equation}
\lambda\in\left(-r_2,\overline{\lambda}\right)\mapsto\cot\left(L\sqrt{r_2+\lambda}\right)-\frac{r_2+\lambda-\sqrt{(r_1+\lambda)(r_3+\lambda)}}{\sqrt{r_2+\lambda}(\sqrt{-r_1-\lambda}+\sqrt{-r_3-\lambda})}
\end{equation}
is the sum of two smooth decreasing functions, which can be checked after a lengthy computation which we omit here. When $\lambda\to-r_2$, it tends to $+\infty$.
When $\lambda\to\overline{\lambda}$, it tends to:
\begin{enumerate}
\item $-\infty$ if one of the following two conditions hold true:
\begin{enumerate}
\item $r_1=r_3$, 
\item $r_1\neq r_3$ and $L\geq\frac{\pi}{\sqrt{r_2-\max(r_1,r_3)}}$;
\end{enumerate}
\item $\cot\left(L\sqrt{r_2-\max(r_1,r_3)}\right)-\sqrt{\frac{r_2-\max(r_1,r_3)}{|r_1-r_3|}}$ if $r_1\neq r_3$ and
$L<\frac{\pi}{\sqrt{r_2-\max(r_1,r_3)}}$.
\end{enumerate}
The finite limit in the second case has exactly the sign of
$\underline{L}-L$, by decreasing monotonicity of the $\operatorname{arccot}$ function, and is consequently negative.
Therefore, in all cases, the continuous decreasing function
\begin{equation}\label{eq:fctcase1}
\lambda\in\left(-r_2,\overline{\lambda}\right)\mapsto\cot\left(L\sqrt{r_2+\lambda}\right)-\frac{r_2+\lambda-\sqrt{(r_1+\lambda)(r_3+\lambda)}}{\sqrt{r_2+\lambda}(\sqrt{-r_1-\lambda}+\sqrt{-r_3-\lambda})}
\end{equation}
admits indeed a unique zero.

We conclude that in the case $L > \underline{L}$, we have found a unique positive and bounded eigenfunction of the form \eqref{caseL_0>underlineL}, associated with the eigenvalue $\lambda_1$ defined both as the unique zero of \eqref{eq:fctcase1}, and as the generalized principal eigenvalue from Proposition~\ref{prop:eigen}.

\subsubsection{Construction of the generalized principal eigenpair in the case $r_1<r_3$ and $L\leq\underline{L}$}
In the second case we look for an eigenpair $(\lambda_1, \varphi_1)$ of the form:
$$\lambda_1 = - r_3,$$
and
\begin{equation}\label{eq:eigen_case2}
\varphi_1(y)=
\begin{cases}
C_1\exp(L\sqrt{-r_1-\lambda_1}y) & \text{if }y \leq 0, \\
C_2\sin(L\sqrt{r_2+\lambda_1}y+C_3) & \text{if }0<y<1, \\
C_4 L (y-1)+C_5 & \text{if }1 \leq y,
\end{cases}
\end{equation}
where $C_1>0,C_2,C_3,C_4,C_5$ satisfy
\begin{equation}\label{case2:conditions}
\begin{cases}
C_1=C_2\sin(C_3),\\
C_2\sin(L\sqrt{r_2+\lambda_1}+C_3)=C_5,\\
C_1\sqrt{-r_1-\lambda_1}=C_2\sqrt{r_2+\lambda_1}\cos(C_3),\\
C_2\sqrt{r_2+\lambda_1}\cos(L\sqrt{r_2+\lambda_1}+C_3)=C_4,\\
0<C_3<L\sqrt{r_2+\lambda_1}+C_3\leq\pi/2.
\end{cases}
\end{equation}
The last condition of~\eqref{case2:conditions} ensures the nonnegativity of~$C_4$, and the positivity of~$C_5$, hence \revision{$\varphi_1>0$} for all $y > 1$. On the other hand, notice that the eigenfunction $\varphi_1$ is no longer bounded here. 
\revision{Therefore, it is not immediate that this is a generalized principal eigenfunction. However, we see from \eqref{ineq:lambda1} that any eigenvalue $\lambda$ associated with a positive eigenfunction satisfies $\lambda \leq -\max(r_1,r_3)=-r_3$. Therefore, if we can show that an eigenfunction $\varphi_1$ of the form \eqref{eq:eigen_case2} with $\lambda_1=-r_3$ exists, then $\lambda_1$ satisfies the condition of Proposition~\ref{prop:eigen}, hence it is the generalized principal eigenvalue of \eqref{eq:eigenproblem1} and $\varphi_1$ is a generalized principal eigenfunction.}


Fixing without loss of generality $C_1=1$ ({\it i.e.}, $\varphi_1(0)=1$) and rearranging terms as in the previous case, we end up with:
\begin{equation}
\begin{cases}
C_2=\frac{1}{\sin C_3},\\
\cot C_3 = \sqrt{\frac{r_3-r_1}{r_2-r_3}}.\\
C_4=\frac{\sqrt{r_2-r_3}\cos(L\sqrt{r_2-r_3}+C_3)}{\sin C_3},\\
C_5=\frac{\sin(L\sqrt{r_2-r_3}+C_3)}{\sin C_3},\\
0<C_3<L\sqrt{r_2-r_3}+C_3\leq\pi/2.
\end{cases}
\end{equation}
The value of $C_3\in(0,\pi/2)$ is uniquely defined.
It only remains to verify that $L\sqrt{r_2-r_3}+C_3\leq\pi/2$. By assumption, $L\leq\underline{L}<\frac{\pi}{\sqrt{r_2-r_3}}$. Moreover, $\pi/2-C_3\in(0,\pi/2)$. 
Hence both $L\sqrt{r_2-r_3}$ and $\pi/2-C_3$ are in $(0,\pi)$, where the function $\cot$ is well-defined and decreasing, and 
therefore $L\sqrt{r_2-r_3}+C_3\leq\pi/2$ is equivalent to 
\begin{equation}
\cot(L\sqrt{r_2-r_3})\geq\cot\left(\frac{\pi}{2}-C_3\right)=\frac{1}{\cot C_3}=\sqrt{\frac{r_2-r_3}{r_3-r_1}},
\end{equation}
where classical trigonometric identities have been used. This inequality is true by virtue of the assumption
$L\leq\underline{L}$. Note that in the equality case, $C_4=0$.

\subsubsection{Construction of the generalized principal eigenpair in the case $r_1>r_3$ and $L\leq\underline{L}$}
The construction in the third case is done exactly as in the second case, interchanging the roles of $r_1$ and $r_3$.

\subsubsection{Important properties of the generalized principal eigenpairs}\label{sec:important_prop}
To summarize, the elliptic problem~\eqref{eq:eigenproblem1} admits a generalized principal eigenvalue $\lambda_1$ characterized in Proposition~\ref{prop:eigen} and computed in Lemma~\ref{lem:eigen}. The eigenpair $(\lambda_1 ,  \varphi_1)$ also satisfies the following:
\begin{enumerate}
\item if $L>\underline{L}$ (which in particular is always the case when $r_1= r_3$), then $\lambda_1\in(-r_2,-\max(r_1,r_3))$, $\varphi_1\in L^\infty(\R)$, $\varphi_1'(0)=L\sqrt{-r_1-\lambda_1}>0$, $\varphi_1$ decays exponentially at $\pm\infty$;
\item if $r_1<r_3$ and $L\leq\underline{L}$, then $\lambda_1=-r_3$, $\varphi_1'(0)=L\sqrt{r_3-r_1}>0$, $\varphi_1$ grows linearly at $+\infty$ and decays exponentially at $-\infty$;
\item if $r_1>r_3$ and $L\leq\underline{L}$, then $\lambda_1=-r_1$, $\varphi_1'(0) \leq 0$, $\varphi_1$ grows linearly at $-\infty$ and decays exponentially at $+\infty$;
\item the definition of $\lambda_1$ when $L>\underline{L}$ implies, by virtue of 
the implicit function theorem, that $(L,r_1,r_2,r_3)\mapsto\lambda_1$ is smooth in the parameter set $\{L>\underline{L}\}$;
the definition of $\lambda_1$ when $L\leq\underline{L}$ implies that $(L,r_1,r_2,r_3)\mapsto\lambda_1$ is also smooth in the parameter set $\{L\leq\underline{L}, r_1<r_3\}$ and in the parameter set 
$\{L\leq\underline{L}, r_1>r_3\}$; it is therefore at least 
a piecewise-smooth function globally, with at this point an unclear regularity at the interface $\{L=\underline{L}\}$.
\end{enumerate}

\subsection{Proof of Theorem \ref{thm:constant_case}}\label{sec:proof_main}
In this section, for the sake of brevity and clarity, $c^\star$ is defined by anticipation as the quantity on the right-hand side of~\eqref{eq:speed_formula} in Theorem~\ref{thm:constant_case}, \textit{i.e.},
\begin{equation}\label{eq:c*formula}
c^\star =
\begin{cases}
2\sqrt{r_3} & \text{if }c_A<2\sqrt{r_3}, \\
c_A & \text{if }2\sqrt{r_3}\leq c_A\leq 2\sqrt{-\lambda_1} ,\\
F(c_A) & \text{if }2\sqrt{-\lambda_1}<c_A<2\sqrt{r_1}+2\sqrt{-\lambda_1-r_1},\\
2\sqrt{r_1} & \text{if }2\sqrt{r_1}+2\sqrt{-\lambda_1-r_1}\leq c_A
\end{cases}
\end{equation}
where the value of $\lambda_1$ is given in Lemma \ref{lem:eigen} and $F$ is defined by \eqref{eq:def_F}. 
Consequently, $c^\star$ is not, at this point, a notation for a spreading speed. With
this slight abuse of notation, the proof of Theorem \ref{thm:constant_case} reduces to the proof of two inequalities: 
$\overline{c}\leq c^\star$ and $\underline{c}\geq c^\star$.

The proof will be done in four steps:
\begin{enumerate}
    \item[(Step 1)] construct a family of super-solutions showing that $\overline{c}\leq c^\star$ if $c_A\leq 2\sqrt{-\lambda_1}$;
    \item[(Step 2)] construct a family of super-solutions showing that $\overline{c}\leq c^\star$ if $c_A>2\sqrt{-\lambda_1}$;
    \item[(Step 3)] construct a family of sub-solutions showing that $\underline{c}\geq c^\star$ if $c_A\leq 2\sqrt{\max(r_1,r_3)}$ or $c_A\geq 2\sqrt{r_1}+2\sqrt{-\lambda_1-r_1})$;
    \item[(Step 4)] construct a family of sub-solutions showing that $\underline{c}\geq c^\star$ if finally $c_A\in(2\sqrt{\max(r_1,r_3)},2\sqrt{r_1}+2\sqrt{-\lambda_1 - r_1})$.
\end{enumerate}
Bringing all four steps together will immediately end the proof. We note that the open interval $(2\sqrt{\max(r_1,r_3)},2\sqrt{r_1}+2\sqrt{-\lambda_1 - r_1})$ is empty if, and only if, $r_3 < r_1=-\lambda_1$. In such a case, the proof ends at Step 3.

\begin{proof}[Step 1] 
Assume $c_A\leq 2\sqrt{-\lambda_1}$. Define
\begin{equation}
\overline{u}(t,x)=2\min\left(1,\upe^{-\lambda(\max(2\sqrt{r_3},c_A))(x-\max(2\sqrt{r_3},c_A)t-L)}\right),
\end{equation}
where $\lambda(c)=\frac12(c-\sqrt{c^2-4r_3})$. 
This function $\overline{u}(t,x)$ decays exponentially as $x\to +\infty$ and its front propagates at the speed $\max(2\sqrt{r_3},c_A)$. 
Let us show that $\overline{u}$ is a super-solution of \eqref{eq:main}. Note first that $\overline{u}(t,x)$ satisfies the inequality
\begin{equation}\label{ubar-inequality}
\partial_t\overline{u}-\partial_{xx}\overline{u}\geq f(t,x,\overline{u}(t,x))
\end{equation}
in $\{x-\max(2\sqrt{r_3},c_A)t<L\}\cup\{x-\max(2\sqrt{r_3},c_A)t>L\}$. 
Indeed, if $x-\max(2\sqrt{r_3},c_A)t<L$, then since $\overline{u}=2>1$, we have
\[
\partial_t\overline{u}-\partial_{xx}\overline{u}=0> f(t,x,\overline{u}(t,x)),
\]
while, if  $x-\max(2\sqrt{r_3},c_A)t>L$, then since $\lambda(c)(c-\lambda(c))=r_3$, we have
\[
\partial_t\overline{u}-\partial_{xx}\overline{u}=r_3\overline{u}=r(t,x)\overline{u}\geq f(t,x,\overline{u}(t,x)).
\]
Note also that the ``correct angle condition'' ({\it i.e.}, negative derivative gap)  at $x-\max(2\sqrt{r_3},c_A)t=L$ is clearly satisfied. Therefore $\overline{u}$ is a super-solution of \eqref{eq:main}.

Note that $C\overline{u}$ is also a super-solution of \eqref{eq:main} for any $C\geq 1$. Indeed, if $x-\max(2\sqrt{r_3},c_A)t<L$, then since $C\overline{u}=2C>1$, we have
\[
\partial_t\left(C\overline{u}\right)-\partial_{xx}\left(C\overline{u}\right)=0> f(t,x,C\overline{u}(t,x)),
\]
while, if  $x-\max(2\sqrt{r_3},c_A)t>L$, then  
\[
\partial_t\left(C\overline{u}\right)-\partial_{xx}\left(C\overline{u}\right)=r(t,x)\left(C\overline{u}\right)\geq f(t,x,C\overline{u}(t,x))
\]
by the assumption \eqref{ass:KPP}. Also the correct angle condition at $x= \max(2\sqrt{r_3},c_A)t+L$ clearly remains to hold after multiplication by a positive constant $C$.
Now choose~$C$ sufficiently large so that $C\overline{u}$ is initially above the initial condition $u_0$. Then $u(t,x)\leq C\overline{u}(t,x)$ for all $t\geq 0, x\in\R$. 

Recall that $\overline{u}$ decays exponentially as $x\to +\infty$ and its front propagates at the speed $\max(2\sqrt{r_3},c_A)$. As we are assuming $c_A\leq 2\sqrt{-\lambda_1}$, we have $c^\star = \max (2 \sqrt{r_3}, c_A )$.  Hence the inequality $u(t,x)\leq C\overline{u}(t,x)$ proves that~$\overline{c}\leq c^\star$.
\end{proof}

\begin{proof}[Step 2] Assume now that $c_A > 2 \sqrt{-\lambda_1}$, so that in particular $c_A > 2 \sqrt{r_1}$ by \eqref{ineq:lambda1}. Now, following \cite{Girardin_Lam}, we consider the following continuous function:
\begin{equation}\label{super_step2}
\overline{u}(t,x)=
\begin{cases}
2 & \text{if }x\leq ct-\frac{\ln 2}{\lambda(c)} , \\
\upe^{-\lambda(c)(x-ct)} & \text{if }x\in\left(ct-\frac{\ln 2}{\lambda(c)},c_At\right) ,\\
\upe^{-\lambda(c)(c_A-c)t}\upe^{-\frac{c_A(x-c_At)}{2}}\varphi_1\left(\frac{x-c_At}{L}\right) & \text{if }x\geq c_At ,
\end{cases}
\end{equation}
where $c$ is a constant satisfying $2 \sqrt{r_1} \leq c<c_A$, whose value will be specified later, and in this step
$\lambda(c)=\frac12(c-\sqrt{c^2-4r_1})$. The generalized principal eigenfunction $\varphi_1$, which is constructed as in Section~\ref{sec:construc_eigenfct}, 
is appropriately normalized so that $\varphi_1(0)=1$. Since $\varphi_1$ is in both cases $\lambda_1=-r_3$ or $\lambda_1>-r_3$ bounded above by a 
linear function as $x\to+\infty$, it is clear that $\overline{u}$ decays to $0$ as $x\to +\infty$
and that the position of its front propagates at the speed $c$. Let us show that $\overline{u}$ is a super-solution of \eqref{eq:main}. 

First, in $\{x<ct-\ln 2/\lambda(c)\}$, $\overline{u}$ satisfies, by virtue of the assumption \eqref{ass:KPP}:
\begin{equation}
\partial_t \overline{u}-\partial_{xx}\overline{u} = 0\geq f(t,x,\overline{u}(t,x)).
\end{equation}
In $\{ct-\ln 2/\lambda(c)<x<c_At\}$, $\overline{u}$ satisfies:
\begin{equation}
\partial_t \overline{u}-\partial_{xx}\overline{u} = r_1\overline{u}(t,x)\geq f(t,x,\overline{u}(t,x)).
\end{equation}
In $\{x>c_At\}$, it is more convenient to change the viewpoint by defining
\begin{align}
\overline{v}(t,y)=\overline{u}\left(t,L y +c_At\right)\sqrt{L}\upe^{\frac{c_A^2 t}{4} +\frac{c_A L y}{2}}=\sqrt{L}\upe^{-\lambda(c)(c_A-c)t}\upe^{\frac{c_A^2t}{4}}\varphi_1\left(y\right). \nonumber
\end{align}
It satisfies by construction
\begin{equation}
\partial_t\overline{v}-\frac{1}{L^2}\partial_{yy}\overline{v}-m\overline{v} = \left(-\lambda(c)(c_A-c)+\frac{c_A^2}{4}\right)\overline{v}+\lambda_1\overline{v}. 
\end{equation}
Assuming $\lambda_1-\lambda(c)(c_A-c)+\frac{c_A^2}{4}\geq 0$, we obtain $\partial_t\overline{v}-\frac{1}{L^2}\partial_{yy}\overline{v}-m\overline{v}\geq 0$, which gives (back to the original variables)
\begin{equation}
\partial_t\overline{u}-\partial_{xx}\overline{u}\geq r(t,x)\overline{u}(t,x)\geq f(t,x,\overline{u}(t,x)).
\end{equation}
Summarizing, under the assumption that $\lambda_1-\lambda(c)(c_A-c)+\frac{c_A^2}{4}\geq 0$, $\overline{u}$ satisfies
\[
\partial_t \overline{u}-\partial_{xx}\overline{u} \geq f(t,x,\overline{u}(t,x))\quad 
\]
on $\{x<ct-\ln 2/\lambda(c)\}\cup\{ct-\ln 2/\lambda(c)<x<c_At\}\cup\{c_At<x \}$.

In order for $\overline{u}$ to be a super-solution, we also need to verify the angle conditions  ({\it i.e.}, negative derivative gap) at $x=ct-\ln 2/\lambda(c)$ and at $x=c_A t$. The former is easy to check by the definition of $\overline{u}$ in \eqref{super_step2}, so we only need to check
\begin{equation}
\lim_{x-c_At\to 0^-}\partial_x \overline{u}(t,x)\geq\lim_{x-c_At\to 0^+}\partial_x \overline{u}(t,x),
\end{equation}
which reads:
\begin{equation}
-\lambda(c)\geq-\frac{c_A}{2}+\frac{\varphi_1'(0)}{L},
\end{equation}
or equivalently
\begin{equation}
\frac{\varphi_1'(0)}{L}\leq\frac{c_A-c+\sqrt{c^2-4r_1}}{2}.
\end{equation}
Thus, for $\overline{u}$ to be a super-solution, it suffices to verify the following two conditions:
\begin{align}
\frac{\varphi_1'(0)}{L}\leq\frac{c_A-c+\sqrt{c^2-4r_1}}{2}, \nonumber\\
\lambda_1-\lambda(c)(c_A-c)+\frac{c_A^2}{4}\geq 0. \nonumber
\end{align}

Using the properties summarized in Section~\ref{sec:important_prop}, we distinguish two cases:
\begin{enumerate}
\item if $\lambda_1=-r_1$, then by construction of $\varphi_1$, $\varphi_1'(0)\leq 0$,
whence $\varphi_1'(0)/L\leq 0<(c_A-c+\sqrt{c^2-4r_1})/2$, so that
the first condition is always satisfied;
\item if $\lambda_1\neq -r_1$, then by construction of $\varphi_1$, 
$\varphi_1'(0)/L=\sqrt{-r_1-\lambda_1}$ and the first condition rewrites as
\begin{equation}\label{cA-first}
\sqrt{-r_1 - \lambda_1} \leq \frac{c_A}{2} - \lambda (c).
\end{equation}
Squaring both sides, and using that $\left(\frac{c_A}{2}-\lambda(c)\right)^2=\frac{c_A^2}{4}-r_1-(c_A-c)\lambda(c)$ thanks to the equality $\lambda(c)^2-c\lambda(c)+r_1=0$, we find that the first condition implies the second one.
\end{enumerate}
In all cases, in order to show that $\overline{u}$ is a super-solution, all we have to verify is \eqref{cA-first}, or equivalently
\begin{equation}\label{new_cond}
\sqrt{c^2 - 4 r_1} \geq c - c_A   + 2 \sqrt{-\lambda_1 - r_1}.
\end{equation}

Let us first consider the subcase when $c_A \geq 2 \sqrt{r_1} +  2 \sqrt{-\lambda_1 - r_1}$. Then the above inequality is automatically satisfied with $c = 2 \sqrt{r_1}$. 
Thus $\overline{u}$ is a super-solution if we choose  $c = 2 \sqrt{r_1}$. As in the previous case, $C\overline{u}$ is also a super-solution for any~$C>1$. Choose $C$ appropriately large so that $C\overline{u}(0,t)\geq u_0(x)$. Then $u(t,x)\leq C\overline{u}(t,x)$ for all $t\geq 0, x\in\R$. Since $\overline{u}$ decays to $0$ as $x\to +\infty$ and propagates at the speed $c=2\sqrt{r_1}$, the inequality $u(t,x)\leq C\overline{u}(t,x)$ proves that~$\overline{c}\leq 2 \sqrt{r_1}$, where $2 \sqrt{r_1}$ precisely coincides with $c^\star$ in this subcase. Hence $\overline{c}\leq c^\star$ in this subcase.

Next assume that $c_A < 2 \sqrt{r_1} + 2 \sqrt{-\lambda_1 - r_1}$. 
Since $c \geq 2 \sqrt{r_1}$, we have $c - c_A   + 2 \sqrt{-\lambda_1 - r_1}>0$. Hence 
we can square both sides of~\eqref{new_cond} and rewrite it as
\begin{equation}
c\geq \frac12\left(c_A-2\sqrt{-\lambda_1-r_1}+\frac{4r_1}{c_A-2\sqrt{-\lambda_1-r_1}}\right)=F(c_A)
\end{equation}
where the function $F$ is the one defined in \eqref{eq:def_F}. It is a smooth and decreasing function. It maps the interval 
$\left(2\sqrt{-\lambda_1-r_1},2\sqrt{r_1}+2\sqrt{-\lambda_1-r_1}\right]$ onto the interval
$[2\sqrt{r_1},+\infty)$. Moreover its unique fixed point is $F(2\sqrt{-\lambda_1})=2\sqrt{-\lambda_1}$. Therefore
it satisfies $2\sqrt{r_1}\leq F(c')<c'$ for any $c'\in\left(2\sqrt{-\lambda_1},2\sqrt{r_1}+2\sqrt{-\lambda_1-r_1}\right]$. Notice also that $c^\star$ coincides with~$F (c_A)$ in this same interval.

Finally, the partial differential inequality and the angle condition are again satisfied whenever 
$2\sqrt{-\lambda_1} < c_A < 2 \sqrt{r_1} + 2 \sqrt{-\lambda_1 -r_1}$ and for any $c\in [F(c_A), c_A)$, this last interval
being nonempty. Therefore $\overline{u}$ is a super-solution in this case. Arguing precisely as in the previous case, we see that $C\overline{u}$ is also a super-solution for any $C\geq 1$. By choosing $C$ sufficiently large, we have $u(t,x)\leq C\overline{u}(t,x)$ as in the previous case, which proves that $\overline{c}\leq c^\star$ if $c_A> 2\sqrt{-\lambda_1}$.
\end{proof}

Before we proceed, for later use we summarize our analysis in this step in the form of the following lemma, which is presented in a slightly generalized setting:

\begin{lem}\label{lem:super0}
Assume $c_A>2\sqrt{-\lambda_1}$ and that $A(t) = c_A t$ for $t\in[0,T)$ for some $0<T\leq +\infty$. 
Let~$\overline{u}$ be the function defined by~\eqref{super_step2}, with~$c = c^\star$ from~\eqref{eq:c*formula}. Then for any constant $C\geq 1$, $C\overline{u}$ is a super-solution of \eqref{eq:main} 
for $t \in [0,T)$ and $x \in \mathbb{R}$.
\end{lem}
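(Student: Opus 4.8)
The plan is to observe that Lemma~\ref{lem:super0} merely isolates, in a form that is local in time, the super-solution already constructed in Step~2: one only has to check that the specific choice $c=c^\star$ is admissible and that every verification carried out in Step~2 uses the values of $A$ on $[0,T)$ alone.

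First I would recall the structure of Step~2. Under the standing assumption $c_A>2\sqrt{-\lambda_1}$ (which forces $c_A>2\sqrt{r_1}$ by \eqref{ineq:lambda1}), and for any constant $c\in[2\sqrt{r_1},c_A)$ satisfying \eqref{new_cond}, the continuous function $\overline{u}$ defined by \eqref{super_step2} satisfies $\partial_t\overline{u}-\partial_{xx}\overline{u}\geq f(t,x,\overline{u})$ on each of the three open regions $\{x<ct-\ln2/\lambda(c)\}$, $\{ct-\ln2/\lambda(c)<x<c_At\}$ and $\{x>c_At\}$, together with the two negative-derivative-gap conditions at the interfaces $x=ct-\ln2/\lambda(c)$ and $x=c_At$. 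The key point is that each of these computations — including the change of variables to $\overline{v}$ and the resulting requirement $\lambda_1-\lambda(c)(c_A-c)+c_A^2/4\geq0$ — involves only the slope $c_A$ and never the values of $A$ at later times. Consequently, if $A(t)=c_At$ merely for $t\in[0,T)$, all of these verifications remain valid for $(t,x)\in[0,T)\times\R$, so $\overline{u}$ is a super-solution of \eqref{eq:main} on that strip.

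Next I would confirm that $c=c^\star$ is an admissible value, i.e. $c^\star\in[2\sqrt{r_1},c_A)$ and $c^\star$ satisfies \eqref{new_cond}. If $c_A\geq 2\sqrt{r_1}+2\sqrt{-\lambda_1-r_1}$, then $c^\star=2\sqrt{r_1}$, which lies in $[2\sqrt{r_1},c_A)$ since $c_A>2\sqrt{r_1}$, and \eqref{new_cond} reads $0\geq 2\sqrt{r_1}-c_A+2\sqrt{-\lambda_1-r_1}$, which holds by assumption. If instead $2\sqrt{-\lambda_1}<c_A<2\sqrt{r_1}+2\sqrt{-\lambda_1-r_1}$, then $c^\star=F(c_A)$; the monotonicity and fixed-point properties of $F$ recorded in Step~2 give $2\sqrt{r_1}\leq F(c_A)<c_A$, so $c^\star$ is admissible, and since in this subcase $c-c_A+2\sqrt{-\lambda_1-r_1}>0$ for every $c\geq2\sqrt{r_1}$, squaring shows \eqref{new_cond} is equivalent to $c\geq F(c_A)$, which $c=c^\star=F(c_A)$ satisfies with equality. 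In both cases $\overline{u}$ with $c=c^\star$ is a super-solution on $[0,T)\times\R$.

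Finally I would check that $C\overline{u}$ is still a super-solution for every $C\geq1$, exactly as at the end of Step~2: on $\{x\leq ct-\ln2/\lambda(c)\}$ one has $C\overline{u}=2C>1$, hence $\partial_t(C\overline{u})-\partial_{xx}(C\overline{u})=0>f(t,x,C\overline{u})$ by the last line of \eqref{ass:KPP}; on the other two regions $\partial_t(C\overline{u})-\partial_{xx}(C\overline{u})=r(t,x)\,C\overline{u}\geq f(t,x,C\overline{u})$ by the third line of \eqref{ass:KPP}; and the angle conditions are unaffected by multiplication by the positive constant $C$. There is no genuine obstacle here — the lemma is a corollary of Step~2 — so the only point requiring (minimal) care is the bookkeeping confirming that $c=c^\star$ falls within the admissible range and satisfies \eqref{new_cond}.
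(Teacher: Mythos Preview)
Your proposal is correct and follows exactly the paper's own approach: the paper presents Lemma~\ref{lem:super0} as a summary of Step~2 with no separate proof, and you have supplied precisely the bookkeeping (admissibility of $c=c^\star$ in each subcase, locality in $t$, and stability under multiplication by $C\geq1$) that the paper leaves implicit. One tiny slip: on the region $\{x>c_At\}$ the correct inequality is $\partial_t(C\overline{u})-\partial_{xx}(C\overline{u})\geq r(t,x)\,C\overline{u}$ rather than equality, but this does not affect the argument.
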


\begin{proof}[Step 3]
Define $\tilde{r}(t,x)$ by \eqref{ass:piecewise_constant_r} with $r_2$ replaced by $r_3$, 
and let $\tilde{f}(t,x,u)$ be a function satisfying \eqref{ass:KPP}, $\partial_u\tilde{f}(t,x,0)=\tilde{r}(t,x)$ and $\tilde{f}(t,x,u)\leq f(t,x,u)$. 
Denote by $\tilde{u}(t,x)$ the solution of \eqref{eq:main} with $f$ replaced by $\tilde{f}$ for the same initial data $u_0$. 
Then $\tilde{u}$ is a subsolution of the original \eqref{eq:main}, therefore $\tilde{u}(t,x)\leq u(t,x)$ for all $t\geq 0$, $x\in\R$. 
Thus the speed of $\tilde{u}$ gives a lower bound for the speed of $u$.
If $r_1=r_3$, then $\tilde{r}(t,x)$ is identical to $r_1$, therefore the spreading speed of $\tilde{u}$ is clearly $2\sqrt{r_1}$. 
On the other hand, if $r_1\ne r_3$, then $\tilde{r}(t,x)$ satisfies the assumption of Theorem~\ref{thm:one_interface}, therefore 
the spreading speed of $\tilde {u}$ is again well understood. 
Note that the assertions of Theorem~\ref{thm:one_interface} remain valid even in the case $r_1=r_3$. 
Indeed, if $r_1=r_3$, the cases (a) (ii) and (b) (ii) become empty and all other cases give the value $c^\star=2\sqrt{r_1}=2\sqrt{r_3}$. 
Therefore, in what follows we do not deal with the case $r_1=r_3$ separately. 

First, we consider the case where $c_A \leq 2 \sqrt{\max (r_1,r_3)}$.  
If $r_1\leq  r_3$, then by (b) (i) of Theorem~\ref{thm:one_interface}, the speed of $\tilde{u}$ is $2\sqrt{r_3}$, which coincides with the value of $c^\star$ in \eqref{eq:c*formula}. On the other hand, if $r_1>r_3$, then (a) (i), (ii) of Theorem~\ref{thm:one_interface} apply, which show that the speed of $\tilde{u}$ equals $\max ( 2 \sqrt{r_3}, c_A)$. 
Again this value coincides with $c^\star$ in~\eqref{eq:c*formula}, since $2\sqrt{\max(r_1,r_3)} \leq 2\sqrt{-\lambda_1}$ by virtue of \eqref{ineq:lambda1}. 
Thus we get in this case that $\underline{c} \geq c^\star$.

Next, in the case where $c_A \geq 2 \sqrt{r_1 }+2 \sqrt{-\lambda_1 - r_1}$, then by \eqref{ineq:lambda1} we have $c_A \geq 2\sqrt{r_1} + 2 \sqrt{ \max(r_3 - r_1 ,0)}$. 
Hence, by Theorem~\ref{thm:one_interface} (a) (iii) and (b) (iii), the spreading speed of $\tilde{u}$ is $2 \sqrt{r_1}$. This gives 
$\underline{c} \geq 2 \sqrt{r_1}$, which again coincides with $c^\star$.

To summarize this step, we have found as announced that $\underline{c} \geq c^\star$ if either $c_A \leq 2 \sqrt{\max (r_1 ,r_3)}$ or $c_A \geq 2\sqrt{r_1} + 2 \sqrt{-\lambda_1 - r_1}$.
\end{proof}

\begin{proof}[Step 4] Here we assume that $c_A\in(2\sqrt{\max(r_1,r_3)},2\sqrt{r_1}+2\sqrt{-\lambda_1 - r_1})$. Therefore, throughout this step we have that $c^\star = \min \left(c_A, F (c_A) \right)> 2 \sqrt{r_1}$, where $F$ is given by \eqref{eq:def_F}. 
We refer to Step~2 above for related computations on the function~$F$, including the fact that $F (c') > c'$ for any $c'<2\sqrt{-\lambda_1}$.

Now we consider, for large values of $R>0$,
\begin{equation}
\begin{cases}
-L^{-2}\left(\varphi^R_1\right)''-m\varphi_1^R=\lambda_1^R\varphi_1^R & \text{in }(-R,R), \\
\varphi_1^R(\pm R)=0, & \\
\varphi_1^R>0 & \text{in }(-R,R), \\
\varphi_1^R\in W^{2,1}((-R,R)). & 
\end{cases}
\end{equation}
By the Krein--Rutman theorem, 
the above problem possesses the principal eigenpair $(\lambda_1^R, \varphi_1^R)$, with $\varphi_1^R>0$, which is unique up to multiplication of $\varphi_1^R$ by a constant.
Moreover, according to~\cite{Berestycki_Ros} (see also Proposition~\ref{prop:eigen} above), we have that
$$\lambda_1=\lim_{R\to+\infty}\lambda_1^R.$$
Below, we extend $\varphi_1^R$ in $\R$ by setting $\varphi_1^R=0$ in $\R\backslash(-R,R)$. We consider the following continuous function:
\begin{equation}\label{subsolstep4}
\underline{u}(t,x)=
\begin{cases}
0 & \text{if }x<\frac{\ln S}{\eta}+x_0-2R', \\[3pt]
\iota O(x) \hspace{-85pt} &  \text{if }x\in\left[\frac{\ln S}{\eta}+x_0-2R',\frac{\ln S}{\eta}+x_0-\frac{R'}{3}\right), \\[3pt]
\iota\frac{\sigma}{2} & \text{if }x\in\left[\frac{\ln S}{\eta}+x_0-\frac{R'}{3},\frac{\ln S}{\eta}+x_0+ct+x_1\right), \\[3pt]
\iota P(t,x) & \text{if }x\in\left[\frac{\ln S}{\eta}+x_0+ct+x_1,X(t)\right), \\[3pt]
\iota Q(t,x) & \text{if }x\geq X(t),
\end{cases}
\end{equation}
where
\begin{equation}\label{eq:OPQ}
\begin{split}
O(x) & = \sigma\sin\left(\frac{\pi}{2R'}\left(x-\frac{\ln S}{\eta}-x_0+2R'\right)\right), \\
P(t,x) & = \upe^{-\lambda(c)(x-ct-x_0)}-S\upe^{-(\lambda(c)+\eta)(x-ct-x_0)}, \\
Q(t,x) & = \gamma \upe^{-\lambda(c)(c_A-c)t}\upe^{-\frac{c_A(x-c_At)}{2}}\varphi_1^R\left(\frac{x-c_At}{L}\right), 
\end{split}
\end{equation}
with $\eta>0$, $S>0$, $R'>\frac{\pi}{2\sqrt{r_1}}$, $\iota\in(0,1]$, 
$\sigma\in\left(0,2\max_{x\in\R}P(0,x)\right)$, where $\lambda(c)=\frac12(c-\sqrt{c^2-4r_1})$ and  $c\in(2\sqrt{r_1},c_A)$, which is a nonempty interval here, $x_0\in\R$, $\gamma>0$, $R>1$, 
with the principal eigenfunction $\varphi_1^R$ appropriately normalized so that $\varphi_1^R(0)=1$. 
The values of $x_1$, $X(t)$ are chosen to make $\underline{u}$ continuous at $x=\frac{\ln S}{\eta}+x_0+ct+x_1$ and at $x=X(t)$. More precisely:
\begin{itemize}
    \item $x_1>0$ is the well-defined smallest zero of $x\mapsto\frac{\sigma}{2}-P\left(0,x+x_0+\frac{\ln S}{\eta}\right)$ (note that it satisfies indeed, for
    any $t\geq 0$, $\frac{\sigma}{2}=P\left(t,\frac{\ln S}{\eta}+x_0+ct+x_1\right)$ and note moreover that $x_1\to0$ as $\sigma\to 0$);
    \item $X(t)\in(c_At-RL,c_At)$ is, for each $t\geq 0$, the smallest zero of $x\mapsto P(t,x)-Q(t,x)$.
\end{itemize}
The values of $\iota$, $\sigma$, $\eta$, $S$, $x_0$, $R$ and $\gamma$ will be specified later on. 
Typically, $1/\iota$, $1/\sigma$, $1/\eta$, $1/\gamma$, $S$, $-x_0$ and $R$ are large positive numbers.  We will also check later that $c$ can be chosen arbitrarily close to $c^\star$.

The existence and uniqueness of $X(t)$, provided the parameters are appropriately chosen, will be verified in Lemma~
\ref{lem:interface_sub-solution}. In particular, we point out that $r(t,x)=r_1$ if $x\leq X(t)$ due to $X(t) \leq c_A t$. Lemma~\ref{lem:interface_sub-solution} will also establish that $\frac{\ln S}{\eta}+x_0 < \inf_{t \geq 0} X(t) - ct$. Then, choosing $\sigma$ small enough so that $\frac{\ln S}{\eta} + x_0 + x_1 \leq \inf_{t \geq 0} X(t) -ct$, we get that each interval in the above definition of $\underline{u}$ is nonempty, which in turn means that $\underline{u}$ is continuous.

Note that, for each $t\geq 0$, the support of $\underline{u}(t,x)$ is the interval $[\frac{\ln S}{\eta}+x_0-2R', c_At+RL]$, and that the main front of $\underline{u}(t,x)$, which is represented by the above function $P(t,x)$, propagates at the speed $c$.
Behind the front, $\underline{u}(t,x)=\iota\sigma/2$ except near the left endpoint of its support. Moreover, $\underline{u}$ is globally
proportional to $\iota$ and this parameter can be interpreted as an amplitude parameter.

In order for $\underline{u}$ to be a potential sub-solution, first the following angle condition 
({\it i.e.}, positive derivative gap) needs to be satisfied:
\begin{equation}
\lim_{x\to X(t)^-}\partial_x \underline{u}(t,x)\leq\lim_{x\to X(t)^+}\partial_x \underline{u}(t,x).
\end{equation}
This will be a consequence of Lemma~\ref{lem:interface_sub-solution} below, whose first two statements imply
\begin{equation}\label{angle}
\lim_{x \to X(t)^-} \partial_x \underline{u} (t,x) < 0 < \lim_{x \to X(t)^+} \partial_x \underline{u} (t,x).
\end{equation}
We point out that the other angle conditions at the interfaces of the other subintervals are more straightforward, hence we skip their proof for the sake of brevity.

Next, we check that $\underline{u}$ satifies the desired differential inequalities on each subdomain. First, in the region $\left\{x<\frac{\ln S}{\eta}+x_0-2R'\right\}$, $\underline{u}$ is identical to $0$, which directly yields $\partial_t\underline{u}-\partial_{xx}\underline{u}\leq f(t,x,\underline{u}(t,x))$.

In $\left\{x\in\left[\frac{\ln S}{\eta}+x_0-2R',\frac{\ln S}{\eta}+x_0-\frac{R'}{3}\right)\right\}$, the function~$\underline{u}$ satisfies:
\begin{equation}
 \partial_t\underline{u}-\partial_{xx}\underline{u}
=\frac{\pi^2}{4R'^2}\underline{u}.
\end{equation}
Recall that $\frac{\ln S}{\eta} + x_0 - \frac{R'}{3} < 0 \leq c_A t$ for $t \geq 0$ by Lemma~\ref{lem:interface_sub-solution}, and thus we have that $\partial_u f (t,x,0) = r_1$ here. By virtue of~\eqref{ass:KPP} and $\frac{\pi^2}{4R'^2}<r_1$, we can assume that $\sigma$ is so small that
\begin{equation}
	f(t,x,v) \geq \frac{\pi^2}{4 R'^2} v,
\end{equation}
for any $x\leq \frac{\ln S}{\eta}+x_0-\frac{R'}{3}$ and $v \in [0, \sigma]$. Remarking that $\underline{u}\leq\iota\sigma\leq\sigma$ in  $\left\{ x \in\left[\frac{\ln S}{\eta}+x_0-2R',\frac{\ln S}{\eta}+x_0-\frac{R'}{3}\right) \right\}$ (by virtue of $\iota\leq 1$), we deduce:
\begin{equation}
    \partial_t\underline{u}-\partial_{xx}\underline{u}\leq f(t,x,\underline{u}(t,x)).
\end{equation}

In $\left\{x\in\left[\frac{\ln S}{\eta}+x_0-\frac{R'}{3},\frac{\ln S}{\eta}+x_0+ct+x_1\right)\right\}$, again and quite similarly,
\begin{equation}
    \partial_t\underline{u}-\partial_{xx}\underline{u} =0 \leq f(t,x,\underline{u}(t,x)),
\end{equation}
provided $\sigma$ is small enough, uniformly in $\iota\leq 1$.

Next, in $\left\{\frac{\ln S}{\eta}+x_0+ct+x_1<x<X(t),\ t>0\right\}$, the function~$\underline{u}$ satisfies:
\begin{equation}
\begin{aligned}
& \qquad \partial_t\underline{u}-\partial_{xx}\underline{u} \\
& =\iota r_1\upe^{-\lambda(c)(x-ct-x_0)}-\iota S\left(r_1-\eta^2+\eta\sqrt{c^2-4r_1}\right)\times \upe^{-(\lambda(c)+\eta)(x-ct-x_0)} \\
& = r_1\underline{u}(t,x)-\iota S\eta \left(\sqrt{c^2-4r_1}-\eta\right) \times \upe^{-(\lambda(c)+\eta)(x-ct-x_0)} \\
& \leq \underline{u}(t,x)\upe^{-\eta(x-ct-x_0)}\left(\left(r_1-\frac{f(t,x,\underline{u}(t,x))}{\underline{u}(t,x)}\right)\upe^{\eta(x-ct-x_0)}-S\eta\left(\sqrt{c^2-4r_1}-\eta\right)\right) \\
& \quad +f(t,x,\underline{u}(t,x)).
\end{aligned}
\end{equation}
We claim that, denoting $g(t,x,u)=r_1-f(t,x,u)/u$, the function
\begin{equation}\label{claim:negative}
(t,x)\mapsto g(t,x,\underline{u}(t,x))\upe^{\eta(x-ct-x_0)} - S \eta \left( \sqrt{c^2 - 4 r_1 } - \eta \right)
\end{equation}
is negative in~$\left\{\frac{\ln S}{\eta}+x_0+ct<x<X(t),\ t>0\right\}$ for $\eta$ small enough and $S$ large enough (depending on $\eta$). If this claim holds true, then $\underline{u}$ indeed satisfies the desired differential inequality in this subdomain.

Let us therefore verify this claim. First recall that $X(t) < c_A t$. Then, by the assumption~\eqref{ass:KPP},
\begin{equation}
r_1 u \geq f(t,x,u) \geq r_1 u - M u^2\quad\text{for all }(t,x)\in\{x<c_At\}\text{ and all }u\geq 0,
\end{equation}
so that
\begin{equation}
0 \leq g(t,x,\underline{u}) \leq M \underline{u} \quad \text{for all } (t,x) \in \{x < c_A t\},
\end{equation}
for some $M>0$. It follows that
\begin{align*}
 & g(t,x,\underline{u}(t,x))\upe^{\eta(x-ct-x_0)} - S \eta \left( \sqrt{c^2 - 4 r_1 } - \eta \right) \\
& \quad \leq M \underline{u} (t,x) \upe^{\eta(x-ct-x_0)} - S \eta \left( \sqrt{c^2 - 4 r_1 } - \eta\right) \\
& \quad \leq M \iota \upe^{-(\lambda(c) - \eta) (x -ct-x_0)}  - S \eta \left(\sqrt{c^2 - 4 r_1} - \eta \right)\\
& 
\quad \leq M \iota \upe^{-(\lambda(c) - \eta) \ln S/\eta}  - S \eta \left(\sqrt{c^2 - 4 r_1} - \eta \right)
\end{align*}
in~$\left\{\frac{\ln S}{\eta}+x_0+ct<x<X(t),\ t>0\right\}$. Therefore, provided that $S>1$ and $\eta<\lambda(c)$, we get that
\begin{align*}
g(t,x,\underline{u}(t,x))\upe^{\eta(x-ct-x_0)} - S \eta ( \sqrt{c^2 - 4 r_1 } - \eta) 
& \leq S \left( M\iota S^{- \frac{\lambda(c)}{\eta}}  -  \eta (\sqrt{c^2 - 4 r_1} - \eta) \right) \\
& \leq S \left( M\iota S^{- 1}  -  \eta (\sqrt{c^2 - 4 r_1} - \eta) \right) .
\end{align*}
We deduce that if 
\begin{equation}
\eta<\min(\lambda(c),\sqrt{c^2-4r_1})\quad\text{and}\quad
S>\max\left(1,\frac{M}{\eta\left(\sqrt{c^2-4r_1}-\eta\right)}\right),
\end{equation}
then $\underline{u}$ is a sub-solution in $\left\{\frac{\ln S}{\eta}+x_0+ct<x<X(t),\ t>0\right\}$,
independently of the exact values of $\iota \in (0,1]$, $R>1$ and $x_0\in\R$.

Finally, for $\left\{x>X(t)\right\}$, we introduce a new function
\begin{equation}
\underline{v}(t,y) =\underline{u}(t,Ly+c_At)\sqrt{L}\upe^{\frac{c_A^2 t}{4} +\frac{c_A L y}{2}}
= \iota\gamma\sqrt{L}\upe^{-\lambda(c)(c_A-c)t}\upe^{\frac{c_A^2t}{4}}\varphi_1^R\left(y\right).
\end{equation}
It satisfies by construction
\begin{equation}
\partial_t\underline{v}-\frac{1}{L^2}\partial_{yy}\underline{v}-m\underline{v}=\left(-\lambda(c)(c_A-c)+\frac{c_A^2}{4}+\lambda_1^R\right)\underline{v}.
\end{equation}
Assume that
\begin{equation}\label{eq:necessary0}
\lambda_1-\lambda(c)(c_A-c)+\frac{c_A^2}{4}< 0,
\end{equation}
set $\delta=\frac12|\lambda_1-\lambda(c)(c_A-c)+\frac{c_A^2}{4}|$ and recall $\lim_{R\to+\infty}\lambda_1^R=\lambda_1$.
Assume now that~$R$ is so large that $\lambda_1^R-\lambda(c)(c_A-c)+\frac{c_A^2}{4}<-\delta$, whence
$\partial_t\underline{v}-\frac{1}{L^2}\partial_{yy}\underline{v}-m\underline{v}\leq -\delta\underline{v}$, 
which gives (back to the original variables)
\begin{equation}
\partial_t\underline{u}-\partial_{xx}\underline{u}\leq (r(t,x)-\delta)\underline{u}(t,x).
\end{equation}
Using again~\eqref{ass:KPP} which ensures the continuity of $(t,x,u)\mapsto f(t,x,u)/u$, and all other parameters being fixed, we can assume that $\gamma>0$ is so small that 
\begin{equation}
r(t,x)-\delta\leq \inf_{(t,x,v)\in\R\times\R\times \left[0,\gamma e^{\frac{c_A R L}{2}} \max \varphi_1^R \right]}\frac{f(t,x,v)}{v} .
\end{equation}
Remarking that $\underline{u}\leq \gamma e^{\frac{c_A R L}{2}} \max \varphi_1^R $ in $\left\{x>X(t)\right\}$ (by virtue of $\iota\leq1$ and $X(t) \geq c_A  t - R L$), we deduce:
\begin{equation}
\partial_t\underline{u}-\partial_{xx}\underline{u}\leq f(t,x,\underline{u}(t,x)).
\end{equation}
We observe that, $c<c_A \in(2\sqrt{\max(r_1,r_3)},2\sqrt{r_1}+2\sqrt{-\lambda_1 - r_1})$ being given, the above necessary condition~\eqref{eq:necessary0}, \textit{i.e.}, $\lambda_1-\lambda(c)(c_A-c)+\frac{c_A^2}{4}< 0$, is equivalent to $c<F(c_A)$. Finally, we have shown that the function defined in~\eqref{subsolstep4} is a sub-solution, and that its speed~$c$ can be chosen arbitrarily close to $\min \left( c_A , F (c_A) \right) $, which coincides with~$c^\star$ in this parameter range.\\

To conclude Step~4, it remains to prove important properties of $X(t)$.

\begin{lem}\label{lem:interface_sub-solution}
Let $P(t,x),Q(t,x)$ be as in \eqref{eq:OPQ}. 
For any positive values of $c,\eta,S,R,\gamma$ with $c<c_A$, there exists $x_0\in\R$ such that 
the equation $P(t,x)=Q(t,x)$, or more precisely, 
\begin{equation}\label{eq:interface_sub-solution}
\begin{split}
& \upe^{-\lambda(c)(x-ct-x_0)}-S\upe^{-(\lambda(c)+\eta)(x-ct-x_0)}\\
&\hspace{60pt} = \gamma\upe^{-\lambda(c)(c_A-c)t}\upe^{-\frac{c_A(x-c_At)}{2}}\varphi_1^R\left(\frac{x-c_At}{L}\right)
\end{split}
\end{equation}
admits for all $t\geq 0$ an isolated solution $X(t)\in\R$ such that:
\begin{enumerate}[label=(\alph*)]
\item $\partial_x P(t,X(t))<0$; 
\item $\partial_x Q(t,X(t))>0$;
\item $c_At-RL<X(t)<c_At$;
\item $\frac{\ln S}{\eta}< \inf_{ t \geq 0 } X(t)-ct-x_0$.
\end{enumerate}
Moreover, the trajectory $t\mapsto X$(t) satisfies:
\begin{enumerate}[label=(\alph*),start=5]
\item $X\in\mathcal{C}^1([0,+\infty),(-RL,+\infty))$;
\item $X(t)=c_At+O(1)$ as $t\to+\infty$.
\end{enumerate}
\end{lem}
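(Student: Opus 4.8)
The plan is to pass to the moving, rescaled spatial variable $\xi=(x-c_At)/L$, in which the profile of $Q$ becomes independent of time, and then to solve the crossing equation $P=Q$ as a uniform-in-$t$ perturbation of a fixed transversal crossing. Concretely, with $\lambda(c)\in(0,c/2)$, I would set
\[
p(t,\xi):=\upe^{\lambda(c)(c_A-c)t}P(t,c_At+L\xi),\qquad q(\xi):=\upe^{\lambda(c)(c_A-c)t}Q(t,c_At+L\xi)=\gamma\,\upe^{-c_AL\xi/2}\varphi_1^R(\xi).
\]
Then $q$ is independent of $t$, supported in $[-R,R]$ and positive on $(-R,R)$; since $P(t,c_At+L\xi)=\upe^{-\lambda(c)(c_A-c)t}p(t,\xi)$ and likewise for $Q$ with the same positive prefactor, the equation $P(t,x)=Q(t,x)$ is equivalent to $p(t,\xi)=q(\xi)$ with $x=c_At+L\xi$, and the signs of $\partial_xP,\partial_xQ$ at a crossing coincide with those of $\partial_\xi p,\partial_\xi q$. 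A short computation gives $q(\xi)-p(t,\xi)=\upe^{-\lambda(c)L\xi}\,\Psi(t,\xi)$ with
\[
\Psi(t,\xi):=\gamma\,\Phi(\xi)-\upe^{\lambda(c)x_0}\Bigl(1-S\,\upe^{-\eta[(c_A-c)t+L\xi-x_0]}\Bigr),\qquad\Phi(\xi):=\upe^{(\lambda(c)-c_A/2)L\xi}\varphi_1^R(\xi),
\]
so that the zeros of $P-Q$ are exactly those of $\Psi(t,\cdot)$, with matching sign of the $\xi$-derivative; the problem thus reduces to controlling the zeros of $\Psi(t,\cdot)$.

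Next I would record the features of $\varphi_1^R$ that are needed: near $\xi=-R$ one has $m\equiv r_1$, so there $\varphi_1^R$ solves a constant-coefficient linear ODE and is real-analytic, while, being the positive principal Dirichlet eigenfunction on $(-R,R)$ (the truncated problem of Step~4), it satisfies $(\varphi_1^R)'(-R)>0$ (Hopf lemma, or uniqueness for the Cauchy problem). Hence $\Phi'(-R)>0$ and $\partial_\xi q|_{\xi=-R}=\gamma\,\upe^{c_ALR/2}(\varphi_1^R)'(-R)>0$, so I may fix $\delta_0\in(0,R)$ so small that $\Phi'>0$ and $\partial_\xi q>0$ on $[-R,-R+\delta_0]$. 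The crucial point is that, \emph{because $c<c_A$}, for all $t\geq0$ and $\xi\geq-R$ one has $0\leq S\,\upe^{-\eta[(c_A-c)t+L\xi-x_0]}\leq S\,\upe^{\eta LR}\upe^{\eta x_0}$ and similarly for its $\xi$-derivative; since $\upe^{(\lambda(c)+\eta)x_0}=\upe^{\eta x_0}\upe^{\lambda(c)x_0}\ll\upe^{\lambda(c)x_0}$ as $x_0\to-\infty$, the entire $t$-dependent correction in $\Psi$ is negligible relative to $\upe^{\lambda(c)x_0}$, uniformly in $(t,\xi)\in[0,+\infty)\times[-R,R]$. I would therefore pick $x_0$ negative enough that, simultaneously: $\upe^{\lambda(c)x_0}<\gamma\,\Phi(-R+\delta_0)$; the factor $1-S\,\upe^{-\eta[\cdots]}$ stays in $[\tfrac12,1]$; $\partial_\xi\Psi(t,\cdot)>0$ on $[-R,-R+\delta_0]$; $x_0\leq-RL-\tfrac{\ln S}{\eta}-1$; and $S\,\upe^{\eta LR}\upe^{\eta x_0}<\lambda(c)/(\lambda(c)+\eta)$. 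These are finitely many ``$x_0$ sufficiently negative'' conditions, hence compatible.

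With such an $x_0$, on $[-R,-R+\delta_0]$ we have $\Psi(t,-R)=-\upe^{\lambda(c)x_0}(1-S\,\upe^{-\eta[\cdots]})<0$, $\Psi(t,-R+\delta_0)\geq\gamma\,\Phi(-R+\delta_0)-\upe^{\lambda(c)x_0}>0$, and $\partial_\xi\Psi(t,\cdot)>0$, so $\Psi(t,\cdot)$ has a unique zero $\Xi(t)\in(-R,-R+\delta_0)$, at which it changes sign from $-$ to $+$. Setting $X(t):=c_At+L\,\Xi(t)$, I then read off the conclusions. Since $\Xi(t)\in(-R,0)$, property~(c) holds (and in particular $r(t,\cdot)\equiv r_1$ on $(-\infty,X(t)]$). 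Transversality $\partial_\xi\Psi(t,\Xi(t))>0$ gives $\partial_\xi q(\Xi(t))>\partial_\xi p(t,\Xi(t))$, where the choice of $\delta_0$ forces $\partial_\xi q(\Xi(t))>0$, which is~(b), and the last condition on $x_0$ forces $\partial_\xi p(t,\Xi(t))<0$ (factor $\upe^{-\lambda(c)(L\xi-x_0)}$ out of $\partial_\xi p$ and bound $S\,\upe^{-\eta[(c_A-c)t+L\Xi(t)-x_0]}\leq S\,\upe^{\eta LR}\upe^{\eta x_0}$), which is~(a); transferring through the positive prefactor yields $\partial_xP(t,X(t))<0<\partial_xQ(t,X(t))$. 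For~(d), $X(t)-ct-x_0=(c_A-c)t+L\Xi(t)-x_0>-RL-x_0\geq\tfrac{\ln S}{\eta}+1$ for every $t\geq0$, whence $\inf_{t\geq0}(X(t)-ct-x_0)>\tfrac{\ln S}{\eta}$; the same estimate shows $p(t,\xi)>0$ throughout the range where $P$ enters the definition~\eqref{subsolstep4} of $\underline{u}$ (there $S\,\upe^{-\eta[(c_A-c)t+L\xi-x_0]}<1$), so that, $q$ vanishing on $(-\infty,-R]$ and $\Psi(t,\cdot)$ being negative on $(-R,\Xi(t))$, the point $X(t)$ is the smallest zero of $P-Q$ in that range, consistently with~\eqref{subsolstep4}; transversality ($\partial_xP-\partial_xQ<0$ at $X(t)$) makes it isolated. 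Finally, $\Psi$ is $\mathcal{C}^1$ (indeed analytic) in $(t,\xi)$ near $\{(t,\Xi(t)):t\geq0\}$ with $\partial_\xi\Psi(t,\Xi(t))\neq0$, so the implicit function theorem gives $\Xi\in\mathcal{C}^1([0,+\infty),(-R,-R+\delta_0))$; as $A(t)=c_At$ is smooth, $X=c_A(\cdot)+L\,\Xi\in\mathcal{C}^1([0,+\infty),(-RL,+\infty))$, which is~(e), and~(f) follows from $X(t)-c_At=L\,\Xi(t)\in(-RL,-RL+L\delta_0)$.

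The main obstacle is the uniformity in $t\geq0$: the argument works precisely because $c<c_A$ allows the $t$-dependent exponential correction $S\,\upe^{-\eta[(c_A-c)t+L\xi-x_0]}$ to be made uniformly (in $t$) negligible once $x_0$ is taken sufficiently negative, reducing the problem to a small transversal perturbation of the fixed crossing between $q$ and the pure exponential tail $\upe^{-\lambda(c)(L\xi-x_0)}$. The only genuinely delicate point is the bookkeeping of the several simultaneous ``$x_0$ sufficiently negative'' requirements (and, in Step~4, their compatibility with the smallness of $\sigma$ chosen afterwards).
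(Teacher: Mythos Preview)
Your proof is correct and follows essentially the same strategy as the paper's: use the Hopf lemma to get $(\varphi_1^R)'(-R)>0$, hence the opposite monotonicities of $P$ (decreasing) and $Q$ (increasing) on a short interval at the left edge of the support of $Q$, then take $x_0$ sufficiently negative to force a unique transversal crossing there, and read off (a)--(f) via the implicit function theorem. Your passage to the moving frame $\xi=(x-c_At)/L$ together with the rescaling by $\upe^{\lambda(c)(c_A-c)t}$ is a mild streamlining---it makes the rescaled $Q$-profile time-independent and reduces the $t$-dependence of $P$ to a perturbation that is uniformly small for $t\geq 0$ once $x_0\ll 0$, so the crossing is obtained for all $t$ at once---whereas the paper first locates $X(0)$ and then keeps $X(t)$ trapped by a separate endpoint inequality; the underlying ideas and the list of conditions imposed on $x_0$ are the same.
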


\begin{proof}
For any $t\geq 0$, the function 
$P:x\mapsto \upe^{-\lambda(c)(x-ct-x_0)}-S\upe^{-(\lambda(c)+\eta)(x-ct-x_0)}$ is unimodal, increasing on the left of
$x=ct+x_0+\frac{1}{\eta}\ln\left(\frac{S(\lambda(c)+\eta)}{\lambda(c)}\right)$ and decreasing on its right.
Therefore, if 
\begin{equation}\label{eq:x0add}
x_0<-RL-\frac{1}{\eta}\ln\left(\frac{S(\lambda(c)+\eta)}{\lambda(c)}\right),
\end{equation}
then, since $c<c_A$, $x\mapsto P(t,x)$ is, at any $t\geq 0$, decreasing in $[c_At-RL,c_At]$. 
In particular, statement \textit{(a)} will be a consequence of \textit{(c)}.

For any $t\geq 0$, the function
\begin{equation}
y\in[-R,+R]\mapsto\gamma\upe^{-\lambda(c)(c_A-c)t}\upe^{-\frac{c_A L y}{2}}\varphi_1^R\left(y\right)
\end{equation}
\revision{has the following} derivative:
\begin{equation}
y\in[-R,+R]\mapsto\gamma\upe^{-\lambda(c)(c_A-c)t}\upe^{-\frac{c_A Ly}{2}}\left(-\frac{c_A L}{2}\varphi_1^R(y)+ \left( \varphi_1^R \right) '(y)\right).
\end{equation}
At $y=-R$, by virtue of the Hopf lemma (\textit{i.e.}, $\left( \varphi_1^R \right) '(-R)>0$), this derivative is positive.
Similarly, at $y=+R$, the derivative is negative.
Therefore, by virtue of the intermediate value theorem, there exists $r\in(-R,R)$ such that 
$y\mapsto\gamma\upe^{-\lambda(c)(c_A-c)t}\upe^{-\frac{c_ALy}{2}}\varphi_1^R\left(y\right)$
is increasing in $(-R,-R+r)$. Without loss of generality, up to reducing $r$ we may assume that $-R+r <0$.

Consequently, the function 
\begin{equation}
x\in[-R L +c_At,-(R-r)L+c_At]\mapsto Q(t,x)
\end{equation} 
is increasing. 
Statement \textit{(b)} of Lemma~\ref{lem:interface_sub-solution} follows from this monotonicity property, along with the fact to be established below that $X(t)$ belongs to the interval $[-R L + c_A t, -(R-r) L + c_A t]$.

In view of the monotonicities in $x$ of each side of the equality \eqref{eq:interface_sub-solution} (decreasing on the left, increasing on the right), if $-x_0$ is so large that
\begin{equation}\label{-R+r}
\upe^{\lambda(c)((R-r)L+x_0)}<\gamma\upe^{\frac{c_A}{2}(R-r)L}\varphi_1^R(-R+r),
\end{equation}
then by the intermediate value theorem, at $t=0$ there is a (unique) solution~$X(0)$ of \eqref{eq:interface_sub-solution} in $(-RL,-(R-r)L)$.
By the implicit function theorem, this solution can be extended in a continuously differentiable way in an open time interval around $t=0$. 
To show that $X(t)$ can be extended globally, for all $t\geq 0$, it suffices to prove that $X(t)<c_At-(R-r)L$ for any $t$ such that $X(t)$ is well-defined. 
In order to verify this inequality, recall that $X(t)$ satisfies $P(t,X(t))=Q(t,X(t))$ and that $P(t,x)$ (respectively $Q(t,x)$) is monotone decreasing (respectively increasing) in $x$. Therefore all we need to show is that $Q(c_At-(R-r)L,t)>P(c_At-(R-r)L,t)$ for such $t$, or equivalently,
\begin{eqnarray*}
&& \gamma\upe^{-\lambda(c)(c_A-c)t}\upe^{\frac{c_A}{2}(R-r)L}\varphi_1^R(-R+r)  \\
& > & \upe^{-\lambda(c)(c_At-(R-r)L-ct-x_0)}-S\upe^{-(\lambda(c)+\eta)(c_At-(R-r)L-ct-x_0)} .
\end{eqnarray*}
This follows directly from:
\begin{equation}
\begin{aligned}
\gamma\upe^{\frac{c_A}{2}(R-r)L}\varphi_1^R(-R+r) & >\upe^{\lambda(c)((R-r)L+x_0)} \\
& >\upe^{\lambda(c)((R-r)L+x_0)}-S\upe^{(\lambda(c)+\eta)((R-r)L+x_0)-\eta(c_A-c)t},
\end{aligned}
\end{equation}
where we used~\eqref{-R+r}.

By construction, we have that $c_A t - R L < X (t) < c_A t$, \textit{i.e.}, \textit{(c)} holds true. As explained above, statement~\textit{(a)} and~\textit{(b)} follow, and so does~\textit{(f)}. The differentiability property~\textit{(e)} follows from the implicit function theorem. Finally, \textit{(d) i.e} the estimate $\frac{\ln S}{\eta}< \inf_{t \geq 0 } X(t)-ct-x_0$ results from a direct calculation:
\begin{equation}
X(t)>c_At-RL>c_At+x_0+\frac{\ln S}{\eta}+\frac{\ln(1+\eta/\lambda(c))}{\eta}>ct+x_0+\frac{\ln S}{\eta},
\end{equation}
where we used~\eqref{eq:x0add}. This completes the proof of Lemma~\ref{lem:interface_sub-solution}.
\end{proof}

To summarize Step~4, thanks to Lemma \ref{lem:interface_sub-solution}, the key partial differential inequality and the angle condition \eqref{angle} are verified for any 
$c\in\left(2\sqrt{r_1},\min\left(c_A,F\left(c_A\right)\right)\right)$, with an amplitude parameter
$\iota\in(0,1]$ whose value can be chosen arbitrarily small. 

Since $\underline{u}$ is compactly supported, choosing $\iota$ small enough, $\underline{u}$ is below the solution~$u$ at time $t=1$.
Hence $\underline{u}$ is a sub-solution for all $t\geq 1$ and this proves that~$\underline{c}\geq c^\star$ if $c_A\in(2\sqrt{\max(r_1,r_3)},2\sqrt{r_1}+2\sqrt{-\lambda_1-r_1})$.
\end{proof}

Again, for later use we state the following lemma which sums up our construction of a sub-solution in this last step.
\begin{lem}\label{lem:sub0}
Assume 
that $c_A\in(2\sqrt{\max(r_1,r_3)},2\sqrt{r_1}+2\sqrt{-\lambda_1 - r_1})$ and 
that $A(t) = c_A t$. Let~$c^\star$ and $\underline{u}$ respectively be defined by~\eqref{eq:c*formula} and~\eqref{subsolstep4} with $c= c^\star - \varepsilon$. 
Then for any $\varepsilon >0$ sufficiently small, there exist positive constants $R', \sigma, \eta, S, R, x_0, \gamma$ such that, for all $\iota \in (0,1]$, the function~$\underline{u}$ is a sub-solution of \eqref{eq:main} 
for $t >0$ and $x \in \mathbb{R}$ whose front propagates at the speed $c$. For each $t\geq 0$, the support of $\underline{u}(t,x)$ is compact, and 
$\underline{u}$ is proportional to the amplitude parameter $\iota$.  
\end{lem}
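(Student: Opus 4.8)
The plan is to observe that Lemma~\ref{lem:sub0} merely repackages, in a self-contained form, the construction carried out in Step~4 of the proof of Theorem~\ref{thm:constant_case}; so the proof will consist of checking that the hypotheses of the lemma place us exactly in the situation treated there, of pinning down the order in which the parameters must be fixed, and of recording that $\iota$ never enters any of the thresholds. First I would verify that the choice $c = c^\star - \varepsilon$ is admissible: in the parameter range $c_A\in\bigl(2\sqrt{\max(r_1,r_3)},2\sqrt{r_1}+2\sqrt{-\lambda_1-r_1}\bigr)$ we have, as established in Step~4, that $c^\star = \min\bigl(c_A,F(c_A)\bigr) > 2\sqrt{r_1}$; hence for every $\varepsilon>0$ small enough, $c = c^\star-\varepsilon$ lies in the nonempty open interval $\bigl(2\sqrt{r_1},\min(c_A,F(c_A))\bigr)$, which is precisely the range of speeds for which Step~4 produced a sub-solution of the form~\eqref{subsolstep4}.

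Next I would fix the remaining parameters in the order dictated by Step~4 and by Lemma~\ref{lem:interface_sub-solution}: choose $R'>\pi/(2\sqrt{r_1})$ and then $\eta\in\bigl(0,\min(\lambda(c),\sqrt{c^2-4r_1})\bigr)$; choose $S$ large enough (depending on $\eta$) to satisfy both $S>1$ and the lower bound $S>M/\bigl(\eta(\sqrt{c^2-4r_1}-\eta)\bigr)$ that makes the function in~\eqref{claim:negative} negative; choose $R>1$ large enough that $\lambda_1^R - \lambda(c)(c_A-c) + c_A^2/4 < -\delta$ with $\delta = \tfrac12|\lambda_1-\lambda(c)(c_A-c)+c_A^2/4|$, which is possible since $c<F(c_A)$ is equivalent to the strict inequality~\eqref{eq:necessary0} and since $\lambda_1^R\to\lambda_1$; choose $\gamma>0$ small enough for the comparison in $\{x>X(t)\}$; then choose $x_0$ sufficiently negative so that~\eqref{eq:x0add} and~\eqref{-R+r} hold, which by Lemma~\ref{lem:interface_sub-solution} makes $X(t)$ well-defined, $\mathcal{C}^1$, confined to $(c_At-RL,c_At)$, and ensures the positive derivative gap~\eqref{angle} at $x=X(t)$; finally choose $\sigma>0$ small enough, on the one hand so that $f(t,x,v)\geq \tfrac{\pi^2}{4R'^2}v$ and $f(t,x,v)\geq 0$ for $v\in[0,\sigma]$ in the relevant left region (using~\eqref{ass:KPP} and $\tfrac{\pi^2}{4R'^2}<r_1$), and on the other hand so that $\tfrac{\ln S}{\eta}+x_0+x_1\leq \inf_{t\geq0}X(t)-ct$, which guarantees that every subinterval in~\eqref{subsolstep4} is nonempty, hence that $\underline u$ is continuous, and that its support at time $t$ is the compact interval $[\tfrac{\ln S}{\eta}+x_0-2R',\,c_At+RL]$.

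With all parameters fixed this way, the verification that $\underline u$ satisfies $\partial_t\underline u - \partial_{xx}\underline u \leq f(t,x,\underline u)$ on each of the five subdomains of~\eqref{subsolstep4}, together with the angle conditions, is exactly the computation of Step~4, carried out word for word, so there is no new analytical content. The point to emphasize --- and the only place where care is genuinely required --- is that every inequality used there involves $\underline u$ only through bounds of the form $\underline u\leq\iota\sigma\leq\sigma$ or $\underline u\leq\gamma\upe^{c_ARL/2}\max\varphi_1^R$, or through the proportionality $\underline u = \iota\,(\cdots)$, so that none of the parameter choices depends on $\iota$; consequently $\underline u$ is a sub-solution for every $\iota\in(0,1]$ simultaneously, with front speed $c$ and compact support, which is the content of the lemma. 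The main obstacle is thus organizational rather than analytical: one must confirm that the chain of parameter selections closes up consistently --- each later choice being compatible with all earlier ones --- and, crucially, that $\iota$ never enters any threshold, a fact that holds precisely because in Step~4 the amplitude was deliberately kept as a free parameter throughout.
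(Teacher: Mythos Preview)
Your proposal is correct and matches the paper's approach exactly: the paper does not give a separate proof of Lemma~\ref{lem:sub0}, but simply states it as a summary of the construction already carried out in Step~4, and your write-up faithfully records that construction, the order in which parameters are fixed, and the key observation that none of the thresholds involve~$\iota$.
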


\subsection{Proof of Corollaries \ref{cor:slow_patch} and \ref{cor:fast_patch}}

The results follow from simple comparison arguments. 
As the proof proceeds exactly the same way, we replace the constant~$L$ in~\eqref{ass:piecewise_constant_r} by an arbitrary positive and continuous function~$L(t)$.

\begin{proof}[Proof of Corollary \ref{cor:slow_patch}] 
Assuming that $\sup_{t \geq 0} \frac{A(t) + L(t)}{L(t)} \leq 2 \sqrt{r_3}$,
we have that
$$\underline{r} (t,x) \leq r(t,x) \leq \overline{r} (t,x),$$
where
\begin{equation*}
\underline{r} :(t,x)\mapsto
\begin{cases}
\min \left( r_1, r_2, r_3 \right) & \text{if }x< 2 \sqrt{r_3} t, \\
r_3 & \text{if } 2 \sqrt{r_3} t \leq x,
\end{cases}
\end{equation*}
\begin{equation*}
\overline{r} :(t,x)\mapsto
\begin{cases}
\max \left( r_1, r_2, r_3 \right) & \text{if }x< 2 \sqrt{r_3} t, \\
r_3 & \text{if } 2 \sqrt{r_3} t \leq x .
\end{cases}
\end{equation*}
Let $\underline{u}$ and $\overline{u}$ denote the solutions of \eqref{eq:main} corresponding to $\underline{r}$ and $\overline{r}$, respectively. Then by the comparison principle, 
\[
\underline{u}(t,x)\leq u(t,x)\leq \overline{u}(t,x).
\]
According to Theorem \ref{thm:one_interface}, $\underline{u}$ and $\overline{u}$ both spread with speed $2 \sqrt{r_3}$. Consequently, we have $\underline{c} = \overline{c} = 2\sqrt{r_3}$ under the assumptions of Corollary~\ref{cor:slow_patch}.
\end{proof}

\begin{proof}[Proof of Corollary \ref{cor:fast_patch}] The proof proceeds similarly. Notice that if $\inf_{t \geq 0}\frac{A(t)}{t} \geq 2 \sqrt{r_1} + 2 \sqrt{r_2 -r_1}$, then
$$\underline{r} (t,x) \leq r(t,x) \leq \overline{r} (t,x),$$
where
\begin{equation*}
\underline{r} :(t,x)\mapsto
\begin{cases}
r_1  & \text{if }x< 2 \sqrt{r_1} + 2 \sqrt{r_2 -r_1}, \\
\min \left( r_1, r_2, r_3 \right)  & \text{if } 2 \sqrt{r_1} + 2 \sqrt{r_2 -r_1} \leq x,
\end{cases}
\end{equation*}
\begin{equation*}
\overline{r} :(t,x)\mapsto
\begin{cases}
r_1 & \text{if }x<  2 \sqrt{r_1} + 2 \sqrt{r_2 -r_1}, \\
\max \left( r_1, r_2, r_3 \right) & \text{if } 2 \sqrt{r_1} + 2 \sqrt{r_2 -r_1} \leq x .
\end{cases}
\end{equation*}
Recall that $\max \left( r_1, r_2,r_3 \right) = r_2$. Applying Theorem~\ref{thm:one_interface} and using the comparison principle as in the proof of Corollary \ref{cor:slow_patch}, we find that $\underline{c} = \overline{c} =  2\sqrt{r_1}$.
\end{proof}


\subsection{Proof of Theorem \ref{thm:intermediate_case_large_oscillations}} 

Here we prove that if the moving speed of the heterogeneity alternates between two values very slowly, then the minimal and maximal spreading speeds $\underline{c}$ and~$\overline{c}$ may differ. We will prove this result by constructing appropriate super- and sub-solutions.
 
The values of $r_1,r_2,r_3,L$ being fixed, we deduce from the statement of Theorem~\ref{thm:constant_case} a value
for $\lambda_1$ which does not depend on $c_A$. By assumption, $\lambda_1\neq-r_1$, and $2\sqrt{-\lambda_1}<c_{A,1}<c_{A,2}<2\sqrt{r_1}+2\sqrt{-\lambda_1-r_1}$. For each $i\in\{1,2\}$, we define
\begin{equation}\label{c*_12}
    c^\star_i = F(c_{A,i}) = \frac12\left(c_{A,i}-2\sqrt{-\lambda_1-r_1}+\frac{4r_1}{c_{A,i}-2\sqrt{-\lambda_1-r_1}}\right). 
\end{equation}
As one can check, we have
\begin{equation}\label{c-comparison}
2\sqrt{r_1}<c^\star_2<c^\star_1 < 2\sqrt{-\lambda_1}<c_{A,1}<c_{A,2}.
\end{equation}
Note that the above definition of $c^\star_i\,(i=1,2)$ agrees with that of $c^\star$ in \eqref{eq:c*formula} for the case $2\sqrt{-\lambda_1}<c_A<2\sqrt{r_1}+2\sqrt{-\lambda_1-r_1}$, where $c_A$ will be replaced by $c_{A,1}$ or $c_{A,2}$ in the later arguments. This enables us to apply Lemma~\ref{lem:super0} and Lemma~\ref{lem:sub0} in the construction of super- and sub-solutions.

Hereafter, we define 
\begin{equation}\label{eq:r_abuse}
\tilde{r} : z \mapsto
\begin{cases}
r_1 & \text{if } z< 0, \\
r_2 & \text{if } 0 \leq z < L , \\
r_3 & \text{if } L \leq z .
\end{cases}
\end{equation}
Thus the function $r(t,x)$ in \eqref{ass:piecewise_constant_r} is expressed as $r(t,x)=\tilde{r}(x-A(t))$.

The aim of this section is to prove that, under the assumptions of Theorem~\ref{thm:intermediate_case_large_oscillations}, the minimal and maximal spreading speeds of the solution~$u$ of \eqref{eq:main} satisfy
\begin{equation*}
    \underline{c} \leq c^\star_2 + 2 \varepsilon < c^\star_1 - 2 \varepsilon \leq \overline{c},
\end{equation*}
for all sufficiently small $\varepsilon >0$, which implies $\underline{c} \leq c^\star_2 < c^\star_1  \leq \overline{c}$.

Before starting the proof, we note that the function $A(t)$ defined in \eqref{A(t)} satisfies
\begin{equation}
c_{A,1}\hspace{1pt}t \leq A(t)<c_{A,2}\hspace{1pt}t\ \ \hbox{for all } t\geq 0.
\end{equation}
In particular, we have $A(t_{2n+1})<c_{A,2}\hspace{1pt} t_{2n+1}$ ($n=0,1,2,3,\ldots$).

We will use basically the same super- and sub-solutions constructed in the proof of Theorem~\ref{thm:constant_case} with minor modifications. Since we are assuming $2\sqrt{-\lambda_1}<c_{A,1}<c_{A,2}<2\sqrt{r_1}+2\sqrt{-\lambda_1-r_1}$, the functions $\overline{u}$ defined in \eqref{super_step2} in Step~2 of Section~\ref{sec:proof_main} and $\underline{u}$ defined in \eqref{subsolstep4}  in Step~4 will be relevant. 

First, in the time interval $I_{2n+1}=[t_{2n+1},t_{2n+2})$, we use the function $\overline{u}$ defined in \eqref{super_step2} with $c_A$ replaced by $c_{A,2}$ and with $c=c^\star_2$. 
It gives an upper bound for $\underline{c}$. 
In order to make this function to serve as a super-solution in our later argument, we present a slightly modified version of Lemma~\ref{lem:super0} as follows:

\begin{lem}\label{lem:super0-a}
Assume $c_A>2\sqrt{-\lambda_1}$ and that $A(t) = c_A (t-\tau) + B$ for $t\in[\tau, \tau+T)$ for some constants $B, \tau \geq 0$ and $T>0$. 
Let~$\overline{u}$ be the function defined by~\eqref{super_step2}, with~$c = c^\star$ from~\eqref{eq:c*formula}. Then for any constant $C\geq 1$, $C\overline{u}(t-\tau, x-B)$ is a super-solution of \eqref{eq:main} for $t \in [\tau, \tau+T)$ and $x \in \mathbb{R}$.
\end{lem}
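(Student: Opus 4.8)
The plan is to reduce Lemma~\ref{lem:super0-a} to Lemma~\ref{lem:super0} by a translation in space and time. Observe that the hypothesis $A(t) = c_A(t-\tau) + B$ on $[\tau,\tau+T)$ says precisely that, after the change of variables $s = t-\tau$, $\xi = x - B$, the shifted function $\hat A(s) := A(s+\tau) - B$ satisfies $\hat A(s) = c_A s$ for $s\in[0,T)$. Since equation~\eqref{eq:main} has a reaction term $f(t,x,u)$ whose only $(t,x)$-dependence enters through $r(t,x) = \tilde r(x - A(t))$ (using the notation~\eqref{eq:r_abuse}), and since $\tilde r\bigl(x - A(t)\bigr) = \tilde r\bigl((x-B) - \hat A(t-\tau)\bigr)$, the translated profile is governed by the same structural assumptions~\eqref{ass:KPP}, \eqref{ass:piecewise_constant_r} with the patch position given by the \emph{linear} function $\hat A$ on the time window $[0,T)$.

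First I would spell out this invariance precisely: if $w(s,\xi)$ is a super-solution of $\partial_s w - \partial_{\xi\xi} w = f(s + \tau, \xi + B, w)$ for $(s,\xi)\in[0,T)\times\R$, then $W(t,x) := w(t-\tau, x-B)$ is a super-solution of~\eqref{eq:main} for $(t,x)\in[\tau,\tau+T)\times\R$, because $\partial_t W - \partial_{xx} W = (\partial_s w - \partial_{\xi\xi} w)(t-\tau,x-B) \geq f(t, x, W)$, where the last step uses that $f(t,x,v)$ depends on $(t,x)$ only via $\tilde r(x-A(t))$ and that $\tilde r(x - A(t)) = \tilde r\bigl((x-B) - \hat A(t-\tau)\bigr)$, which is exactly the reaction that would appear when applying Lemma~\ref{lem:super0} with the linear patch motion $\hat A$. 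One must also check that the piecewise/angle-condition structure is preserved: the super-solution notion in this paper allows a continuous function with the correct (negative) jump in the spatial derivative along the interfaces, and translation in $(t,x)$ leaves such jumps unchanged, so no new verification is needed there.

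Next I would invoke Lemma~\ref{lem:super0} directly with $A(t)$ replaced by $\hat A(t) = c_A t$ on $[0,T)$ and with the same $c = c^\star$ from~\eqref{eq:c*formula}: since the eigenvalue $\lambda_1$ and the threshold $c^\star$ depend only on $r_1,r_2,r_3,L$ and $c_A$ — all unchanged under the translation — the hypothesis $c_A > 2\sqrt{-\lambda_1}$ carries over verbatim, and the lemma yields that $C\,\overline{u}(s,\xi)$ is a super-solution of $\partial_s w - \partial_{\xi\xi} w = f(s+\tau, \xi+B, w)$ on $[0,T)\times\R$ for every $C\geq 1$. Composing with the translation from the previous paragraph gives that $C\,\overline{u}(t-\tau, x-B)$ is a super-solution of~\eqref{eq:main} on $[\tau,\tau+T)\times\R$, which is exactly the assertion.

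I do not expect a genuine obstacle here: the whole content is the observation that~\eqref{eq:main} is autonomous up to the motion of the patch, so a space-time translation that linearizes $A$ on a window reduces everything to the already-proved Lemma~\ref{lem:super0}. The only point requiring a little care — and the closest thing to a ``hard part'' — is making sure that the translated reaction really coincides with the one handled in Step~2/Lemma~\ref{lem:super0}, i.e.\ that replacing the affine $A$ by the linear $\hat A$ does not alter $f$ anywhere on the relevant time window; this is immediate from $r(t,x) = \tilde r(x-A(t))$ and $A(t) - B = \hat A(t-\tau)$ for $t\in[\tau,\tau+T)$, together with the fact that on that window the patch sits in a region where only $r_1,r_2,r_3$ matter, with no interference from the initial position $A(0)=0$ of the original problem.
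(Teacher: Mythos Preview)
Your proposal is correct and follows essentially the same approach as the paper: a space-time translation $s=t-\tau$, $\xi=x-B$ converts the affine patch motion into the linear one $\hat A(s)=c_A s$, so that Lemma~\ref{lem:super0} applies to the translated equation, and undoing the translation gives the claim. The paper's own proof is a terser version of exactly this argument.
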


\begin{proof}
By the change of variables $s=t-\tau$, $y=x-B$, the above function is converted to $C\overline{u}(s,y)$ and \eqref{eq:main} is converted to
\[
\partial_s u = \partial_{yy} u + \hat{f}(s,y,u)\quad (0<s<T,\,y\in\R),
\]
where $\hat{f}(s,y,u):=f(s+\tau,y+B,u)$. Since $\hat{f}$ satisfies the same assumption as \eqref{ass:KPP} and \eqref{ass:piecewise_constant_r} for $\tau\in[0,T)$, $C\overline{u}(s,y)$ is a super-solution of the above equation by Lemma~\ref{lem:super0}. Coming back to the original variables proves the lemma.
\end{proof}

Next, in the time interval $I_{2n}=[t_{2n},t_{2n+1})$, we will modify the sub-solution $\underline{u}$ in \eqref{subsolstep4}. More precisely, we will consider a function $\underline{u}_{1,\delta}$ that satisfies
\begin{equation}\label{eq:underlineu1delta}
\partial_t \underline{u}_{1,\delta} \leq \partial_{xx} \underline{u}_{1,\delta} +  (\tilde{r} (x-c_{A,1} t) - \delta) \underline{u}_{1,\delta} - M \underline{u}_{1,\delta}^2,
\end{equation}
and propagates with some speed $c  \geq   c^\star_1 - \varepsilon$. Notice that the spreading speed~$c^\star$ depends continuously on the values $r_1,r_2,r_3$, so that the above speed inequality holds true if $\delta$ is sufficiently small. This will give a lower bound for $\overline{c}$.

With these notations, we will proceed in two steps to prove Theorem~\ref{thm:intermediate_case_large_oscillations}. 

\begin{proof}[Step 1: 
Proof of\hspace{2pt} $\underline{c}\leq c^\star_2+2\varepsilon$]

Let $\overline{u}_2$ denote the function $\overline{u}$ in \eqref{super_step2} with $c=c^\star_2$ as in \eqref{c*_12}, which 
coincides with $c^\star$ in \eqref{eq:c*formula} for the case $2\sqrt{-\lambda_1}<c_A<2\sqrt{r_1}+2\sqrt{-\lambda_1-r_1}$, where $c_A$ is replaced by $c_{A,2}$. 
For each $n\in\N$, since $A(t)=A(t_{2n+1})+c_{A,2}(t-t_{2n+1})$ for $t\in[t_{2n+1},t_{2n+2})$, by Lemma~\ref{lem:super0-a}, $C\overline{u}_2(t-t_{2n+1},x-A(t_{2n+1})$ is a super-solution of \eqref{eq:main} for $t\in[t_{2n+1},t_{2n+2})$ and for any constant $C\geq 1$.
Recall also that $\overline{u}_2(x,t)$ decays exponentially as $x\to +\infty$, and that its front propagates at the speed $c=c^\star_2$. 

Let us also introduce an auxiliary super-solution $u^\#$. 
Let $\eta>0$ and let
\begin{equation}
    \lambda^\#=\frac{c_{A,2}}{2}+\sqrt{-r_3-\lambda_1}+\eta,\quad c^\#=\lambda^\#+\frac{r_2}{\lambda^\#}.
\end{equation}
Choose $K>1$ such that $K \min\left(1,  \upe^{-\lambda^\# x}\right)\geq u_0(x)$ for all $x\in\R$. 
By construction, $c^\#\geq 2\sqrt{r_2}$ and 
\begin{equation*}
(\lambda^\#)^2 - c^\# \lambda^\#  = -r_2 \leq -r(t,x),
\end{equation*}
for all $t >0$ and $x \in \mathbb{R}$. Thus
\begin{equation*}
u^\#:(t,x)\mapsto \min\left(1,\upe^{-\lambda^\#(x-c^\# t)}\right)
\end{equation*}
is a super-solution of~\eqref{eq:main} satisfying $u^\#\geq u$ globally in time and space.
Moreover, we can choose $\eta$ large enough so that $(c^\# -c_{A,2})t_1>L$, which implies
\begin{equation*}
(c^\# -c_{A,2})t_{2n+1}>L\quad \hbox{for all}\ n\in\N.
\end{equation*}
Thanks to this auxiliary super-solution, we have a rough control of the decay of $u(t,x)$ as $x\to+\infty$ for any $t>0$. 

Next, with $\overline{u}_2$ as defined above, we construct a sequence $(K_n)_{n\in\N}$ such that
\begin{equation}\label{Kn}
u^\#(t_{2n+1},x)\leq K_n\overline{u}_2(0,x-A(t_{2n+1}))\quad\text{for all }x\in\R,\ \hbox{all large } n\in\N.
\end{equation}
In order to show that such $(K_n)_{n\in\N}$ exists, it suffices to show that
\[
\sup_{x\in\R}\,\frac{u^\#(t_{2n+1},x)}{\overline{u}_2(0,x-A(t_{2n+1}))} < +\infty\quad \hbox{for all large}\ n\in\N.
\]
Let us estimate the above quantity.

First, in the region $x\geq c^\# t_{2n+1}$, since $(c^\# -c_{A,2})t_{2n+1}>L$, we have
\[
x \geq c_{A,2} t_{2n+1} + L \geq A (t_{2n+1}) + L .
\]
Therefore, by the definition of $\overline{u}_2$ in \eqref{super_step2} for the case $x\geq c_A t$,
\begin{eqnarray*}
\overline{u}_2 (0, x- A (t_{2n+1}) ) & =& e^{- \frac{c_{A,2}}{2} (x- A (t_{2n+1}))} \varphi_1 \left( \frac{x- A (t_{2n+1})}{L} \right) \\
& \geq & C e^{- \frac{c_{A,2}}{2} (x- A (t_{2n+1}))} e^{-\sqrt{-r_3 - \lambda_1} (x- A (t_{2n+1}))}  ,
\end{eqnarray*}
for some $C>0$. Here we used the fact that either $r_1 < r_3$, or $r_1 \geq r_3$ and $L > \underline{L}$ (see Remark~\ref{rem:lambda1-r1}), so that~$\varphi_1$ is defined by either~\eqref{caseL_0>underlineL} or~\eqref{eq:eigen_case2}. Moreover, from our assumptions on the sequence $(t_n)_{n \in \N}$ we have that
$$A (t_{2n+1}) = c_{A,1} t_{2n+1} +  o (t_{2n+1}),$$
as $n \to +\infty$.
It follows that, by our choice of $\lambda^\#$ and $\eta >0$,
\begin{eqnarray*}
\frac{u^\#(t_{2n+1},x)}{\overline{u}_2(0,x-A(t_{2n+1}))} &\leq &  \frac{K}{C} \upe^{-\lambda^\# (x - c^\# t_{2n+1} )} e^{\left( \frac{c_{A,2}}{2} + \sqrt{-r_3 - \lambda_1} \right)(x- A (t_{2n+1}))} \\
&\leq &  \frac{K}{C}  e^{\left( \frac{c_{A,2}}{2} + \sqrt{-r_3 - \lambda_1}  \right)(c^\# t_{2n+1}- A (t_{2n+1}))} \\ 
& \leq & \frac{K}{C} \upe^{\left(\frac{c_{A,2}}{2}+\sqrt{-r_3-\lambda_1}\right)(c^\#-c_{A,1} +1)t_{2n+1}},
\end{eqnarray*}
for all $x \geq c^\# t_{2n+1}$.

Next, in the region $x\leq c^\# t_{2n+1}$, we have $u^\# (t_{2n+1},x) = K$, and, by \eqref{super_step2}, 
\[
\overline{u}_2 (0,x - A (t_{2n+1})) = \min \left( 2 , e^{-\lambda (c^\star_2) (x- A (t_{2n+1}))} \right) \geq 1,
\]
if $x \leq A (t_{2n+1})$, while using also the definition of $\varphi_1$ in either~\eqref{caseL_0>underlineL} or~\eqref{eq:eigen_case2},
$$
\overline{u}_2 (0, x - A (t_{2n+1})) \geq \upe^{-\frac{c_{A,2}(x - A(t_{2n+1}) )}{2}} \min_{[0,1]} \varphi_1 \times e^{-\sqrt{-r_3 - \lambda_1} (x - A (t_{2n+1}))}
$$
if $A (t_{2n+1})\leq x\leq c^\# t_{2n+1}$. Thus
$$\inf_{x \leq c^\# t_{2n+1} } \overline{u}_2 (0,x-A(t_{2n+1})) \geq \min_{[0,1]} \varphi_1  \times  e^{-\left( \frac{c_{A,2}}{2} + \sqrt{-r_3 - \lambda_1}\right) (c^\# t_{2n+1} - A (t_{2n+1}))} >0.$$ 
Hence
\begin{eqnarray*}
        \frac{u^\#(t_{2n+1},x)}{\overline{u}_2(0,x-A(t_{2n+1}))}& \leq &  \frac{K}{\min_{[0,1]} \varphi_1}  \times  e^{\left( \frac{c_{A,2}}{2} + \sqrt{-r_3 - \lambda_1}\right)(c^\# t_{2n+1} - A (t_{2n+1}))}\\
& \leq & \frac{K}{\min_{[0,1]} \varphi_1}  \times  e^{\left( \frac{c_{A,2}}{2} + \sqrt{-r_3 - \lambda_1}\right) (c^\#- c_{A,1} + 1) t_{2n+1} }\\
    \end{eqnarray*} 
for $x \leq c^\# t_{2n+1}$.

Therefore, for \eqref{Kn} to hold, it suffices to define the sequence $(K_n)$ by
\begin{equation}\label{eq:Kn}
    K_n= K  \max \left( \frac{1}{C}, \frac{1}{\min_{[0,1]} \varphi_1 } \right) \upe^{\left(\frac{c_{A,2}}{2}+\sqrt{-r_3-\lambda_1}\right)(c^\#-c_{A,1}+1)t_{2n+1}},
\end{equation}
for all large~$n$. Combining \eqref{Kn} and the inequality $u^\#\geq u$, we obtain
\[
u(t_{2n+1},x)\leq u^\#(t_{2n+1},x)\leq K_n \overline{u}_2(0,x-A(t_{2n+1})). 
\]
As mentioned before, $K_n \overline{u}_2(t-t_{2n+1},x-A(t_{2n+1}))$ is a super-solution in $t\in I_{2n+1}$, hence, by the comparison principle, we have for~$n$ large enough that
\[
u(t,x)\leq K_n\overline{u}_2(t-t_{2n+1},x-A(t_{2n+1}))\quad\text{for all}\ x\in\R,\ t\in I_{2n+1}.
\]
In particular, setting $t=t_{2n+2}$, we obtain
\begin{equation}\label{u<Kubar}
u(t_{2n+2},x)\leq K_n\overline{u}_2(|I_{2n+1}|,x-A(t_{2n+1}))\quad\text{for all}\ x\in\R.
\end{equation}
Let $B_n$ denote the value of the right-hand side of \eqref{u<Kubar} at $x=x_n: =(c^\star_2+2\varepsilon) t_{2n+2}$, that is,
\[
B_n:=K_n\overline{u}_2\left(|I_{2n+1}|,t_{2n+2}\left(c^\star_2+2\varepsilon-\frac{A(t_{2n+1})}{t_{2n+2}}\right)\right).
\]
Then, by \eqref{super_step2} we have that
\begin{equation}\label{Bn}
\begin{split}
B_n &= K_n e^{-\lambda (c^\star_2 )  \left( 2 \varepsilon t_{2n+2} - A (t_{2n+1}) + c^\star_2   t_{2n+1}  \right)}\\
& =K_n\upe^{-2\lambda(c^\star_2)\varepsilon t_{2n+2}}\upe^{\lambda(c^\star_2)\left(A(t_{2n+1})- c^\star_2 t_{2n+1}\right)},
\end{split}
\end{equation}
provided that
\begin{equation}
c^\star_2 (t_{2n+2}-t_{2n+1})-\frac{\ln 2}{\lambda(c^\star_2)}
\leq t_{2n+2}\left(c^\star_2+2\varepsilon-\frac{A(t_{2n+1})}{t_{2n+2}}\right)\leq c_{A,2}(t_{2n+2}-t_{2n+1}),
\end{equation}
which is clearly true for $n$ large enough and $\varepsilon >0$ small, since $t_{2n+1}=o(t_{2n+2})$ as~$n\to+\infty$.
Using~\eqref{eq:Kn} and again the assumption $t_{2n+1}=o(t_{2n+2})$, we see that the right-hand side of \eqref{Bn} tends to $0$ as $n\to+\infty$. It follows that
\[
\max_{x\geq x_n} K_n\overline{u}_2(|I_{2n+1}|,x-A(t_{2n+1}))\to 0\quad\hbox{as}\ \ n\to +\infty,
\]
since $\overline{u}_2(|I_{2n+1}|,x-A(t_{2n+1}))$ is monotone decreasing in $x$. This and \eqref{u<Kubar} imply
\[
\max_{x\geq x_n} u(t_{2n+2},x)\to 0\quad\hbox{as}\ \ n\to +\infty,
\]
which proves $\underline{c}\leq c^\star_2+2\varepsilon$.
\end{proof}

\begin{proof}[Step 2: Proof of\hspace{2pt} $\overline{c}\geq c^\star_1-2\varepsilon$]
By virtue of the global boundedness of the solution $u$ of \eqref{eq:main}, there exists $\rho > 0$ 
such that 
$f(t,x,u(t,x))\geq -\rho u(t,x)$ for any $(t,x)\in[0,+\infty)\times\R$. Therefore, by the comparison principle, for any $t>0$ and $x\in\R$,
\begin{equation}
    u(t,x)\geq\frac{e^{-\rho t}}{\sqrt{4\pi t}}\int_{\R}\upe^{-\frac{(x-y)^2}{4t}}u_0(y)\upd y.
\end{equation}
Then, there exists $\beta>0$ and $x_0\in\R$ such that $u_0\geq\beta \mathbf{1}_{[x_0-\beta,x_0+\beta]}$. 
It follows that, for any $c>0$ and $t>0$,
\begin{equation}
    u(t,ct)\geq\frac{\beta\upe^{-\rho t - \frac{c^2}{4} t}}{\sqrt{4\pi t}}\int_{x_0-\beta}^{x_0+\beta}\upe^{\frac{cy}{2}-\frac{y^2}{4t}}\upd y
    \geq\frac{\beta^2\upe^{-\rho t -\frac{c^2}{4} t}}{\sqrt{\pi t}}\upe^{\frac{c(x_0-\beta)}{2}}\upe^{-\frac{x_0^2+\beta^2+2|x_0|\beta}{4t}}.
\end{equation}
Recall that $\underline{u}_{1,\delta}$ is a subsolution of~\eqref{eq:underlineu1delta}, as constructed in~\eqref{subsolstep4} with $c \geq c^*_1 - \varepsilon$; see also Lemma~\ref{lem:sub0}. For any $n\in\N$, the support of $\underline{u}_{1,\delta} (0,x-A(t_{2n}))$ is exactly
\begin{equation*}
    A(t_2n)+\left[\frac{\ln S}{\eta}+x_0 -2R',L R\right].
\end{equation*}
Consequently, for any $n\in\N$,
the support of $\underline{u}_{1,\delta}(0,x-A(t_{2n}))$ is included in $\left[c_{A,1}t_{2n}+\frac{\ln S}{\eta}+x_0  -2R',c_{A,2}t_{2n}+L R\right]$.
Provided $n$ is sufficiently large, say $n\geq n_0$, the support is included in $[(c_{A,1}-\varepsilon)t_{2n},(c_{A,2}+\varepsilon)t_{2n}]$.
In such an interval, the decay of $x\mapsto u(t_{2n},x)$ can therefore be estimated
as follows (up to increasing~$n_0$):
\begin{equation}
    \min_{x\in[(c_{A,1}-\varepsilon)t_{2n},(c_{A,2}+\varepsilon)t_{2n}]}u(t_{2n},x)
    \geq C\upe^{-C' t_{2n}},
\end{equation}
where $C,C'>0$ are constants that only depend on $\beta$, $x_0$, $\rho$, $\delta$, $c_{A,1}$, $c_{A,2}$, $\varepsilon$.

Now, defining a sequence $(\kappa_n)_{n\in\N}$ by 
\begin{equation}\label{eq:kappan}
    \kappa_n = \frac{C\upe^{-C' t_{2n}}}{\max_{x\in\R}\underline{u}_{1,\delta}(0,x)}\quad\text{for all }n\in\N,
\end{equation}
we deduce
\begin{equation}\label{osc_sub_comp_t2n}
    \kappa_n\underline{u}_{1,\delta}(0,x-A(t_{2n}))\leq u(t_{2n},x)\quad\text{for all }n\geq n_0,\ x\in\R,
\end{equation}
and then
\begin{equation}
    \kappa_n\underline{u}_{1,\delta}(t,x-A(t_{2n}))\leq u(t+t_{2n},x)\quad\text{for all }n\geq n_0,\ x\in\R, \ t >0,
\end{equation}
by the parabolic comparison principle.

At this point of the proof, we remind that $\underline{u}_{1,\delta}$ travels with speed $c \geq c_1^\star - \varepsilon$ and is associated, not to the reaction term $f$, but to $(t,x,v)\mapsto (r- \delta) v - M v^2 $. More precisely, it satisfies~\eqref{eq:underlineu1delta}, \textit{i.e.},
$$
\partial_t \underline{u}_{1,\delta} \leq \partial_{xx} \underline{u}_{1,\delta} + (\tilde{r}(x-c_{A,1} t) - \delta) \underline{u}_{1,\delta} - M \underline{u}_{1,\delta}^2,
$$
where $\tilde{r}$ is defined in~\eqref{eq:r_abuse}. Therefore, if we multiply the function $\kappa_n\underline{u}_{1,\delta} (t,x-A(t_{2n}))$
by $\upe^{\frac{\delta}{2}t}$, we should still obtain a sub-solution, at least as long as this sub-solution is small enough.

Let us verify this last claim. First define 
\begin{equation*}
g:(t,x,v)\mapsto \tilde{r}(x-c_{A,1} t) v - M v^2,
\end{equation*}
so that
\begin{equation*}
g (t - t_{2n} ,x - A (t_{2n}),v) \leq f (t , x ,v),
\end{equation*}
for any $t \in [t_{2n}, t_{2n+1} ]$, $x \in \mathbb{R}$ and $v \geq 0$. 
Also we define
\begin{equation*}
\underline{u}_{1,\delta}^n:(t,x)\mapsto\underline{u}_{1,\delta} (t-t_{2n},x-A(t_{2n})).
\end{equation*}
Using~\eqref{eq:underlineu1delta}, the function
$(t,x)\mapsto \kappa_n\upe^{\frac{\delta}{2}(t-t_{2n})}\underline{u}_{1,\delta}^n(t,x)$
is a sub-solution of \eqref{eq:main} as long as $t \leq t_{2n+1}$ and
\begin{equation}
   M  \left( \kappa_n e^{\frac{\delta}{2} (t-t_{2n})} -1 \right) \underline{u}_{1,\delta}^n (t,x) \leq  \frac{\delta}{2} .
\end{equation}
Now recall that $\underline{u}_{1,\delta}$ is globally bounded by a constant $C''>0$ that depends on $\delta$ (hence on $\varepsilon$) but not on $n$. 
This constant is also an upper bound, independent of $n$, for each function $\underline{u}_{1,\delta}^n$.
Then the function
$(t,x)\mapsto \kappa_n\upe^{\frac{\delta}{2}(t-t_{2n})}\underline{u}_{1,\delta}^n(t,x)$
is a sub-solution of \eqref{eq:main} provided that
y
\begin{equation*}
    t-t_{2n}\leq T_n =\frac{2}{\delta}\ln\left(\frac{1}{\kappa_n}\left(1+\frac{\delta }{2 M C''}\right)\right) < t_{2n+1} - t_{2n},
\end{equation*}
where the latter inequality follows from the fact that $\frac{t_{2n+1}}{t_{2n}} \to +\infty$ and $| \ln \kappa_n | = O (t_{2n})$ by~\eqref{eq:kappan}.

By virtue of the comparison principle and~\eqref{osc_sub_comp_t2n},
\begin{equation}
    \kappa_n\upe^{\frac{\delta}{2}(t-t_{2n})}\underline{u}_{1,\delta}^n(t,x)\leq u(t,x)\quad\text{for all }n\geq n_0,\ x\in\R,\ t\in [t_{2n},t_{2n}+T_n].
\end{equation}
By a similar sub-solution construction, not growing in time but with an appropriately chosen amplitude (the algebra
is exactly the same), we can then prove that
\begin{equation}
    \kappa_n\upe^{\frac{\delta}{2}T_n}\underline{u}_{1,\delta}^n(t,x)\leq u(t,x)\quad\text{for all }n\geq n_0,\ x\in\R,\ t\in [t_{2n}+T_n,t_{2n+1}].
\end{equation}

Noting that $\kappa_n\upe^{\frac{\delta}{2}T_n} = 1+\frac{\delta }{2 M C''}$, we deduce
\begin{equation}
    \left( 1+\frac{\delta }{2 M C''}\right)\underline{u}_{1,\delta}^n(t_{2n+1},(c^\star_1-2\varepsilon)t_{2n+1})\leq u(t_{2n+1},(c^\star_1-2\varepsilon)t_{2n+1})\quad\text{for all }n\geq n_0.
\end{equation}
Using $t_{2n}=o(t_{2n+1})$ together with the detailed formula~\eqref{subsolstep4} where $c \geq c^\star_1 - \varepsilon$, it follows that
\begin{equation*}
    \underline{u}_{1,\delta}^n(t_{2n+1},(c^\star_1-2\varepsilon)t_{2n+1})= \underline{u}_{1,\delta} (t_{2n+1} - t_{2n}, (c^\star_1 - 2 \varepsilon) t_{2n+1} - A (t_{2n}) ) = \iota\sigma/2>0.
\end{equation*}
Finally we conclude that $\overline{c} \geq c^\star_1 - 2 \varepsilon$. This ends the proof of Step 2. As $\varepsilon$ can be chosen arbitrarily small, the proof of Theorem~\ref{thm:intermediate_case_large_oscillations} is complete.
\end{proof}

\section{Properties of $\lambda_1$}\label{sec:properties_lambda1}

Let $\lambda_1$ denote, as before, the principal eigenvalue of \eqref{eq:eigenproblem1} characterized by Proposition \ref{prop:eigen}. 
In this section, we uncover new properties of the map $(L,r_1,r_2,r_3)\mapsto\lambda_1$. Recall that if
\begin{equation}
    L>\underline{L}=
    \begin{cases}
    0 & \text{if }r_1=r_3, \\
    \frac{1}{\sqrt{r_2-\max(r_1,r_3)}}\operatorname{arccot}\left(\sqrt{\frac{r_2-\max(r_1,r_3)}{|r_1-r_3|}}\right) & \text{if }r_1\neq r_3,
    \end{cases}
\end{equation}
then $\lambda_1$ is characterized as the unique solution
in $\left(-r_2,\min\left(-\max\left(r_1,r_3\right),\frac{\pi^2}{L^2}-r_2\right)\right)$ of the equation \eqref{eq:equation_defining_lambda1},
whereas $\lambda_1 = - \max (r_1,r_3)$ if $L \leq \underline{L}$.
Recall also that $(L,r_1,r_2,r_3)\mapsto\lambda_1$ 
is smooth in each of the parameter sets $\{L>\underline{L}\}$, $\{L\leq\underline{L}, r_1<r_3\}$ and 
$\{L\leq\underline{L}, r_1>r_3\}$.

\subsection{Monotonicity and symmetry}

According to \cite[Proposition 2.3, (vii)]{Berestycki_Ros}, the function $(L,r_1,r_2,r_3)\mapsto\lambda_1$ is nonincreasing and
concave with respect to each variable. It is also $1$-Lipschitz-continuous with respect to $r_1$, $r_2$ and $r_3$. 
The following two propositions state the monotonicity properties of $\lambda_1$ with respect to $L$ and $r_2$ in a more precise manner. 

\begin{prop}\label{prop:lambda1_L0}
The map $L\in(0,+\infty)\mapsto\lambda_1(L)$ is continuous, constant in $(0,\underline{L}]$ (this interval might be empty), 
decreasing in $(\underline{L},+\infty)$, with the following asymptotic or particular values:
\begin{itemize}
\item $\lambda_1(L)\to-\max(r_1,r_3)$ as $L\to\underline{L}^+$;
\item $\lambda_1\left(\frac{\pi}{2}\sqrt{\frac{2r_2-r_1-r_3}{(r_2-r_1)(r_2-r_3)}}\right)=-\frac{r_2^2-r_1r_3}{2r_2-r_1-r_3}$;
\item $\lambda_1(L)\to-r_2$ as $L\to+\infty$.
\end{itemize}
\end{prop}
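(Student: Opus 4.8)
The plan is to exploit the explicit characterization of $\lambda_1$ from Lemma~\ref{lem:eigen}. In the range $L\le\underline{L}$ there is nothing to prove, since $\lambda_1(L)=-\max(r_1,r_3)$ identically; continuity at $L=\underline{L}$ then reduces to showing $\lambda_1(L)\to-\max(r_1,r_3)$ as $L\to\underline{L}^+$, which I address below. For $L>\underline{L}$, recall from the construction in Section~\ref{sec:construc_eigenfct} that $\lambda_1(L)$ is the unique zero in $\left(-r_2,\overline\lambda\right)$ of
\begin{equation}
G(L,\lambda)=\cot\left(L\sqrt{r_2+\lambda}\right)-\frac{r_2+\lambda-\sqrt{(r_1+\lambda)(r_3+\lambda)}}{\sqrt{r_2+\lambda}(\sqrt{-r_1-\lambda}+\sqrt{-r_3-\lambda})},
\end{equation}
where $\overline\lambda=\min\left(-\max(r_1,r_3),\frac{\pi^2}{L^2}-r_2\right)$. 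The key observations are: (i) for fixed $\lambda\in(-r_2,-\max(r_1,r_3))$, the map $L\mapsto\cot(L\sqrt{r_2+\lambda})$ is strictly decreasing as long as $L\sqrt{r_2+\lambda}\in(0,\pi)$, so $L\mapsto G(L,\lambda)$ is strictly decreasing; (ii) for fixed $L$, $\lambda\mapsto G(L,\lambda)$ is strictly decreasing (this is the "sum of two decreasing functions" fact already invoked in the eigenpair construction, which I will simply cite). Combining (i) and (ii) with the implicit function theorem gives that $L\mapsto\lambda_1(L)$ is strictly decreasing on $(\underline{L},+\infty)$ and smooth there (smoothness was already recorded in Section~\ref{sec:important_prop}).

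For the boundary behavior as $L\to\underline{L}^+$: since $\lambda_1$ is monotone and bounded below by $-r_2$, it has a limit $\ell\le-\max(r_1,r_3)$; passing to the limit in $G(L,\lambda_1(L))=0$ and using the finite limit computed in the eigenpair construction (the case-$1$ analysis there shows precisely that the boundary value of $G$ has the sign of $\underline{L}-L$, vanishing exactly at $L=\underline{L}$) forces $\ell=-\max(r_1,r_3)$; alternatively one simply notes $G(\underline{L},-\max(r_1,r_3))=0$ by the very definition of $\underline{L}$ via $\operatorname{arccot}$, so continuity of $G$ and monotonicity pin down the limit. For the limit as $L\to+\infty$: as $L$ grows, $\overline\lambda=\frac{\pi^2}{L^2}-r_2\to-r_2$ (for $L$ large), and $\lambda_1(L)\in(-r_2,\overline\lambda)$ is squeezed; hence $\lambda_1(L)\to-r_2$. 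One should double-check consistency, e.g. by observing that $L\sqrt{r_2+\lambda_1(L)}\to\pi$ so that $\cot(L\sqrt{r_2+\lambda_1(L)})\to-\infty$, matching the blow-up of the right-hand side of $G=0$ as $\lambda\to-r_2^+$.

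Finally, the particular value: set $\lambda=-\frac{r_2^2-r_1 r_3}{2r_2-r_1-r_3}$. A direct computation shows $r_2+\lambda=\frac{(r_2-r_1)(r_2-r_3)}{2r_2-r_1-r_3}$, $-r_1-\lambda=\frac{(r_2-r_1)(r_1-r_3)}{\,\cdot\,}$ type expressions (with the right signs), and in fact $r_2+\lambda-\sqrt{(r_1+\lambda)(r_3+\lambda)}$ simplifies so that the right-hand side of the equation defining $\lambda_1$ collapses to $0$; since $\cot$ vanishes exactly at $\pi/2$, this forces $L\sqrt{r_2+\lambda}=\pi/2$, i.e. $L=\frac{\pi}{2}\sqrt{\frac{2r_2-r_1-r_3}{(r_2-r_1)(r_2-r_3)}}$, which is the claimed pairing. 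The main obstacle is purely bookkeeping: verifying the sign conventions in the square roots when $r_1>r_3$ versus $r_1<r_3$, and checking that this $\lambda$ indeed lies in the admissible interval $(-r_2,\overline\lambda)$ for the corresponding $L$; these are routine but require care with cases. The monotonicity in $L$, by contrast, is essentially immediate from the structure of $G$ once (i) and (ii) are in hand.
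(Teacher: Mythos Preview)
Your proposal is correct and follows essentially the same route as the paper: both arguments exploit the implicit equation $G(L,\lambda)=0$, use the monotonicity of $G$ in each variable (the paper phrases this as ``chain rule'' giving $\partial\lambda_1/\partial L<0$), obtain the limit $L\to+\infty$ from the constraint $L\sqrt{r_2+\lambda_1}<\pi$, and identify the particular value by finding where the right-hand side of the equation vanishes so that $\cot$ sits at $\pi/2$. One minor slip: your sketched expression ``$-r_1-\lambda=\frac{(r_2-r_1)(r_1-r_3)}{\,\cdot\,}$'' is off (the correct value is $\frac{(r_2-r_1)^2}{2r_2-r_1-r_3}$), but your conclusion that the numerator $r_2+\lambda-\sqrt{(r_1+\lambda)(r_3+\lambda)}$ vanishes is right, so the argument goes through.
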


\begin{proof}
The monotonicity of $L\in(\underline{L},+\infty)\mapsto\lambda_1(L)$ follows directly from the chain rule applied 
to the equation satisfied by $\lambda_1(L)$; in fact, $\frac{\partial\lambda_1}{\partial L}<0$.

The limit as $L\to+\infty$ is an immediate consequence of $L\sqrt{r_2+\lambda_1}<\pi$. The limit as $L\to\underline{L}^+$ follows similarly, separating the case $r_1=r_3$ and the case $r_1\neq r_3$. 
The continuity at $L=\underline{L}$ when $\underline{L}>0$ follows.

The function 
\begin{equation}
\lambda\mapsto\frac{r_2+\lambda-\sqrt{(r_1+\lambda)(r_3+\lambda)}}{\sqrt{r_2+\lambda}(\sqrt{-r_1-\lambda}+\sqrt{-r_3-\lambda})}
\end{equation}
is increasing, continuous and maps $(-r_2,-\max(r_1,r_3))$ onto $(-\infty,+\infty)$ if $r_1=r_3$ or
onto $\left(-\infty, \sqrt{\frac{r_2-\max(r_1,r_3)}{|r_1-r_3|}}\right)$ if $r_1\neq r_3$. Denote in both
cases $\overline{\zeta} \in (0,+\infty) \cup \{ + \infty\}$ the upper limit of this image interval.
Consequently, for any $\zeta\in(-\infty,\overline{\zeta})$, there exists
a unique preimage $\lambda_1\in(-r_2,-\max(r_1,r_3))$ such that 
\begin{equation}
\frac{r_2+\lambda_1-\sqrt{(r_1+\lambda_1)(r_3+\lambda_1)}}{\sqrt{r_2+\lambda_1}(\sqrt{-r_1-\lambda_1}+\sqrt{-r_3-\lambda_1})}=\zeta,
\end{equation}
and subsequently there exists a unique $L_0\in(\underline{L},+\infty)$ such that 
$\lambda_1=\lambda_1(L_0)$, with~$L_0$ given by the following formula:
\begin{equation}
L_0=\frac{1}{\sqrt{r_2+\lambda_1}}\operatorname{arccot}(\zeta).
\end{equation}

Now we are in a position to pick admissible values of $\zeta$ that correspond to remarkable values of the
$\cot$ function. For instance, $\zeta=0$ leads to:
\begin{equation}
\begin{cases}
L_0\sqrt{r_2+\lambda_1(L_0)}=\frac{\pi}{2}, \\
r_2+\lambda_1(L_0)=\sqrt{(r_1+\lambda_1(L_0))(r_3+\lambda_1(L_0))}.
\end{cases}
\end{equation}
After elementary manipulations, we deduce:
\begin{equation}
\begin{cases}
\displaystyle\lambda_1(L_0)=-\frac{r_2^2-r_1r_3}{2r_2-r_1-r_3}, \\
\displaystyle L_0=\frac{\pi}{2}\sqrt{\frac{2r_2-r_1-r_3}{(r_2-r_1)(r_2-r_3)}},
\end{cases}
\end{equation}
which completes the proof.
\end{proof}

Next, the function $\underline{L}$ is either $0$ or a decreasing continuous function of $r_2$. In the latter case, it also satisfies that $\underline{L} \to 0$ as $r_2 \to +\infty$. Thus, given a fixed value of~$L$, there exists
a threshold $\underline{r_2}\geq\max(r_1,r_3)$ such that $L > \underline{L}$ for any $r_2>\underline{r_2}$, and $L < \underline{L}$ for any $r_2 \in (\max (r_1,r_3), \underline{r}_2)$ (this latter interval possibly being empty). 
Thus, in the latter case, $\lambda_1 = - \max (r_1,r_3)$ as mentioned above. 
Quite similarly to the previous proposition, we deduce the following result; for the sake of brevity, we omit the proof.

\begin{prop}\label{prop:lambda1_r2}
The map $r_2\in(\max(r_1,r_3),+\infty)\mapsto\lambda_1(r_2)$ is continuous, constant in $(\max(r_1,r_3),\underline{r_2}]$
(this interval might be empty), decreasing in $(\underline{r_2},+\infty)$, with the following asymptotic or particular values:
\begin{itemize}
\item $\lambda_1(r_2)\to-\max(r_1,r_3)$ as $r_2\to\max (r_1,r_3)$; 
\item $\lambda_1(r_2)\to-\infty$ as $r_2\to+\infty$.
\end{itemize}
\end{prop}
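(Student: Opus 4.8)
The plan is to transcribe the proof of Proposition~\ref{prop:lambda1_L0}, now fixing $r_1,r_3,L$ and letting $r_2$ vary over $(\max(r_1,r_3),+\infty)$, leaning on the dichotomy of Lemma~\ref{lem:eigen} together with the concavity and monotonicity of $\lambda_1$ recalled above from \cite{Berestycki_Ros}. As observed just before the statement, $r_2\mapsto\underline{L}(r_2)$ is either identically $0$ or continuous and strictly decreasing with limit $0$ at $+\infty$; hence there is a threshold $\underline{r_2}\geq\max(r_1,r_3)$ with $L<\underline{L}(r_2)$ on $(\max(r_1,r_3),\underline{r_2})$ and $L>\underline{L}(r_2)$ on $(\underline{r_2},+\infty)$. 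By Lemma~\ref{lem:eigen}, $\lambda_1(r_2)=-\max(r_1,r_3)$ on $(\max(r_1,r_3),\underline{r_2}]$ (constant there), while on $(\underline{r_2},+\infty)$ item~(1) of Section~\ref{sec:important_prop} gives the strict bound $\lambda_1(r_2)\in(-r_2,-\max(r_1,r_3))$ and item~(4) gives smoothness, via the implicit function theorem applied to~\eqref{eq:equation_defining_lambda1}.

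Continuity, in particular at the junction $r_2=\underline{r_2}$, I would obtain for free: a concave function on an open interval is continuous, and $r_2\mapsto\lambda_1(r_2)$ is concave by \cite{Berestycki_Ros}. The two asymptotic values are then easy. As $r_2\downarrow\max(r_1,r_3)$ one has $\lambda_1(r_2)\to-\max(r_1,r_3)$, either trivially ($\lambda_1\equiv-\max(r_1,r_3)$ near the left endpoint when $\underline{r_2}>\max(r_1,r_3)$) or by the squeeze $-r_2\leq\lambda_1(r_2)\leq-\max(r_1,r_3)$ from~\eqref{ineq:lambda1} when $\underline{r_2}=\max(r_1,r_3)$. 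As $r_2\to+\infty$ one has $L>\underline{L}(r_2)$, so by Lemma~\ref{lem:eigen} $\lambda_1(r_2)<\min\left(-\max(r_1,r_3),\frac{\pi^2}{L^2}-r_2\right)\leq\frac{\pi^2}{L^2}-r_2\to-\infty$, exactly as the $L\to+\infty$ limit is handled in Proposition~\ref{prop:lambda1_L0}.

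It remains to prove strict decrease on $(\underline{r_2},+\infty)$, which I would do by a concavity argument rather than by implicitly differentiating~\eqref{eq:equation_defining_lambda1}. Note first that $\lim_{r_2\downarrow\underline{r_2}}\lambda_1(r_2)=-\max(r_1,r_3)$ (by continuity when $\underline{r_2}>\max(r_1,r_3)$, by the squeeze above when $\underline{r_2}=\max(r_1,r_3)$), whereas $\lambda_1(r_2)<-\max(r_1,r_3)$ strictly for $r_2>\underline{r_2}$. Suppose $\lambda_1(a)=\lambda_1(b)$ for some $\underline{r_2}<a<b$. By concavity, for $x\in(\underline{r_2},a)$ the chord slope $\frac{\lambda_1(a)-\lambda_1(x)}{a-x}$ is at least the chord slope $\frac{\lambda_1(b)-\lambda_1(a)}{b-a}=0$; letting $x\downarrow\underline{r_2}$, the left-hand side converges to $\frac{\lambda_1(a)+\max(r_1,r_3)}{a-\underline{r_2}}<0$, a contradiction. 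Hence $\lambda_1$ is injective on $(\underline{r_2},+\infty)$, and being nonincreasing it is strictly decreasing there.

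The only genuinely delicate point is this last strict-monotonicity statement at the left endpoint $r_2=\underline{r_2}$; the concavity argument above dispatches it cleanly and avoids the lengthy implicit differentiation of~\eqref{eq:equation_defining_lambda1} that would be the alternative (mirroring the treatment of the $L$-variable in Proposition~\ref{prop:lambda1_L0}). Everything else is a direct transcription of the $L$-case, which is why the paper leaves the details to the reader.
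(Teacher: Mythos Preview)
Your proof is correct. The paper actually omits its own proof entirely, saying only that it proceeds ``quite similarly'' to Proposition~\ref{prop:lambda1_L0}; the implied argument would therefore establish strict decrease by implicitly differentiating~\eqref{eq:equation_defining_lambda1} with respect to~$r_2$ (mirroring the chain-rule computation $\partial\lambda_1/\partial L<0$ carried out there), and obtain continuity at $\underline{r_2}$ by matching the left limit with the constant value $-\max(r_1,r_3)$.

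Your route differs in two places and is arguably cleaner. First, you get global continuity for free from concavity, rather than checking the junction value separately. Second, and more substantively, you replace the implicit differentiation of~\eqref{eq:equation_defining_lambda1} by a short concavity argument: combining the strict inequality $\lambda_1(r_2)<-\max(r_1,r_3)$ on $(\underline{r_2},+\infty)$ with the endpoint limit $-\max(r_1,r_3)$ and the three-chord inequality rules out any plateau, and then the nonincreasing property from \cite{Berestycki_Ros} upgrades injectivity to strict decrease. This avoids the ``lengthy computation'' that the proof of Proposition~\ref{prop:lambda1_L0} alludes to but omits, at the modest cost of invoking the abstract concavity and monotonicity from \cite{Berestycki_Ros} rather than deriving everything from the explicit equation. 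The asymptotics at both ends you handle exactly as the paper would, via the squeeze~\eqref{ineq:lambda1} and the bound $\lambda_1<\pi^2/L^2-r_2$.
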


Finally, we point out that the eigenvalue $\lambda_1$ is symmetric with respect to the parameters $r_1$ and $r_3$.
\begin{prop}[Symmetry] 
$\lambda_1(r_1,r_3)=\lambda_1(r_3,r_1)$.
\end{prop}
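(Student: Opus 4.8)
The plan is to use the reflection $y\mapsto 1-y$ about the midpoint of the patch. Swapping $r_1$ and $r_3$ in the potential $m$ defined by~\eqref{m} amounts, up to a set of Lebesgue measure zero which is harmless since \eqref{eq:eigenproblem1} is understood weakly in $W^{2,1}_{\text{loc}}(\R)$, to replacing $m$ by $\check m:y\mapsto m(1-y)$. Indeed, the reflection $y\mapsto 1-y$ exchanges the half-lines $\{y<0\}$ and $\{y>1\}$, fixes the slab $(0,1)$ as a set, and permutes the two interfaces $\{0,1\}$; thus $\check m=r_3$ on $\{y<0\}$, $\check m=r_2$ on $(0,1)$, and $\check m=r_1$ on $\{y>1\}$, which is exactly the potential of \eqref{eq:eigenproblem1} associated with parameters $(r_3,r_2,r_1)$.

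Next I would observe that the operator $L^{-2}\partial_{yy}$ is invariant under $y\mapsto 1-y$. Hence, writing $\check\varphi(y):=\varphi(1-y)$ and $\mathcal L_m=L^{-2}\partial_{yy}+m$, $\mathcal L_{\check m}=L^{-2}\partial_{yy}+\check m$, one has the pointwise identity $\bigl((\mathcal L_{\check m}+\lambda)\check\varphi\bigr)(y)=\bigl((\mathcal L_{m}+\lambda)\varphi\bigr)(1-y)$ for every $\lambda\in\R$. Moreover the map $\varphi\mapsto\check\varphi$ is an involution preserving positivity as well as the spaces $W^{2,1}_{\text{loc}}(\R)$ and $L^\infty(\R)$. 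Consequently it maps bijectively the set of admissible test functions $\{\varphi>0:\ (\mathcal L_m+\lambda)\varphi\le 0\}$ onto $\{\psi>0:\ (\mathcal L_{\check m}+\lambda)\psi\le 0\}$.

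I would then conclude from the supremum characterization of the generalized principal eigenvalue recalled in Proposition~\ref{prop:eigen} that $\mathcal L_m$ and $\mathcal L_{\check m}$ share the same generalized principal eigenvalue; since $\mathcal L_m$ corresponds to parameters $(r_1,r_2,r_3)$ and $\mathcal L_{\check m}$ to parameters $(r_3,r_2,r_1)$, this is precisely $\lambda_1(r_1,r_3)=\lambda_1(r_3,r_1)$. There is essentially no obstacle here: the only point deserving a line of care is the reduction modulo null sets in the first step. Alternatively, and even more directly, one may just note that every expression in Lemma~\ref{lem:eigen} is manifestly invariant under the exchange $r_1\leftrightarrow r_3$ --- the threshold $\underline L$ depends on $r_1,r_3$ only through $\max(r_1,r_3)$ and $|r_1-r_3|$, the value $-\max(r_1,r_3)$ in the regime $L\le\underline L$ is symmetric, and the right-hand side of~\eqref{eq:equation_defining_lambda1} is symmetric since it involves $r_1,r_3$ only through $\sqrt{(r_1+\lambda)(r_3+\lambda)}$ and $\sqrt{-r_1-\lambda}+\sqrt{-r_3-\lambda}$ --- so the statement also follows at once from the characterization of $\lambda_1$ established there.
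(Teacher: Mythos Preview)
Your proof is correct and follows the same approach as the paper: the key idea is the reflection $y\mapsto 1-y$, which exchanges the roles of $r_1$ and $r_3$ in the potential $m$ while leaving the second-order operator invariant. You have fleshed out the details (the null-set issue at the interfaces, the explicit use of the supremum characterization from Proposition~\ref{prop:eigen}) and added a pleasant alternative via the explicit formulas of Lemma~\ref{lem:eigen}, whereas the paper states only that the change of variable $y\mapsto 1-y$ makes the symmetry ``obvious''.
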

\begin{proof}
By the change of the variable $y\mapsto 1-y$ in \eqref{eq:eigenproblem1}, the role of $r_1$ and $r_3$ are exchanged, therefore the above symmetry is obvious.
\end{proof}

\subsection{More general heterogeneities and optimization issues}\label{sec:moregeneral}

In view of our method of proof, it should be clear that the main result (the piece-by-piece formula 
for the spreading speed in Theorem \ref{thm:constant_case}) will extend to many equations of the form 
\begin{equation}
    \partial_t u -\partial_{xx}u=f(t,x,u)
\end{equation}
with $\partial_u f(t,x,0)=m(x-c_At)$, $m\in L^\infty(\R)$, $\inf m>0$. More precisely, since
the existence of $\lambda_1$ and $\varphi_1$ is given by \cite{Berestycki_Ros}, 
the following properties are the only true requirements to end the construction of super- and sub-solutions in the proof
of Theorem \ref{thm:constant_case}:
\begin{enumerate}
    \item $m=r_3$ in a neighborhood of $+\infty$ -- this is used to construct the explicit super-solution in Step 1;
    \item $m=r_1$ in a neighborhood of $-\infty$ -- this is used to construct the explicit super-solution in Step 2;
    \item $\limsup \frac{\varphi_1'}{\varphi_1} \leq \sqrt{-r_1-\lambda_1}$ at $-\infty$ -- this is used to validate the angle condition in Step 2;
    \item $\upe^{-\frac{c_A}{2}x}\varphi_1(x)\to 0$ at $+\infty$ -- this is used to ensure that the super-solution of Step 2
    acts indeed as a barrier for the solution.
\end{enumerate}
It turns out that the third and fourth ones are direct consequences of the first and second. Indeed, recall from Proposition~\ref{prop:eigen} (see also again~\cite{Berestycki_Ros}) that $\varphi_1$ is the limit of the eigenfunctions of a truncated Dirichlet problem. Using this together with the maximum principle and $\lambda_1 \leq -\max (r_1,r_3)$, one may check that $\varphi_1$ is either affine or identical to
$$e^{\sqrt{-r_1 - \lambda_1}x}$$
on a left-half line depending on whether $\lambda_1 = -r_1$ or $\lambda_1 < -r_1$. In the former case, $\varphi_1 '$ must be nonnegative and in the latter, $\varphi_1 '$ is precisely equal to $\sqrt{-r_1 -\lambda_1} \varphi_1$, that is, (3) holds true. The same argument shows that $\varphi_1$ grows at most linearly at~$+\infty$. 

Therefore the exact variations of $m$ between the left half-line where $m=r_1$ and the right half-line where $m=r_3$
do not really matter, and we are able to handle arbitrary bounded variations\footnote{We actually conjecture
that $m\to r_1$ at $-\infty$ and $m\to r_3$ at $+\infty$ are sufficient for our purposes. This, however, requires more
work.}.

Of course, such extensions are made at the expense of the formula \eqref{eq:equation_defining_lambda1} that characterizes $\lambda_1$. 
But they make it possible to study, for instance, smooth growth rates or piecewise-constant growth rates with more than
one traveling patch. By doing so, we obtain a nice connection with classical shape optimization results. For instance,
if $r_1=r_3$, $m\geq r_1$, and $L^\infty$ and $L^1$ constraints are imposed on $m-r_1$, then the function~$m$ which minimizes the eigenvalue $\lambda_1$ (and in turn maximizes the spreading speed) is bang-bang and contains precisely only one patch (\textit{i.e.}, $m-r_1$ is the indicator function of a bounded interval). In other words, the situation we studied in the present paper corresponds to this optimal situation.

\subsection{Comparison with related climate change models}

\revision{If we let $r_1,r_3\to 0$ in our model, the only favorable zone is the interval $[c_A t,c_A t +L]$, and there is no growth outside this moving interval. 
This situation has much similarity with the so-called climate change models. In this subsection, we make a brief comparison with these models.}

\revision{In the so-called climate change model studied in \cite{Berestycki_Die}, the growth rates outside the favorite zone are negative, which corresponds to $r_1,r_3<0$ in our notation. If we let $r_1,r_3\to 0$ in the climate change model, we formally obtain the same limit equation as our model. Let us compare these two limit models.}

\revision{In our model, the limit $r_1=r_3\to 0$ with fixed $r_2>0$ and $L>0$ gives formally the following formula for the spreading speed:
\begin{equation*}
\underline{c}=\overline{c}=
\begin{cases}
c_A & \text{if }0\leq c_A < 2\sqrt{-\lambda_1}, \\
0 & \text{if } 2\sqrt{-\lambda_1} < c_A,
\end{cases}
\end{equation*}
where we have excluded the critical case~$2\sqrt{-\lambda_1} = c_A$ because of the discontinuity, and $\lambda_1\in(-r_2,0)$ is formally the solution in 
$\left(-r_2,\min\left(0,\frac{\pi^2}{L^2}-r_2\right)\right)$ of
\begin{equation*}
\cot(L\sqrt{r_2+\lambda_1})=\frac{r_2+2\lambda_1}{2\sqrt{r_2+\lambda_1}\sqrt{-\lambda_1}}.
\end{equation*}
Noticing the scale invariance and setting $r_2^L=L^2 r_2$ and $\lambda = \frac{\lambda_1}{r_2}$, 
we are seeking the solution $\lambda$ in $\left(-1,\min\left(0,\frac{\pi^2}{r_2^L}-1\right)\right)$ of 
\begin{equation*}
    \begin{split}
	\cot(\sqrt{r_2^L(1+\lambda)}) & = \frac{1+2\lambda}{2\sqrt{1+\lambda}\sqrt{-\lambda}} \\
	& = \frac{\sqrt{1+\lambda}^2-\sqrt{-\lambda}^2}{2\sqrt{-\lambda}\sqrt{1+\lambda}} \\
	& = \frac{\sqrt{1+\lambda}}{2\sqrt{-\lambda}}-\frac{\sqrt{-\lambda}}{2\sqrt{1+\lambda}} \\
	& = \frac12\sqrt{\frac{1+\lambda}{-\lambda}}-\frac12\sqrt{\frac{-\lambda}{1+\lambda}}.
    \end{split}
\end{equation*}
It appears that the right-hand side remains an increasing function of $\lambda$, whence the solution $\lambda_1$ is indeed
uniquely defined. It is negative, whence the threshold $2\sqrt{-\lambda_1}$ is positive, and the interval $[0,2\sqrt{-\lambda_1})$
is nonempty.}

\revision{Thus, formally, the solution should take the form of a spreading front with positive speed if and only if $c_A\in(0,2\sqrt{-\lambda_1})$, and in such a case $\underline{c}=\overline{c}=c_A$: the spreading front is locked. 
In particular, when $c_A$ is fixed, there exists a minimal value $\lambda_1=-\frac{c_A^2}{4}$ for 
the locked front. In terms of $L$, this means that the minimal critical length of $L$ for the front to be locked is given by 
\begin{equation*}
    L_{\textup{crit}}=\frac{1}{\sqrt{r_2-\frac{c_A^2}{4}}}\operatorname{arccot}\left( \frac{r_2-\frac{c_A^2}{2}}{c_A\sqrt{r_2-\frac{c_A^2}{4}}} \right).
\end{equation*}
}

\revision{This is consistent with the conclusions of \cite{Berestycki_Die}, which deals with the case $r_1=r_3<0$. There the critical value of $L$ for the persistence and spreading of a front at locked speed is given by \cite[Formula (23)]{Berestycki_Die}. Formally, the limit of this critical length, as $r_1=r_3\to 0$, is precisely the value $L_{\textup{crit}}$ above.
Thus, despite the fact that the steady state $0$ changes stability as $r_1=r_3$ changes sign, there is some continuity between the two regimes $r_1=r_3\to 0^+$ and $r_1=r_3\to 0^-$.
}

\revision{Next, if we consider the equation \eqref{eq:main} on the moving domain $[c_A t,c_A t+L]$ under the homogeneous Dirichlet boundary conditions, we are \secondrevision{led} to the following principal eigenvalue problem for the population growth:\secondrevision{
\begin{equation*}
    \begin{cases}
	-\varphi_1''-\left(r_2-\frac{c_A^2}{4}\right)\varphi_1=\lambda_1\varphi_1 & \text{in }(0,L), \\
	\varphi_1>0 & \text{in }(0,L), \\
	\varphi_1(0)=\varphi_1(L)=0. &
    \end{cases}
\end{equation*}
}
This ends up giving another (larger) critical length \secondrevision{$\widetilde{L}_{\textup{crit}}=\pi/\sqrt{r_2-\frac{c_A^2}{4}}$} for the persistence and spreading of a front at the speed $c_A$.}

\revision{This critical length coincides with the limit $r_1=r_3\to-\infty$ of the critical length in the 
model from \cite{Berestycki_Die} (see \cite[Formula (26)]{Berestycki_Die}). 
The fact that $\widetilde{L}_{crit}>L_{crit}$ illustrates that our model, even in the limit $r_1=r_3\to 0$, is much more favorable for population growth than the Dirichlet bounded domain model.} \secondrevision{We also refer to \cite{Allwright_2021}, where this problem is studied in a more general framework from which the above formula for $\widetilde{L}_{\textup{crit}}$ is recovered as a special case.}

\revision{Our model could also be compared with a Neumann bounded domain model. In this last case,
the associated principal eigenvalue problem is:
\begin{equation*}
    \begin{cases}
	-\varphi_1''-c_A\varphi_1'-r_2\varphi_1=\lambda_1\varphi_1 & \text{in }(0,L), \\
	\varphi_1>0 & \text{in }(0,L), \\
	\varphi_1'(0)=\varphi_1'(L)=0. &
    \end{cases}
\end{equation*}
Of course, here, $\lambda_1=-r_2$ does not depend on $c_A$ or $L$. There is no critical length, that is, population growth (hence the persistence of the front) occurs regardless of the value of $L$. 
This model is much more favorable for population growth than all other models considered above.}

\section*{Acknowledgment}

This work was initiated during a visit of T.G. and L.G. at the Meiji Institute for Advanced Studies of Mathematical Sciences (MIMS).
H.M. is supported by KAKENHI 21H00995. T.G. and L.G. acknowledge support from the ANR via the project Indyana under grant agreement ANR-21-CE40-0008 and via the project Reach under grant agreement ANR-23-CE40-0023-01. The three authors also acknowledge support from the CNRS via IRN ReaDiNet.

\revision{The authors thank the anonymous referee for many valuable comments which led to a substantial improvement of the original manuscript.}

\bibliographystyle{plain}
\bibliography{ref.bib}

\begin{thebibliography}{10}

\bibitem{Alfaro_Beresty}
Matthieu Alfaro, Henri Berestycki, and Ga\"el Raoul.
\newblock The effect of climate shift on a species submitted to dispersion,
  evolution, growth, and nonlocal competition.
\newblock {\em SIAM J. Math. Anal.}, 49(1):562--596, 2017.

\bibitem{Allwright_2021}
Jane Allwright.
\newblock Reaction-diffusion on a time-dependent interval: refining the notion
  of `critical length'.
\newblock {\em Commun. Contemp. Math.}, 25(9):Paper No. 2250050, 11, 2023.

\bibitem{Aronson_Weinbe_1}
Donald~G. Aronson and Hans~F. Weinberger.
\newblock Multidimensional nonlinear diffusion arising in population genetics.
\newblock {\em Adv. in Math.}, 30(1):33--76, 1978.

\bibitem{Berestycki_Die}
Henri Berestycki, Odo Diekmann, C.~J. Nagelkerke, and P.~A. Zegeling.
\newblock Can a species keep pace with a shifting climate?
\newblock {\em Bulletin of mathematical biology}, 71(2):399--429, 2009.

\bibitem{Berestycki_Fang_2018}
Henri Berestycki and Jian Fang.
\newblock Forced waves of the {F}isher-{KPP} equation in a shifting
  environment.
\newblock {\em J. Differential Equations}, 264(3):2157--2183, 2018.

\bibitem{Berestycki_Nadin_2012}
Henri Berestycki and Gr\'{e}goire Nadin.
\newblock Spreading speeds for one-dimensional monostable reaction-diffusion
  equations.
\newblock {\em J. Math. Phys.}, 53(11):115619, 23, 2012.

\bibitem{Berestycki_Nadin_2022}
Henri Berestycki and Gr{\'e}goire Nadin.
\newblock Asymptotic spreading for general heterogeneous {Fisher-KPP} type
  equations.
\newblock {\em {Memoirs of the American Mathematical Society}}, 2019.

\bibitem{Berestycki_Ros}
Henri Berestycki and Luca Rossi.
\newblock Generalizations and properties of the principal eigenvalue of
  elliptic operators in unbounded domains.
\newblock {\em Comm. Pure Appl. Math.}, 68(6):1014--1065, 2015.

\bibitem{Bouhours_Gilet}
Juliette Bouhours and Thomas Giletti.
\newblock Spreading and vanishing for a monostable reaction-diffusion equation
  with forced speed.
\newblock {\em J. Dynam. Differential Equations}, 31(1):247--286, 2019.

\bibitem{Bovier_Hartung_2022}
Anton Bovier and Lisa Hartung.
\newblock The speed of invasion in an advancing population.
\newblock {\em J. Math. Biol.}, 87(4):Paper No. 56, 32, 2023.

\bibitem{Carrere_2017}
C{\'{e}}cile Carr{\`{e}}re.
\newblock Spreading speeds for a two-species competition-diffusion system.
\newblock {\em J. Differential Equations}, 264(3):2133--2156, 2018.

\bibitem{Ducrot_Giletti_Guo_Shimojo_2021}
Arnaud Ducrot, Thomas Giletti, Jong-Shenq Guo, and Masahiko Shimojo.
\newblock Asymptotic spreading speeds for a predator-prey system with two
  predators and one prey.
\newblock {\em Nonlinearity}, 34(2):669--704, 2021.

\bibitem{Ducrot_Giletti_Matano_2}
Arnaud Ducrot, Thomas Giletti, and Hiroshi Matano.
\newblock Spreading speeds for multidimensional reaction-diffusion systems of
  the prey-predator type.
\newblock {\em Calc. Var. Partial Differential Equations}, 58(4):Art. 137, 34,
  2019.

\bibitem{Ei_Ikeda_Ogawa_2023}
Shin-Ichiro Ei, Hideo Ikeda, and Toshiyuki Ogawa.
\newblock Alien invasion into the buffer zone between two competing species.
\newblock {\em Discrete Contin. Dyn. Syst. Ser. B}, 28(12):6034--6063, 2023.

\bibitem{Garnier_Giletti_Hamel_Roques}
Jimmy Garnier, Thomas Giletti, Fran{\c{c}}ois Hamel, and Lionel Roques.
\newblock Inside dynamics of pulled and pushed fronts.
\newblock {\em Journal de Math{\'{e}}matiques Pures et Appliqu{\'{e}}es},
  98(4):428--449, 2012.

\bibitem{Garnier_Giletti_Nadin_2012}
Jimmy Garnier, Thomas Giletti, and Gregoire Nadin.
\newblock Maximal and minimal spreading speeds for reaction diffusion equations
  in nonperiodic slowly varying media.
\newblock {\em J. Dynam. Differential Equations}, 24(3):521--538, 2012.

\bibitem{Girardin_Lam}
L\'{e}o Girardin and King-Yeung Lam.
\newblock Invasion of open space by two competitors: spreading properties of
  monostable two-species competition-diffusion systems.
\newblock {\em Proceedings of the London Mathematical Society},
  119(5):1279--1335, 2019.

\bibitem{Hamel_Nadin_2012}
Fran{\c{c}}ois Hamel and Gr{\'{e}}goire Nadin.
\newblock Spreading properties and complex dynamics for monostable
  reaction{--}diffusion equations.
\newblock {\em Communications in Partial Differential Equations}, 37:511--537,
  2012.

\bibitem{Holzer_Scheel}
Matt Holzer and Arnd Scheel.
\newblock Accelerated fronts in a two-stage invasion process.
\newblock {\em SIAM J. Math. Anal.}, 46(1):397--427, 2014.

\bibitem{Lam_Yu_Nadin_2025}
King-Yeung Lam, Gr{\'e}goire Nadin, and Xiao Yu.
\newblock Asymptotic spreading of kpp reactive fronts in heterogeneous shifting
  environments ii: flux-limited solutions.
\newblock {\em Mathematische Annalen}, 392(4):5429--5481, Aug 2025.

\bibitem{Lam_Yu_2021}
King-Yeung Lam and Xiao Yu.
\newblock Asymptotic spreading of {KPP} reactive fronts in heterogeneous
  shifting environments.
\newblock {\em J. Math. Pures Appl. (9)}, 167:1--47, 2022.

\bibitem{Lam_Liu_Liu_2019}
Qian Liu, Shuang Liu, and King-Yeung Lam.
\newblock Stacked invasion waves in a competition-diffusion model with three
  species.
\newblock {\em J. Differential Equations}, 271:665--718, 2021.

\bibitem{Venegas_Ortiz_}
Juan Venegas-Ortiz, Rosalind~J. Allen, and Martin~R. Evans.
\newblock Speed of invasion of an expanding population by a horizontally
  transmitted trait.
\newblock {\em Genetics}, 196(2):497--507, 2014.

\end{thebibliography}

\end{document}